\newtheorem{Theorem}{Theorem}[section]
\newtheorem{Proposition}[Theorem]{Proposition}
\newtheorem{Lemma}[Theorem]{Lemma}
\newtheorem{Corollary}[Theorem]{Corollary}
\theoremstyle{definition}
\theoremstyle{remark}
\newtheorem{remark}{Remark}
\numberwithin{equation}{section}
\newcommand{\R}{{\mathbb R}}
\newcommand{\C}{{\mathbb C}}
\newcommand{\N}{{\mathbb N}}
\newcommand{\tr}{{\textrm{\rm tr}\:}}
\renewcommand{\Im}{{\textrm{\rm Im}\:}}
\begin{document}

\title{Canonical systems and quantum graphs}

\author{Kyle Scarbrough}

\address{Department of Mathematics\\ Faculty of Nuclear Sciences and Physical Engineering \\ Czech Technical University in Prague\\ Trojanova 13\\ 120 00 Prague\\ Czech Republic}
\email{kyle.d.scarbrough-1@ou.edu}

\date{\today}

\thanks{2020 {\it Mathematics Subject Classification.} 34L05 34L40 47A06 81Q35}

\keywords{canonical system, quantum graph}

\begin{abstract}
The representation of the resolvent as an integral operator, the $m$ function, and the associated spectral representation are fundamental topics in the spectral theory of self-adjoint ordinary differential operators. Versions of these are developed here for canonical systems $Ju'=-zHu$ of arbitrary order. Next, canonical systems on graphs, not necessarily compact but with finitely many vertices, are introduced and proved to be unitarily equivalent to certain higher order canonical systems. It is shown that any Schr\"odinger operator on a graph is unitarily equivalent to a canonical system on the same graph. Consequently, for an arbitrary canonical system or Schr\"odinger operator on a graph, a representation of the resolvent as an integral operator and a spectral representation are obtained.         
\end{abstract}
\maketitle

\section{Introduction}
A \textit{canonical system} is a differential equation of the form
\begin{equation}
\label{can}
Ju'(x) = -zH(x)u(x) , \quad J=\begin{pmatrix} 0 & -I \\ I & 0 \end{pmatrix} ,
\end{equation}
with $H(x)\in\C^{2n\times 2n}$ and $H(x)\ge 0$. These are considered here on intervals $(a,b)$ that can be bounded or unbounded. See Section 2 for more precise assumptions and definitions.  

Canonical systems of order two, meaning $n=1$, are particularly well-researched \cite{BHS}, \cite{Rembook}. Take such a system, and suppose, for example, that $(a,b)=(0,\infty)$, $H(x)\in\R^{2\times 2}$, $\tr H(x)=1$, and the boundary condition $u_2(0)=0$ for solutions of \eqref{can} is imposed. This canonical system, more precisely the relation generated by the differential equation together with the boundary condition, is then self-adjoint. It is known that the resolvent can be written as a certain integral involving the Green function. The Green function is built out of two solutions of \eqref{can}, one satisfying the boundary condition at $0$ and one in the underlying Hilbert space. The initial value of the latter solution gives rise to the Weyl $m$ function. The $m$ function is a scalar Herglotz function, an analytic function from the upper half plane $\C^+$ to $\overline{\C^+}$, and such functions have the representation
\[
m(z) = A + Bz + \int_{-\infty}^{\infty} \left( \frac{1}{t-z} - \frac{t}{t^2+1} \right)\, d\rho(t) ,
\]
with $A\in\R \cup \{\infty\}$, $B\ge 0$, and $\rho$ a Borel measure on $\R$ with $\int\frac{d\rho(t)}{1+t^2}<\infty$. The canonical system has a spectral representation $U$ in $L^2(\rho)$ given by the formula $(Uf)(t)= \lim_{L\to \infty}\int_{0}^L u(x,t)^*H(x)f(x) \, dx$, where $u(x,t)$ is the solution of $Ju'=-tHu$ with $u(0,t)=\begin{pmatrix}1 \\ 0 \end{pmatrix}$. Moreover, such canonical systems are in one-to-one correspondence with arbitrary Herglotz functions $\C^+ \to \overline{\C^+}$. See \cite{Rembook} for more details.

Important steps in the basic direct spectral theory, as above, of higher order canonical systems were taken by Hinton, Shaw, and Krall in \cite{HS1}, \cite{HS2}, and \cite{Krall}. Those works make a more restrictive assumption about the canonical system, definiteness on every subinterval of $(a,b)$, than is usual in the general order two case. The above results mentioned, in the case $n=1$, concerning the resolvent and the spectral representation are obtained in this paper for arbitrary $n$ under the milder assumption that the canonical system is definite on $(a,b)$. See Section 2 for the precise notion of definiteness used here.  The methodology behind the proofs here is different as well. Hinton, Shaw, and Krall relied on a Weyl theory of nested disks. The approach in this paper, for the half line case in Section 4, is more closely modeled on that taken in \cite{Teschl} and \cite{WMLN}. The idea is to show that any spectral representation of a canonical system must be a kind of generalized eigenfunction expansion. The generalized eigenfunctions are then written in terms of a fixed basis of solutions satisfying the boundary condition. An explicit spectral representation, involving that fixed basis of solutions and the matrix valued measure associated with the system's $m$ function, is obtained by studying the resolvent and the $m$ function. This methodology apparently is new in the context of canonical systems. 

Other fundamental results, frequently used here, concerning higher order canonical systems are contained in \cite{BHSW} and \cite{Lesch}. A number of results concerning both direct and inverse spectral theory for higher order systems are discussed in \cite{Sakh}.

Quantum graphs are a major area of research in mathematical physics and spectral theory \cite{BCFK}, \cite {BK}, \cite{EKKST}, \cite {Post}. The basic object of study is a graph, a set of differential equations along the edges, and interface conditions at the vertices. The focus in this paper is on canonical systems and Schr\"odinger equations on graphs with finitely many vertices. The edges of the graphs are allowed to be half lines. See Sections 5, 6, and 7 for the precise definitions. 

Canonical systems on graphs are not as widely studied as Schr\"odinger equations, but see \cite{BLT} and \cite{dSW} for some results about canonical systems on star graphs. De Snoo and Winkler in \cite{dSW} obtained a spectral representation of a canonical system on a star graph; a key step in their argument was to study the resolvent using Krein's formula. It is not clear how to extend their method to more general graphs. More basically, it has not even been proved that the relation corresponding to a canonical system on a general graph with seemingly self-adjoint interface conditions is in fact self-adjoint.

The representation of the resolvent as an integral operator, $m$ functions, and spectral representations for Schr\"odinger operators on graphs have appeared in many specialized situations \cite{AP}, \cite{ASVCdL}, \cite{AK}, \cite{BL}, \cite{BER}, \cite{BW}, \cite{CW1}--\cite{CW3}, \cite{KS}, \cite{KN}, \cite{Roh}, \cite{SW}, \cite{Yur}. In \cite{LSS}, spectral representations as generalized eigenfunction expansions are considered in some generality for Schr\"odinger operators on graphs; the spectral measure is taken as an input in their result, the potentials are assumed to be locally $L^2$, and the generalized eigenfunctions are delivered by abstract methods that do not determine the specific initial values of the generalized eigenfunctions. Note that this is in contrast to the above results for canonical systems, where the spectral measure is derived from an $m$ function, the coefficient function in the differential equation is only assumed to be locally $L^1$, and the initial value of the solution providing the spectral representation is fixed.

A natural question is whether the resolvent and a spectral representation of a quantum graph can be given in explicit, universal forms, as is the case for the traditional self-adjoint ordinary differential and difference operators and relations. This question is answered here in two steps. The first step is to prove that the resolvent and a spectral representation of any higher order canonical system, on an interval, can be given in the expected forms, as mentioned above. The second step is to set up an explicit unitary equivalence between an arbitrary quantum graph and a higher order canonical system. By the general inverse spectral theory of canonical systems of order two, arbitrary spectral data in the form of scalar Herglotz functions can be realized by canonical systems. Moreover, well-known ordinary differential and difference equations, such as Schr\"odinger, Dirac, and Jacobi, on intervals can be rewritten explicitly as canonical systems \cite{Rembook}. Thus, the second step in answering the above question also answers another natural question, namely, whether a quantum graph, say with canonical systems or Schr\"odinger equations along the edges, can be rewritten as a canonical system on an interval. 

In Sections 5 and 6, it is shown that a canonical system on a graph with $k$ vertices is unitarily equivalent to a canonical system of order $4k$ on an interval. In particular, self-adjointness, a representation of the resolvent as an integral operator, and a spectral representation are obtained. For a compact graph, the main trick is to insert a vertex into the middle of each edge with Neumann-Kirchhoff interface conditions and to change the variables so that each of the canonical systems on the new edges has the same domain for $x$. This is similar to the well-known trick for turning a whole line problem into a higher order half line problem. It is also closely related to the developments in \cite{CW1}--\cite{CW3} for Sturm-Liouville operators on compact graphs. For the non-compact case, the fact that boundary conditions can be implemented by singular half lines, Theorem \ref{singhalfline}, is essentially the other tool needed. Note that this theorem does not have an analogue for Sturm-Liouville problems, so whether a Sturm-Liouville operator on a non-compact graph can be rewritten as a Sturm-Liouville system on a half line is an open problem.

However, it is proved in Section 7 that a Schr\"odinger operator on a graph is unitarily equivalent to a canonical system on that graph and, hence, to a higher order canonical system on a bounded interval or half line. This provides general forms for the representation of the resolvent as an integral operator and for a spectral representation. 

Here is an overview of the following sections. Basic notation, definitions, and results concerning linear relations and canonical systems are provided in Section 2. Sections 3 and 4 concern $m$ functions, the resolvents, and spectral representations for canonical systems on bounded intervals and half lines, respectively. Canonical systems on compact graphs and their unitary equivalence to higher order canonical systems on bounded intervals are discussed in Section 5. Canonical systems on non-compact graphs are examined in Section 6. In Section 7, Schr\"odinger operators on graphs are reviewed, and a unitary equivalence between an arbitrary Schr\"odinger operator on a graph and a canonical system on the graph is established.

\section{Preliminaries on relations and canonical systems}
Let $\mathcal H_1$ and $\mathcal H_2$  be Hilbert spaces. A \textit{relation} is a linear subspace $\mathcal R \subset \mathcal H_1 \oplus \mathcal H_2$. The \textit{domain} $D(\mathcal R)$ of a relation is defined as the set $\{f: (f,g)\in \mathcal R \textrm{ for some }g\in \mathcal H_2\}$. The \textit{multivalued part} of $\mathcal R$ is the set $\mathcal R(0)=\{g: (0,g)\in \mathcal R\}$. The \textit{inverse} and \textit{adjoint} of $\mathcal R$ are the relations
\[
\mathcal R^{-1}=\{ (g,f): (f,g)\in \mathcal R \}
\]
and
\[
\mathcal R^*=\{ (h,k)\in \mathcal H_2 \oplus \mathcal H_1 : \langle h,g\rangle =\langle k,f\rangle \textrm{ for all }(f,g)\in \mathcal R \} ,
\]
respectively, on $\mathcal H_2 \oplus \mathcal H_1$. If $\mathcal H_1=\mathcal H_2$, then $\mathcal R$ is called \textit{symmetric} if $\mathcal R \subset \mathcal R^*$ and \textit{self-adjoint} if $\mathcal R = \mathcal R^*$. If $\mathcal S \subset \mathcal H_2 \oplus \mathcal H_3$ is another relation, then the product is defined by $\mathcal S \mathcal R = \{(f,h) : (f,g) \in \mathcal R \textrm{ and } (g,h) \in \mathcal S \textrm{ for some } g\in \mathcal H_2 \}$. In particular, after identifying an operator with its graph, this defines the product of an operator and a relation.

Let $\mathcal S$ be a self-adjoint relation on $\mathcal H \oplus \mathcal H$, and define the relations $\mathcal S_1=\mathcal S \cap\left(\overline{D(\mathcal S)}\oplus \overline{D(\mathcal S)} \right)$ and $\mathcal S_2=\{(0,g): g\in \mathcal S(0)\}$. Then $\mathcal S=\mathcal S_1\oplus \mathcal S_2$, $\mathcal S_1$ is the graph of a self-adjoint operator in $\overline{D(\mathcal S)}$, and $D(\mathcal S)^\perp=\mathcal S(0)$. The operator whose graph is $\mathcal S_1$ is denoted by $S$. The \textit{resolvent} of $\mathcal S$ is $(\mathcal S-z)^{-1}$ for $z\in \C/\sigma(S)$. The resolvent is (the graph of) a bounded normal operator, and its kernel is $\mathcal S(0)$. A \textit{spectral representation} of $\mathcal S$ is a linear map $U:\mathcal H \to \mathcal L$ such that $\mathcal L$ is either the Hilbert space $\bigoplus L^2(\rho_j)$ for some Borel measures $\rho_j$ on $\R$ or the Hilbert space $L^2(\rho)$ for some nonnegative matrix valued Borel measure on $\R$, $U$ maps $\overline{D(\mathcal S)}$ isometrically onto $\mathcal L$, $\ker U=\mathcal S(0)$, and $U\mathcal S U^*=M_t$, the operator of multiplication by the variable in $\mathcal L$. 

Turning to canonical systems $Ju'=-zHu$, let $H(x)\in\C^{2n\times 2n}$ for $x\in (a,b)$ be given. The endpoints satisfy $-\infty \le a <b \le \infty$. It is assumed that $H(x)\ge 0$ for almost every $x\in(a,b)$ and that $H\in L_{\textrm{loc}}^1(a,b)$. An endpoint $a$ or $b$ is called \textit{regular} if $H\in L^1(a,c)$ or $H\in L^1(c,b)$, respectively, for some $c\in(a,b)$. $H$ is called \textit{regular} if both $a$ and $b$ are regular.  

$H$ is called \textit{definite} on $(c,d)\subset (a,b)$ if, for every $v\in \C^{2n}$, $H(x)v=0$ for almost every $x\in (c,d)$ implies $v=0$. This is equivalent to $0<\int_c^d u(x)^*H(x)u(x) \, dx\le \infty$ for every non-trivial solution $u$ of \eqref{can} \cite[Lemma 2.10]{BHSW}. It is known that $H$ is definite on $(a,b)$ if and only if there is a bounded interval $(c,d)\subset (a,b)$ such that $H$ is definite on $(c,d)$ \cite[Proposition 2.11]{BHSW}.

Suppose that $f$ and $g$ are solutions of $Jf'=-zHf$ and $Jg'=-wHg$, respectively. Note that solutions are assumed to be (locally) absolutely continuous. A quick calculation shows that $(f^*Jg)'=(\overline z -w)f^*Hg$. Hence, the Lagrange type identity
\begin{equation}
f(d)^*Jg(d)-f(c)^*Jg(c)=(\overline z -w)\int_c^{d} f(x)^*H(x)g(x)  \, dx 
\end{equation}
holds for any $(c,d)\subset(a,b)$ on which $H$ is regular.

Suppose $V$ is the vector space of all Borel measurable $f:(a,b) \to {\C}^{2n}$ such that $\int_a^{b} f(x)^*H(x)f(x)  \, dx < \infty$, together with the seminorm 
\[
\|f\|= \left(\int_a^{b} f(x)^*H(x)f(x)  \, dx \right)^{1/2} .
\]
Let $N = \{ f \in V : \|f\|=0 \}$, and define $L_H^2(a,b)=V/N$. $L_H^2(a,b)$ is a Hilbert space.

The \textit{maximal relation} of the canonical system $H$ is the relation
\begin{align}
\label{maxrel}
\mathcal{T} = \{ &(f,g) \in L_H^2(a,b) \oplus L_H^2(a,b) : f \textrm{ has an AC representative } \nonumber \\
 &f_0 \textrm{ such that } Jf_0'(x)=-Hg(x) \textrm{ for a.e. } x\in (a,b) \} 
\end{align}
in $L_H^2(a,b) \oplus L_H^2(a,b)$, and the \textit{minimal relation} is the relation $\mathcal T_0=\mathcal T^*$. Suppose $H$ is definite on $(a,b)$. Then, for every $(f,g)\in \mathcal T$, there is an absolutely continuous function $f_0$ that is uniquely determined by the properties that it represents $f$ in $L_H^2(a,b)$ and that $Jf_0'=-Hg$ almost everywhere on $(a,b)$ \cite[Proposition 2.15]{Lesch}. The notation $f_0$ is frequently used in this paper for this function determined by $(f,g)\in \mathcal T$.

The \textit{deficiency indices} of $H$ are the dimensions of $\ker(\mathcal T-i)$ and $\ker(\mathcal T+i)$. If $H$ is definite on $(a,b)$, then $H$ being regular is equivalent to both deficiency indices being equal to $2n$ \cite[Proposition 2.19, Theorem 5.14]{Lesch}

Suppose that $H$ is definite on $(a,b)$. Let $\alpha \in \C^{n\times 2n}$ and $\beta \in \C^{n\times 2n}$ be such that
\begin{equation}
\label{bc}
\alpha \alpha^*=I=\beta \beta^* , \quad \alpha J \alpha^*=0=\beta J \beta^* .
\end{equation}
If $H$ is regular, then
\begin{equation}
\label{regcs}
\mathcal{S}^{\alpha,\beta} = \{ (f,g) \in \mathcal{T} : \alpha f_0(a)=0=\beta f_0(b) \}
\end{equation}
is a self-adjoint relation in $L_H^2(a,b) \oplus L_H^2(a,b)$ \cite[Corollary 5.7]{BHSW}. The corresponding self-adjoint operator in $\overline{D(\mathcal{S}^{\alpha,\beta})}$ is denoted by $S^{\alpha,\beta}$. If $a$ is a regular endpoint and $H$ has deficiency indices both equal to $n$, then
\begin{equation}
\label{lpcs}
\mathcal{S}^{\alpha}=\mathcal S = \{ (f,g) \in \mathcal{T} : \alpha f_0(a)=0 \}
\end{equation}
is a self-adjoint relation \cite[Corollary 5.12]{BHSW}, and the corresponding self-adjoint operator is denoted by $S^{\alpha}$ or $S$ if $\alpha$ is clear from the context. 
     
\section{Regular canonical systems}
The goal in this section is to define the $m$ function, study the resolvent, and set up a spectral representation for a regular canonical system. Assume that $H$ is definite and regular on a bounded interval $(a,b)$, and that boundary conditions $\alpha \in \C^{n\times 2n}$ and $\beta \in \C^{n\times 2n}$ are fixed at $a$ and $b$, respectively, with the properties given in \eqref{bc}. Throughout this section, $u$ and $v$ denote the solutions of $\eqref{can}$ with 
\[
u(a,z)=-J\alpha^* , \quad v(a,z)=\alpha^*.
\]
Note that the matrix solution $u(x,z)$ satisfies the boundary condition $\alpha$ at $0$, and its columns form a basis for vector solutions of $\eqref{can}$ that satisfy that boundary condition.
\begin{Theorem}
\label{regm}
For every $z\in \C$ that is not an eigenvalue of the self-adjoint relation $\mathcal S^{\alpha, \beta}$ defined by \eqref{regcs}, there is unique matrix $m(z) \in \C^{n\times n}$ such that the solution $f_m(x,z)=v(x,z)+u(x,z)m(z)$ of \eqref{can} satisfies the boundary condition $\beta$ at $b$. This unique $m(z)$ is equal to $-(\beta u(b,z))^{-1}\beta v(b,z)$. Moreover, $m(z)$ is meromorphic on $\C$, the poles of $m(z)$ are all of the eigenvalues of $\mathcal S^{\alpha, \beta}$, $\Im(m(z))>0$ for $z\in \C^+$, and $m(\overline z)=m(z)^*$.
\end{Theorem}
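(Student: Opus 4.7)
The plan is to use the boundary condition at $b$ to derive the closed-form expression for $m(z)$, and then extract the analytic properties from the Lagrange identity together with the explicit initial values $u(a,z)=-J\alpha^*$ and $v(a,z)=\alpha^*$.

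First, I would observe that the columns of $u(\cdot,z)$ form a basis for the $n$-dimensional space of solutions of $\eqref{can}$ satisfying $\alpha f(a)=0$, since $u(a,z)=-J\alpha^*$ has rank $n$ and $\alpha u(a,z)=-\alpha J\alpha^*=0$. By definiteness of $H$, such a solution is non-trivial in $L_H^2(a,b)$ as soon as its initial value is non-zero, so $z$ is an eigenvalue of $\mathcal{S}^{\alpha,\beta}$ exactly when the $n\times n$ matrix $\beta u(b,z)$ is singular. The boundary condition $\beta f_m(b,z)=0$ for $f_m=v+um(z)$ then reads $\beta u(b,z)\,m(z)=-\beta v(b,z)$, which has the unique solution $m(z)=-(\beta u(b,z))^{-1}\beta v(b,z)$ whenever $\beta u(b,z)$ is invertible. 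Analytic dependence of linear ODE solutions on the spectral parameter makes $u(b,\cdot)$ and $v(b,\cdot)$ entire, so $m(z)$ is meromorphic on $\C$ with poles contained in the zero set of $\det\beta u(b,\cdot)$.

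The most delicate point is to show that these poles cannot be cancelled by $\beta v(b,\cdot)$, so the pole set of $m$ coincides with the eigenvalue set. Suppose $z_0$ is an eigenvalue (necessarily real by self-adjointness) at which $m$ is holomorphic, and fix $c\neq 0$ with $\beta u(b,z_0)c=0$. Applying the Lagrange identity to $u$ and $v$ with $z=w=z_0\in\R$ shows that $u(x,z_0)^*Ju(x,z_0)$ and $u(x,z_0)^*Jv(x,z_0)$ are constant in $x$; evaluating at $a$ using $\alpha J\alpha^*=0$ and $\alpha\alpha^*=I$ yields $u^*Ju\equiv 0$ and $u^*Jv\equiv I$ on $(a,b)$. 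Hence
\[
c^*u(x,z_0)^*Jf_m(x,z_0)=c^*\bigl(u^*Jv+u^*Ju\cdot m(z_0)\bigr)=c^*
\]
for every $x$. On the other hand, $\beta J\beta^*=0$ is equivalent to $\ker\beta=J\cdot\mathrm{range}(\beta^*)$, from which a short calculation shows $p^*Jq=0$ for every $p,q\in\ker\beta$. Since both $u(b,z_0)c$ and $f_m(b,z_0)$ lie in $\ker\beta$, evaluating at $x=b$ gives $c^*=0$, a contradiction.

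For the Herglotz-type properties, I would apply the Lagrange identity to $f_m(\cdot,z)$ and $f_m(\cdot,w)$ and integrate over $(a,b)$. Isotropy of $\ker\beta$ kills the boundary term at $b$, while at $a$ a direct computation using $v(a)=\alpha^*$, $u(a)=-J\alpha^*$, $\alpha\alpha^*=I$ and $\alpha J\alpha^*=0$ yields $f_m(a,z)^*Jf_m(a,w)=m(w)-m(z)^*$. This gives the master identity
\[
m(z)^*-m(w)=(\bar z-w)\int_a^b f_m(x,z)^*H(x)f_m(x,w)\,dx.
\]
Setting $w=\bar z$ makes the right side vanish, yielding $m(\bar z)=m(z)^*$; setting $w=z$ gives $\Im m(z)=\Im(z)\int_a^b f_m(x,z)^*H(x)f_m(x,z)\,dx$. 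Positive definiteness on $\C^+$ follows because $\alpha f_m(a,z)=I$, so $f_m(\cdot,z)c$ is a non-trivial vector solution for every $c\neq 0$, and definiteness of $H$ on $(a,b)$ then makes $c^*\bigl(\int f_m^*Hf_m\bigr)c=\|f_m c\|_H^2>0$. The non-cancellation argument in paragraph two is the main obstacle; the remaining items are essentially direct computations using the explicit initial values of $u,v$ and the algebraic identities in $\eqref{bc}$.
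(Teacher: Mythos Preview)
Your argument is correct and follows essentially the same route as the paper: characterise the eigenvalues via singularity of $\beta u(b,z)$, read off the explicit formula for $m(z)$, and derive the Herglotz properties from the Lagrange identity with boundary terms computed from the explicit initial data and the isotropy of $\ker\beta$. Two small remarks. First, you actually do \emph{more} than the paper on one point: the paper simply asserts that the poles of $m$ coincide with the eigenvalues without addressing possible cancellation between $(\beta u(b,z))^{-1}$ and $\beta v(b,z)$, whereas your isotropy argument (pairing the eigensolution $u(\cdot,z_0)c$ against $f_m(\cdot,z_0)$ and evaluating the constant Wronskian at $a$ and $b$) genuinely rules this out. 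Second, there is a harmless sign slip in that step: one has $u(a)^*Jv(a)=\alpha J\cdot J\cdot\alpha^*=\alpha J^2\alpha^*=-I$, not $I$, so the constant is $c^*u^*Jf_m\equiv -c^*$ rather than $c^*$; the contradiction $c=0$ follows just the same.
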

\begin{proof} 
I claim that $z$ is an eigenvalue of $\mathcal S^{\alpha, \beta}$ if and only if $\beta u(b,z)$ has a non-trivial kernel. To see this, suppose first that $c \in \ker(\beta u(b,z))$ and $c\neq 0$. Notice that then $u(x,z)c$ is a solution of \eqref{can} that satisfies the boundary conditions $\alpha$ and $\beta$; since $c\neq 0$, it is not the trivial solution because $u(a,z)c \neq 0$. Since $H$ is definite on $(a,b)$, this means that $u(x,z)c$ is an eigenvector for $\mathcal S^{\alpha, \beta}$ with eigenvalue $z$. For the converse, suppose $z$ is an eigenvalue of $\mathcal S^{\alpha, \beta}$. So, there is a nontrivial solution of \eqref{can} that satisfies the boundary conditions $\alpha$ at $a$ and $\beta$ at $b$. Since this nontrivial solution satisfies the boundary condition $\alpha$, it is of the form $u(x,z)c$ for some nonzero $c \in \C^n$. Fix such a $c$. Since the solution $u(x,z)c$ also satisfies the boundary condition $\beta$, this means $c \in \ker(\beta u(b,z))$.

Now, assume that $z$ is not an eigenvalue of $\mathcal S^{\alpha, \beta}$. Thus, since $\beta u(b,z)$ has a trivial kernel, $m(z)=-(\beta u(b,z))^{-1}\beta v(b,z)$ is well-defined. It is trivial to check that $f_m(x,z)=v(x,z)+u(x,z)m(z)$ is a solution of \eqref{can} that satisfies the boundary condition $\beta$ at $b$. To see that $m(z)$ is unique, suppose that $m \in \C^{n\times n}$ is such that $v(x,z)+u(x,z)m$ satisfies the boundary condition $\beta$ at $b$. So, $0=\beta v(b,z)+\beta u(b,z)m$. Thus, since $\beta u(b,z)$ is invertible, $m=-(\beta u(b,z))^{-1}\beta v(b,z)$.

Since the initial values $u(a,z)$ and $v(a,z)$ are constant functions of $z$, the solutions $u(x,z)$ and $v(x,z)$ are entire functions of $z$. So, since $\beta u(b,z)$ is invertible if and only if $z$ is not an eigenvalue of $\mathcal S^{\alpha, \beta}$, the function $m(z)=-(\beta u(b,z))^{-1}\beta v(b,z)$ is meromorphic on $\C$ and its poles are all of the eigenvalues of $\mathcal S^{\alpha, \beta}$.

To show $\Im(m(z))>0$ for $z\in \C^+$, fix $z \in \C^+$. A direct calculation, using the fact that $f_m$ solves \eqref{can}, shows that $(f_m^*Jf_m)'=-2i\Im(z)f_m^*Hf_m$. So, 
{%
\small
\[
f_m(b,z)^*Jf_m(b,z)-f_m(a,z)^*Jf_m(a,z)=-2i\Im(z) \int_a^b f_m(x,z)^*H(x)f_m(x,z) \, dx .
\]
}%
Since $f_m$ satisfies the boundary condition $\beta$ at $b$, $f_m(b,z)^*Jf_m(b,z)=0$. A direct calculation, using the assumptions \eqref{bc} about $\alpha$, shows that $f(a,z)^*Jf(a,z)=m(z)-m(z)^*$. So, 
\[
\Im(m(z))=\Im(z) \int_a^b f_m(x,z)^*H(x)f_m(x,z) \, dx >0
\]
for $z\in \C^+$ since $H$ is definite on $(a,b)$.

To see that $m(\overline z)=m(z)^*$, first note that since $(f_m(x,z)^*Jf_m(x,\overline z))'=0$, $f_m(x,z)^*Jf_m(x,\overline z)$ is a constant function of $x$. Again, since $f_m$ satisfies the boundary condition $\beta$ at $b$, $f_m(b,z)^*Jf_m(b,\overline z)=0$. So, $0=f_m(a,z)^*Jf_m(a,\overline z)=m(\overline z)-m(z)^*$.
\end{proof}

\begin{Theorem}
\label{regres}
Define
\[
G(x,y,z) = \begin{cases} u(x,z)f_m(y,\overline z)^* & x\leq y \\ f_m(x,z)u(y,\overline z)^* & x>y \end{cases}
\]
for $z \in \C / \R$. Then 
\[
((\mathcal S^{\alpha, \beta}-z)^{-1}h)(x)= \int_a^b G(x,y,z)H(y)h(y) \, dy
\]
for all $h\in L_H^2(a,b)$.
\end{Theorem}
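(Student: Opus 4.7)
The plan is to verify directly that if $f(x) := \int_a^b G(x,y,z) H(y) h(y)\,dy$, then $(f, zf + h) \in \mathcal{S}^{\alpha,\beta}$. Since for $z \in \C \setminus \R$ the resolvent $(\mathcal{S}^{\alpha,\beta} - z)^{-1}$ is a single-valued bounded operator (Section~2), this will yield the formula. First I would split the integral at $x$ to obtain $f(x) = u(x,z) A(x) + f_m(x,z) B(x)$, where $A(x) = \int_x^b f_m(y,\bar z)^* H(y) h(y)\,dy$ and $B(x) = \int_a^x u(y,\bar z)^* H(y) h(y)\,dy$. Both are absolutely continuous on $[a,b]$: continuity of $u$ and $f_m$ together with Cauchy--Schwarz in $L_H^2$ show the integrands are in $L^1$, and then $f$ itself is absolutely continuous as a product.

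Next, the product rule together with $Ju'(x,z) = -zH(x)u(x,z)$, $Jf_m'(x,z) = -zH(x)f_m(x,z)$, and the formulas $A'(x) = -f_m(x,\bar z)^* H(x) h(x)$, $B'(x) = u(x,\bar z)^* H(x) h(x)$ yields
\[
Jf'(x) = -zH(x) f(x) + J\bigl[f_m(x,z) u(x,\bar z)^* - u(x,z) f_m(x,\bar z)^*\bigr] H(x) h(x).
\]
The crux is the identity $f_m(x,z) u(x,\bar z)^* - u(x,z) f_m(x,\bar z)^* \equiv J$. Substituting $f_m(\cdot,z) = v + u\,m(z)$ and $f_m(\cdot,\bar z) = v + u\,m(\bar z)$ and using $m(\bar z)^* = m(z)$ from Theorem~\ref{regm}, the $m$-dependent contributions cancel, leaving $v(x,z) u(x,\bar z)^* - u(x,z) v(x,\bar z)^*$. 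Forming the solution matrix $\Phi(x,z) := (u(x,z), v(x,z)) \in \C^{2n \times 2n}$, the Lagrange identity with parameters $\bar z$ and $z$ shows $\Phi(x,\bar z)^* J \Phi(x,z)$ is constant in $x$; evaluating at $x = a$ using $\Phi(a) = (-J\alpha^*, \alpha^*)$ and the relations $\alpha\alpha^* = I$, $\alpha J \alpha^* = 0$ shows this constant equals the $2n \times 2n$ matrix $J$ itself, so $\Phi(x,z)$ is invertible with $\Phi(x,\bar z)^* = -J\Phi(x,z)^{-1} J$. Rearranging, $v(x,z) u(x,\bar z)^* - u(x,z) v(x,\bar z)^* = \Phi(x,z) J \Phi(x,\bar z)^* = J$. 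Consequently $J[\cdots] H = J^2 H = -H$, whence $Jf'(x) = -H(x)(zf(x) + h(x))$ almost everywhere.

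It remains to verify the boundary conditions and square-integrability. At $x = a$, $B(a) = 0$, so $f(a) = -J\alpha^* A(a)$ and $\alpha f(a) = -(\alpha J \alpha^*) A(a) = 0$; at $x = b$, $A(b) = 0$, so $f(b) = f_m(b,z) B(b)$ and $\beta f(b) = 0$ by the construction of $f_m$ in Theorem~\ref{regm}. Continuity of the solutions on the compact interval $[a,b]$ gives a uniform bound on $G$, whence $f \in L_H^2(a,b)$. Thus $(f, zf + h) \in \mathcal{S}^{\alpha,\beta}$, equivalently $(h, f) \in (\mathcal{S}^{\alpha,\beta} - z)^{-1}$, proving the theorem.

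The main obstacle is the Wronskian-type identity $v(x,z) u(x,\bar z)^* - u(x,z) v(x,\bar z)^* \equiv J$; this encodes the reproducing kernel structure of $G$ and relies on the specific initial conditions $u(a,z) = -J\alpha^*$, $v(a,z) = \alpha^*$ through the block evaluation of the Wronskian at the endpoint $a$. The remaining work, including checking absolute continuity, differentiating $A$ and $B$, and verifying the boundary conditions, is routine once that identity is in hand.
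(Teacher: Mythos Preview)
Your proof is correct and follows essentially the same approach as the paper: split the integral at $x$, differentiate, reduce the inhomogeneous term to the Wronskian-type identity $f_m(x,z)u(x,\bar z)^*-u(x,z)f_m(x,\bar z)^*=J$ via the fundamental matrix $\Phi=(u,v)$ and its constancy property $\Phi^*J\Phi\equiv J$, and then check the boundary conditions using $B(a)=0$, $A(b)=0$. The only cosmetic difference is that the paper keeps the $m$-blocks inside the $W$-representation rather than cancelling them first as you do, but the substance---constancy of the Wronskian and its evaluation at $x=a$---is identical.
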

\begin{proof}
Fix $h\in L_H^2(a,b)$. Let $g(x)=\int_a^b G(x,y,z)H(y)h(y) \, dy$. We will show that the function $g(x)$ is a solution of $Jg'=-zHg-Hh$ that satisfies both boundary conditions $\alpha$ and $\beta$, which will imply that $(g,h) \in \mathcal S^{\alpha, \beta}-z$. Since 
\begin{align*}
\int_a^b G(x,y,z)H(y)h(y) \, dy=&f_m(x,z)\int_a^x u(y,\overline z)^*H(y)h(y) \, dy\\
&+u(x,z)\int_x^b f_m(y,\overline z)^*H(y)h(y) \, dy ,
\end{align*}
it is obvious that $g(x)=\int_a^b G(x,y,z)H(y)h(y) \, dy$ is an absolutely continuous function for $h\in L_H^2(a,b)$. By direct calculation, $g$ satisfies 
\[
Jg'(x)=-zH(x)g(x)+J(f_m(x,z)u(x,\overline z)^*-u(x,z)f_m(x,\overline z)^*)H(x)h(x) .
\]
Let $W(x,z)= (u(x,z) \: v(x,z))$. Then 
\[
f_m(x,z)u(x,\overline z)^*=W(x,z)\begin{pmatrix} m(z) & 0 \\ I & 0 \end{pmatrix}W(x,\overline z)^*
\]
and
\begin{align*}
u(x,z)f_m(x,\overline z)^*&=W(x,z)\begin{pmatrix} m(\overline z)^* & I \\ 0 & 0 \end{pmatrix}W(x,\overline z)^*\\
&=W(x,z)\begin{pmatrix} m(z) & I \\ 0 & 0 \end{pmatrix}W(x,\overline z)^* .
\end{align*}
So, $f_m(x,z)u(x,\overline z)^*-u(x,z)f_m(x,\overline z)^*=W(x,z)^*JW(x,\overline z)$. Now, $W$ is a solution of \eqref{can}, so $(W(x,z)^*JW(x,\overline z))'=0$. Hence, 
\begin{align*}
f_m(x,z)u(x,\overline z)^*-u(x,z)f_m(x,\overline z)^*&=W(a,z)^*JW(a,\overline z)\\
&=\begin{pmatrix} \alpha J \\ \alpha \end{pmatrix} \begin{pmatrix} \alpha^* & J\alpha^* \end{pmatrix}\\
&=J .
\end{align*}
Thus, $Jg'=-zHg-Hh$.

Since $(a,b)$ is finite and $g$ is absolutely continuous, $g\in L_H^2(a,b)$. So, $(g,zg+h) \in \mathcal T^{\alpha, \beta}$. Also, 
\[
\alpha g(a)=\alpha u(a,z)\int_a^b f_m(y,\overline z)^*H(y)h(y) \, dy =0
\]
and
\[
\beta g(b)=\beta f_m(b,z)\int_a^b u(y,\overline z)^*H(y)h(y) \, dy =0
\]
since $\alpha u(a,z)=0=\beta f_m(b,z)$. Hence, $(g,zg+h) \in \mathcal S^{\alpha, \beta}$. So, $(g,h) \in \mathcal S^{\alpha, \beta}-z$, and the theorem follows since $(\mathcal S^{\alpha, \beta}-z)^{-1}$ is an operator.
\end{proof}

\begin{Corollary}
\label{regHS}
$(\mathcal S^{\alpha, \beta}-z)^{-1}$ is a Hilbert-Schmidt operator for $z \in \C / \R$. Hence, $\sigma(\mathcal S^{\alpha, \beta}) = \{t_j\}$ is purely discrete and $\sum_j \frac{1}{1+t_j^2}<\infty$.
\end{Corollary}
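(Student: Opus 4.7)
The plan is to show $(\mathcal S^{\alpha, \beta}-z)^{-1}$ is Hilbert-Schmidt by comparing it, via an isometric embedding, to an integral operator on a standard vector-valued $L^2$ space. Define $V : L_H^2(a,b) \to L^2((a,b);\C^{2n})$ by $V[h] = H^{1/2} h_0$ for any representative $h_0$ of $[h]$. The integral $\int_a^b h_0^* H h_0\,dx$ equals both $\|H^{1/2} h_0\|_{L^2}^2$ and the squared norm of $[h]$ in $L_H^2(a,b)$, so $V$ is well-defined on equivalence classes and is an isometry of $L_H^2(a,b)$ onto $\mathcal M := \overline{\textrm{range}(V)}$. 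A direct computation using Theorem \ref{regres} shows that $V(\mathcal S^{\alpha, \beta}-z)^{-1}V^*$, extended by zero on $\mathcal M^\perp$, is the integral operator $\widetilde T$ on $L^2((a,b);\C^{2n})$ with matrix kernel
\[
K(x,y,z) = H(x)^{1/2} G(x,y,z) H(y)^{1/2};
\]
the zero extension is legitimate because any $g \perp \mathcal M$ satisfies $H(y)^{1/2} g(y) = 0$ a.e., so the kernel annihilates it. Consequently $\|(\mathcal S^{\alpha, \beta}-z)^{-1}\|_{\textrm{HS}} = \|\widetilde T\|_{\textrm{HS}}$.

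Next I would bound the Frobenius norm of $K$. Since $u(\cdot, z)$ and $f_m(\cdot, z)$ are continuous on the compact interval $[a,b]$, $G(x,y,z)$ is uniformly bounded on $[a,b]^2$, say by $C(z)$ in operator norm. The trace inequality $\tr(AB) \le \tr(A)\,\|B\|_{\textrm{op}}$ (valid for $A \ge 0$), together with $\|H(y)\|_{\textrm{op}} \le \tr H(y)$ for $H(y) \ge 0$, yields
\[
\|K(x,y,z)\|_F^2 = \tr\bigl(H(x) G(x,y,z) H(y) G(x,y,z)^*\bigr) \le C(z)^2\, \tr H(x)\, \tr H(y).
\]
Integrating over $[a,b]^2$ gives a bound of $C(z)^2 \bigl(\int_a^b \tr H(x)\,dx\bigr)^2 < \infty$ since $H \in L^1(a,b)$. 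Hence $\widetilde T$, and therefore $(\mathcal S^{\alpha, \beta}-z)^{-1}$, is Hilbert-Schmidt.

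The spectral consequences follow from standard operator theory: a Hilbert-Schmidt operator is compact, so the compact normal resolvent has spectrum accumulating only at $0$, which translates to $\sigma(\mathcal S^{\alpha, \beta}) = \{t_j\}$ being a discrete set of real eigenvalues. Setting $z = i$, the eigenvalues of $(\mathcal S^{\alpha, \beta} - i)^{-1}$ are $1/(t_j - i)$, and for a compact normal operator the squared Hilbert-Schmidt norm equals the sum of squared eigenvalue moduli, yielding $\sum_j \frac{1}{1+t_j^2} = \|(\mathcal S^{\alpha, \beta}-i)^{-1}\|_{\textrm{HS}}^2 < \infty$.

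The main obstacle I expect is the setup of $V$: because $L_H^2(a,b)$ is a quotient of a space of functions and $H$ is generally not invertible, one has to verify that $V$ is well-defined on equivalence classes and identify the conjugated kernel correctly. Once $K$ is in hand, the bound and the spectral conclusions are routine.
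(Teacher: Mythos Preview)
Your proposal is correct and follows essentially the same approach as the paper: the paper introduces the same isometry $V h = H^{1/2} h$ into the standard $L^2$ space and observes that the conjugated resolvent has the kernel $H(x)^{1/2} G(x,y,z) H(y)^{1/2}$, merely declaring this kernel ``clearly square integrable.'' Your version simply fills in the details the paper omits (well-definedness of $V$ on equivalence classes, the explicit trace bound on $\|K\|_F^2$, and the derivation of the spectral consequences), so there is no substantive difference in strategy.
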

\begin{proof}
Define the isometry $V: L_H^2(a,b) \to L^2(a,b)$ by $Vh=H^{1/2}h$. Then $V$ transforms $(\mathcal S^{\alpha, \beta}-z)^{-1}$ an integral operator on $V(L_H^2(a,b))$ with kernel 
\[
\begin{cases} H^{1/2}(x)u(x,z)f_m(y,\overline z)^*H^{1/2}(y) & x\leq y \\ H^{1/2}(x)f_m(x,z)u(y,\overline z)^*H^{1/2}(y) & x>y \end{cases} ,
\] 
which is clearly square integrable.
\end{proof} 
By Theorem \ref{regm}, $m(z)$ is a Herglotz function, i.e. a holomorphic function $\C^+ \to \C^{n\times n}$ such that $\Im(m(z))\ge 0$. It is well-known that such functions have a unique representation
\begin{equation}
\label{Herrep}
m(z) = A + Bz + \int_{-\infty}^{\infty} \left( \frac{1}{t-z} - \frac{t}{t^2+1} \right)\, d\rho(t) ,
\end{equation}
with $A=A^*$, $B\ge 0$, and $\rho$ a nonnegative matrix valued Borel measure on $\R$ such that $\int\frac{d\rho(t)}{1+t^2}<\infty$ \cite{GT}. This representation is called the Herglotz representation of $m(z)$. Note that $\rho$ is a discrete measure for $m$ coming from a regular canonical system by Theorem \ref{regm}.
\begin{Theorem}
\label{regrep}
Let
\[
(Uh)(t)=\int_a^b u(x,t)^*H(x)h(x) \, dx
\]
for $h\in L_H^2(a,b)$. Then $U:L_H^2(a,b) \to L^2(\rho)$, where $\rho$ is the measure in \eqref{Herrep}, provides a spectral representation of $\mathcal S^{\alpha, \beta}$. Moreover, $\rho$ can be reconstructed as follows. Let $\varphi_{jk}$, $k=1,\, \dots,\,M(j)$, be orthonormal eigenvectors corresponding to the eigenvalue $t_j\in \sigma(\mathcal S^{\alpha, \beta})$ with multiplicity $M(j)$. Choose $\varphi_{jk}$ to be solutions of $J\varphi'=-t_jH\varphi$, and write $\varphi_{jk}(x)=u(x,t_j)c_{jk}$. Then $\rho(\{t_j\})=\sum_{k=1}^{M(j)}c_{jk}c_{jk}^*$.
\end{Theorem}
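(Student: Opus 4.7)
My approach is to first extract an explicit formula for the point masses $\rho(\{t_j\})$ by comparing two expressions for the residue of the resolvent at each eigenvalue, and then use that formula to verify via a Parseval-type identity that $U$ meets the four defining requirements of a spectral representation.

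By Corollary \ref{regHS} and Theorem \ref{regm}, the measure $\rho$ in \eqref{Herrep} is discrete, supported on the eigenvalues $\{t_j\}$, with $R_j := \rho(\{t_j\}) = \lim_{z\to t_j}(t_j-z)m(z) \ge 0$. From the Laurent expansion $m(z) = R_j/(t_j-z) + O(1)$ one obtains $f_m(x,z) = u(x,z)R_j/(t_j-z) + O(1)$, and substituting this into the Green function of Theorem \ref{regres} shows that the residue of $(\mathcal S^{\alpha,\beta}-z)^{-1}$ at $t_j$ is (minus) the integral operator on $L_H^2(a,b)$ with kernel $u(x,t_j)R_j u(y,t_j)^*$ (acting against $H(y)h(y)\,dy$). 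On the spectral side, since $\mathcal S^{\alpha,\beta}$ has compact resolvent (Corollary \ref{regHS}), the same residue equals $-P_j$, where $P_j = \sum_{k=1}^{M(j)} \langle \varphi_{jk},\cdot\rangle \varphi_{jk}$ is the spectral projection, which is the integral operator with kernel $u(x,t_j)\bigl(\sum_k c_{jk}c_{jk}^*\bigr)u(y,t_j)^*$. Equating the two kernels and cancelling $u(x,t_j)$ on the left and $u(y,t_j)^*$ on the right yields the reconstruction formula $R_j = \sum_k c_{jk}c_{jk}^*$. The cancellation is justified because $u(a,t_j) = -J\alpha^*$ has trivial kernel (as $\alpha\alpha^* = I$ forces $\alpha^* c = 0 \Rightarrow c = 0$), and because the map $h \mapsto \int u(y,t_j)^*H(y)h(y)\,dy$ is surjective onto $\C^n$: otherwise some nonzero $c$ would annihilate its range, forcing $\|u(\cdot,t_j)c\|_{L_H^2}=0$ with $u(\cdot,t_j)c$ a nontrivial solution, contradicting definiteness.

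Given this formula, the spectral representation properties of $U$ follow directly. For any $h \in L_H^2(a,b)$, a line of calculation gives $c_{jk}^*(Uh)(t_j) = \int \varphi_{jk}(x)^*H(x)h(x)\,dx = \langle \varphi_{jk},h\rangle$, so
\[
\|Uh\|_{L^2(\rho)}^2 \;=\; \sum_j (Uh)(t_j)^* R_j (Uh)(t_j) \;=\; \sum_{j,k} \bigl|\langle \varphi_{jk},h\rangle\bigr|^2.
\]
Because $\{\varphi_{jk}\}$ is an orthonormal basis of $\overline{D(\mathcal S^{\alpha,\beta})}$, this equals $\|h\|^2$ when $h \in \overline{D(\mathcal S^{\alpha,\beta})}$ and vanishes precisely on its orthogonal complement $\mathcal S^{\alpha,\beta}(0)$. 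Surjectivity of $U$ onto $L^2(\rho)$ follows by constructing preimages of the basis elements of $L^2(\rho)$ that pair with the $c_{jk}$, and the intertwining $U\mathcal S^{\alpha,\beta}U^* = M_t$ reduces to $\mathcal S^{\alpha,\beta}\varphi_{jk} = t_j\varphi_{jk}$. I expect the main obstacle to be the residue computation together with justifying the matrix cancellation used in the reconstruction formula; once these are in place, the verification of the four spectral representation axioms is essentially bookkeeping.
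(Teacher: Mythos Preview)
Your proposal is correct and follows essentially the same strategy as the paper: compare the singular part of the resolvent at each eigenvalue, computed on one hand from the explicit Green function of Theorem~\ref{regres} (using $(t_j-z)m(z)\to R_j$) and on the other from the spectral expansion $\sum_j (t_j-z)^{-1}P_j$, then cancel $u(x,t_j)$ and $u(y,t_j)^*$ via injectivity of $u(a,t_j)=-J\alpha^*$ and definiteness. The only differences are organizational: the paper reverses your order, first establishing the isometry identity $(Uh)(t)^*\rho(\{t\})(Uh)(t)=\sum_k|\langle\varphi_{jk},h\rangle|^2$ from the scalar limit $\lim_{y\to 0^+}(-iy)\langle h,(\mathcal S-z)^{-1}h\rangle$ with $z=t+iy$, and only afterward extracting $\rho(\{t_j\})=\sum_k c_{jk}c_{jk}^*$ from the vector limit $\lim_{y\to 0^+}(-iy)(\mathcal S-z)^{-1}h$; it also verifies $U\mathcal S U^*=M_t$ by an integration-by-parts computation of $t(Uh)(t)$ rather than by reducing to the eigenbasis. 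One point you should make explicit in the residue step is that the $\overline z$ in $f_m(y,\overline z)^*$ is harmless because $m(\overline z)^*=m(z)$ (Theorem~\ref{regm}), so the only singular contribution to $G(x,y,z)$ near $t_j$ comes from $m(z)$ itself; the paper handles this by rewriting $G$ in the form $W(x,z)\bigl(\begin{smallmatrix} m(z) & \frac12 I\\ \frac12 I & 0\end{smallmatrix}+\frac{\sigma}{2}J\bigr)W(s,\overline z)^*$.
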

\begin{proof}
The overall structure of the proof is as follows. First, it will be shown that $U$ is isometric on $\overline{D(\mathcal S^{\alpha, \beta})}$ and $\ker U= \mathcal S^{\alpha, \beta}(0)$ by computing $\lim_{y\to 0^+}-iy\langle h,(S^{\alpha, \beta}-z)^{-1}h \rangle$ for $z=t+iy \in \C^+$ and $h \in L_H^2(a,b)$ in two different ways, using the functional calculus and then using Theorem \ref{regres}. The formula for $\rho(\{t_j\})$ in the theorem statement will follow from similar computations with $\lim_{y\to 0^+}-iy(S^{\alpha, \beta}-z)^{-1}h$ for $z=t_j+iy \in \C^+$. Surjectivity of $U$ will then follow easily from the previous steps. Finally, straightforward computation and inspection of domains will show that $U\mathcal S^{\alpha, \beta}U^*=M_t$, where $M_t$ is multiplication by the independent variable $t$ in $L^2(\rho)$. Key throughout will be the fact that the spectrum of $\mathcal S^{\alpha, \beta}$ is purely discrete by Corollary \ref{regHS}.

Let $z=t+iy \in \C^+$. Since the spectrum is purely discrete,
\begin{equation}
\label{fc}
\lim_{y\to 0^+}-iy\langle h,(S^{\alpha, \beta}-z)^{-1}h \rangle = \begin{cases} \sum_{k=1}^{M(j)} |\langle \varphi_{jk},h \rangle |^2 & t=t_j\in \sigma(S^{\alpha, \beta}) \\ 0 & t \notin \sigma(S^{\alpha, \beta}) \end{cases}
\end{equation}
for all $h \in L_H^2(a,b)$ by the functional calculus. Also, by Theorem \ref{regres}, 
\[
\langle h,(S^{\alpha, \beta}-z)^{-1}h \rangle = \int_a^b \int_a^b h(x)^*H(x)G(x,s,z)H(s)h(s) \, ds\, dx .
\]
Let $W=(u \: v)$. Then
\[
f(x,z)u(s,\overline z)^*=W(x,z)\begin{pmatrix} m(z) & 0 \\ I & 0 \end{pmatrix}W(s,\overline z)^*
\]
and
\[
u(x,z)f(s,\overline z)^*=W(x,z)\begin{pmatrix} m(z) & I \\ 0 & 0 \end{pmatrix}W(s,\overline z)^* .
\]
So, 
\[
G(x,s,z)=W(x,z)\left( \begin{pmatrix} m(z) & \frac{1}{2}I \\ \frac{1}{2}I & 0 \end{pmatrix} +\frac{\sigma(x,s)}{2}J \right) W(s,\overline z)^* ,
\]
where 
\[
\sigma(x,s)=\begin{cases} -1 & x \leq s\\ 1 & x>s \end{cases} .
\]
Now, $W(x,t+iy)$ and $W(s,t-iy)$ are continuous functions for $x,s \in (a,b)$ and $y\in [0,\delta]$ for any $\delta>0$, and $Hh \in L^1(a,b)$. So, 
\[
-iy\int_a^b \int_a^b h(x)^*H(x)W(x,z)\left( \frac{\sigma(x,s)}{2}J \right) W(s,\overline z)^*H(s)h(s) \, ds\, dx 
 \]
 goes to $0$ as $y\to 0^+$, and, using the well-know fact that $\lim_{y\to 0^+}-iym(z)= \rho(\{t\})$ \cite[Theorem 5.5]{GT},
 \begin{align*}
 &\lim_{y\to 0^+}-iy \int_a^b \int_a^b h(x)^*H(x)W(x,z) \begin{pmatrix} m(z) & \frac{1}{2}I \\ \frac{1}{2}I & 0 \end{pmatrix} W(s,\overline z)^*H(s)h(s) \, ds\, dx \\
 &=\int_a^b \int_a^b h(x)^*H(x)W(x,t) \begin{pmatrix} \rho(\{t\}) & 0 \\ 0 & 0 \end{pmatrix} W(s,t)^*H(s)h(s) \, ds\, dx \\
 &= \int_a^b \int_a^b h(x)^*H(x)u(x,t) \rho(\{t\}) u(s,t)^*H(s)h(s) \, ds\, dx \\
 &=((Uh)(t))^* \rho(\{t\}) (Uh)(t) .
 \end{align*}
Combining this with Equation \ref{fc} gives that
 \[
 ((Uh)(t))^* \rho(\{t\}) (Uh)(t)= \begin{cases} \sum_{k=1}^{M(j)} |\langle \varphi_{jk},h \rangle |^2 & t=t_j\in \sigma(S^{\alpha, \beta}) \\ 0 & t \notin \sigma(S^{\alpha, \beta}) \end{cases} .
 \]
Since $\{ \varphi_{jk} \}$ is an orthonormal basis of $\overline{D(\mathcal S^{\alpha, \beta})}=\mathcal S^{\alpha, \beta}(0)^\perp$, this means that $U$ maps $\overline{D(\mathcal S^{\alpha, \beta})}$ isometrically into $L^2(\rho)$ and $\ker U= \mathcal S^{\alpha, \beta}(0)$.

A similar argument gives the formula for $\rho(\{t_j\})$. Let $z=t_j+iy$ and $h\in L_H^2(a,b)$. By the functional calculus,
\[
\lim_{y\to 0^+}-iy(S^{\alpha, \beta}-z)^{-1}h = \sum_{k=1}^{M(j)} \langle \varphi_{jk},h \rangle \varphi_{jk}
\]
The limit here is taken with respect to the norm in $L_H^2(a,b)$. As a consequence, there exists a sequence $y_l \searrow 0$ such that
\[
H(x)\lim_{l \to \infty}-iy_l((S^{\alpha, \beta}-z)^{-1}h)(x) = H(x)\sum_{k=1}^{M(j)} \langle \varphi_{jk},h \rangle \varphi_{jk}(x)
\]
almost everywhere. By Theorem \ref{regres} and the arguments above, 
\begin{align*}
&\lim_{y\to 0^+}-iy((S^{\alpha, \beta}-z)^{-1}h)(x)\\
&= \lim_{y\to 0^+}-iy \int_a^b W(x,z) \begin{pmatrix} m(z) & \frac{1}{2}I \\ \frac{1}{2}I & 0 \end{pmatrix} W(s,\overline z)^*H(s)h(s) \, ds\\
&= \int_a^b W(x,t_j) \begin{pmatrix} \rho(\{t_j \}) & 0 \\ 0 & 0 \end{pmatrix} W(s,t_j)^*H(s)h(s) \, ds\\
&= \int_a^b u(x,t_j) \rho(\{t_j \}) u(s,t_j)^*H(s)h(s) \, ds\\
&=  u(x,t_j) \rho(\{t_j \}) (Uh)(t_j) .
\end{align*}
Hence,
\[
H(x)\sum_{k=1}^{M(j)} \langle \varphi_{jk},h \rangle \varphi_{jk}(x) =  H(x)u(x,t_j) \rho(\{t_j \}) (Uh)(t_j)
\]
for almost every $x$. Now, $\sum_{k=1}^{M(j)} \langle \varphi_{jk},h \rangle \varphi_{jk}$ and $u(x,t_j) \rho(\{t_j \}) (Uh)(t_j)$ are solutions of $Ju'=-t_jHu$. So,
\begin{align*}
&J\left( \sum_{k=1}^{M(j)} \langle \varphi_{jk},h \rangle \varphi_{jk}(x) - u(x,t_j) \rho(\{t_j \}) (Uh)(t_j) \right)'\\
&= -t_jH(x)\left( \sum_{k=1}^{M(j)} \langle \varphi_{jk},h \rangle \varphi_{jk}(x) - u(x,t_j) \rho(\{t_j \}) (Uh)(t_j) \right)=0 .
\end{align*}
Thus, $\sum_{k=1}^{M(j)} \langle \varphi_{jk},h \rangle \varphi_{jk}(x) - u(x,t_j) \rho(\{t_j \}) (Uh)(t_j)$ is some constant $c$, but then $H(x)c=0$ almost everywhere. Since $H$ is definite on $(a,b)$, this means that $c=0$, i.e.
\[
\sum_{k=1}^{M(j)} \langle \varphi_{jk},h \rangle \varphi_{jk}(x) = u(x,t_j) \rho(\{t_j \}) (Uh)(t_j)
\] 
for every $x \in (a,b)$. So, since $\varphi_{jk}(x)=u(x,t_j)c_{jk}$, 
\begin{align*}
0&= \sum_{k=1}^{M(j)} \langle \varphi_{jk},h \rangle \varphi_{jk}(x) - u(x,t_j) \rho(\{t_j \}) (Uh)(t_j)\\
&= u(x,t_j) \int_a^b \left( \sum_{k=1}^{M(j)} c_{jk}c_{jk}^*-\rho(\{t_j\}) \right) u(s,t_j)^*H(s)h(s) \, ds 
\end{align*}
for every $x \in (a,b)$. Now, for fixed $x$, $u(x,t_j): \C^n \to \C^{2n}$ is injective, so
\[
\int_a^b \left( \sum_{k=1}^{M(j)} c_{jk}c_{jk}^*-\rho(\{t_j\}) \right) u(s,t_j)^*H(s)h(s) \, ds =0
\]
for every $h \in L_H^2(a,b)$. Hence,
\[
H(x)u(x,t_j)\left( \sum_{k=1}^{M(j)} c_{jk}c_{jk}^*-\rho(\{t_j\}) \right)^*=0
\]
for almost every $x$. So, since $u(x,t_j)\left( \sum_{k=1}^{M(j)} c_{jk}c_{jk}^*-\rho(\{t_j\}) \right)^*$ is a solution of $Ju'=-t_jHu$, $u(x,t_j)\left( \sum_{k=1}^{M(j)} c_{jk}c_{jk}^*-\rho(\{t_j\}) \right)^*$ is a constant, and thus $H(x)$ times that constant equals $0$ almost everywhere. Since $H$ is definite on $(a,b)$, this implies that $u(x,t_j)\left( \sum_{k=1}^{M(j)} c_{jk}c_{jk}^*-\rho(\{t_j\}) \right)^*$ is equal to $0$ for every $x$. The claimed formula $\rho(\{t_j\})=\sum_{k=1}^{M(j)} c_{jk}c_{jk}^*$ follows.

Notice that $\rho(\{t_j\}) (U\varphi_{lm})(t_j)=0$ for $j\neq l$ since 
\[
c_{jk}^*\int_a^b u(x,t_j)^*H(x)\varphi_{lm}(x) \, dx=\langle \varphi_{jk},\varphi_{lm} \rangle =0 .
\]
Also, note that $\rho(\{t_j\})$ has rank $M(j)$ since $\{c_{jk}\}$ is linearly independent. So, $\{ U\varphi_{jk} \}$ is an orthonormal basis for $L^2(\rho)$. Hence, $U$ is surjective.

Finally, to prove that $U\mathcal S^{\alpha, \beta}U^*=M_t$, it suffices to show that $M_tUh=Ug$ for every $(h,g) \in \mathcal S^{\alpha, \beta}$ with $g \in \overline{D(\mathcal S^{\alpha, \beta})}$, and that $D(\mathcal S^{\alpha, \beta}) = \{ h \in \overline{D(\mathcal S^{\alpha, \beta})}: Uh \in D(M_t) \}$. Fix $t=t_j \in \sigma(\mathcal S^{\alpha, \beta})$ and $(h,g) \in \mathcal S^{\alpha, \beta}$ with $g \in \overline{D(\mathcal S^{\alpha, \beta})}$, and let $h_0$ denotes the unique absolutely continuous representative of $h$ that satisfies $Jh_0'=-Hg$. Then 
\begin{align*}
t(Uh)(t)&=t\int_a^b u(x,t)^*H(x)h(x) \, dx\\
&=\int_a^b u(x,t)'^*Jh_0(x) \, dx\\
&=u(b,t)^*Jh_0(b)-u(a,t)^*Jh_0(a)-\int_a^b u(x,t)^*Jh_0'(x) \, dx\\ 
&=u(b,t)^*Jh_0(b)-u(a,t)^*Jh_0(a)+\int_a^b u(x,t)^*H(x)g(x) \, dx .
\end{align*}
Since $u$ and $h_0$ satisfy the boundary condition at $a$, $u(a,t)^*Jh_0(a)=0$. Also,
\begin{align*}
\rho(\{t\})u(b,t)^*Jh_0(b)&=\sum_{k=1}^{M(j)} c_{jk}c_{jk}^*u(b,t_j)^*Jh_0(b)\\
&=\sum_{k=1}^{M(j)} c_{jk}\varphi_{jk}(b)^*Jh_0(b)=0
\end{align*}
since $\varphi_{jk}$ and $h_0$ satisfy the boundary condition at $b$. Hence, $t(Uh)(t)=(Ug)(t)$ in $L^2(\rho)$. The claim about the domains is obvious since
\[
D(\mathcal S^{\alpha, \beta})=\bigg\{ \sum_j\sum_{k=1}^{M(j)} a_{jk} \varphi_{jk} : \sum_j\sum_{k=1}^{M(j)}|a_{jk}|^2(1+ t_j^2) <\infty \bigg\} ,
\]
\[
\overline{D(\mathcal S^{\alpha, \beta})}=\bigg\{ \sum_j\sum_{k=1}^{M(j)} a_{jk} \varphi_{jk} : \sum_j\sum_{k=1}^{M(j)}|a_{jk}|^2<\infty \bigg\},
\]
and for $h=\sum_j\sum_{k=1}^{M(j)} a_{jk} \varphi_{jk} \in \overline{D(\mathcal S^{\alpha, \beta})}$, with $\sum_j\sum_{k=1}^{M(j)}|a_{jk}|^2$, $Uh=\sum_j\sum_{k=1}^{M(j)} a_{jk} U\varphi_{jk} \in D(M_t)$ if and only if $\sum_j M(j)t_j^2 <\infty$.
\end{proof}

\section{Half line canonical systems}
In this section, assume that $H$ is definite on $(a,b)=(0,\infty)$, with $0$ a regular endpoint, and that both deficiency indices are equal to $n$. An equivalent condition, which is used below, to having deficiency indices both equal to $n$ is provided by the following theorem. The theorem is well-known in the order two case \cite{BHS, Rembook}, but it does not appear in the literature for general $n$. Recall that for $(f,g)$ in the maximal relation $\mathcal T$, $f_0$ denotes the unique absolutely continuous representative of $f$ that satisfies $Jf_0'=-Hg$, as introduced in \eqref{maxrel}. This notation is used throughout this section.
\begin{Theorem}
\label{lpcriterion}
Let $H$ be definite on $(0,\infty)$, with $0$ a regular endpoint. Then $H$ has deficiency indices both equal to $n$ if and only if 
\[
\lim_{x\to \infty} f_0(x)^*Jh_0(x)=0
\]
for all $(f,g), (h,k) \in \mathcal T$.
\end{Theorem}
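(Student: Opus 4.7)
\emph{Plan.} For any $(f,g),(h,k)\in\mathcal T$, the pointwise identity
\[
\bigl(f_0(x)^*Jh_0(x)\bigr)' = g(x)^*H(x)h_0(x) - f_0(x)^*H(x)k(x)
\]
has a right-hand side in $L^1(0,\infty)$ by Cauchy-Schwarz in $L_H^2$, so $B((f,g),(h,k)):=\lim_{x\to\infty}f_0(x)^*Jh_0(x)$ exists; I would record this, together with its integrated form $B=\omega_0-\Omega$, where $\omega_0((f,g),(h,k)):=f_0(0)^*Jh_0(0)$ and $\Omega((f,g),(h,k)):=\langle f,k\rangle-\langle g,h\rangle$, as a preliminary lemma.

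For the direction ``vanishing of the limit implies $n_\pm=n$,'' the plan is to apply $B=0$ to $(\psi,i\psi)\in\mathcal T$ for $\psi\in\ker(\mathcal T-i)$. The identity $B=\omega_0-\Omega$ collapses to $\psi(0)^*J\psi(0)=2i\|\psi\|^2$, so the injective map $\psi\mapsto\psi(0)$ lands in a subspace of $\C^{2n}$ on which $v\mapsto v^*(-iJ)v$ is strictly positive. Since $-iJ$ has signature $(n,n)$, this forces $n_+\leq n$; the lower bound $n_+\geq n$ is standard for half-line canonical systems with a regular endpoint, obtained via Weyl limits of the regular problems of Section 3. So $n_+=n$, and $n_-=n$ symmetrically.

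For the reverse direction, the first step is to show $B(\mathcal T_0,\mathcal T)=0$. Given $n_\pm=n$, each admissible $\alpha$ yields a self-adjoint $\mathcal S^\alpha\supset\mathcal T_0$, so every $(f,g)\in\mathcal T_0$ satisfies $\alpha f_0(0)=0$ for every admissible $\alpha$. Because every Lagrangian subspace of $(\C^{2n},J)$ arises as $\ker\alpha$ for some admissible $\alpha$ and two complementary Lagrangians (e.g.\ the spans of the first and last $n$ standard basis vectors) intersect trivially, this forces $f_0(0)=0$; together with $\Omega(\mathcal T_0,\mathcal T)=0$ (from $\mathcal T_0=\mathcal T^*$), the identity $B=\omega_0-\Omega$ completes this step. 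Via the von Neumann decomposition $\mathcal T=\mathcal T_0\dotplus\mathcal D_+\dotplus\mathcal D_-$ with $\mathcal D_\pm:=\ker(\mathcal T\mp i)$ and sesquilinearity of $B$, this reduces the task to showing $B$ vanishes on the blocks $\mathcal D_+\times\mathcal D_+$, $\mathcal D_-\times\mathcal D_-$, and $\mathcal D_+\times\mathcal D_-$.

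The cross block is handled by the direct computation $(\psi_+^*J\phi_-)'=0$, which gives $B(\psi_+,\phi_-)=\psi_+(0)^*J\phi_-(0)$; this vanishes by a symplectic matrix identity once $\mathcal D_\pm$ are identified with the columns of the Weyl solutions $f_m=v+um(i)$ and $f_{m^*}=v+um(-i)$, the key algebraic fact being $W(0)^*JW(0)=J$ for $W=(u\:v)$, which follows from \eqref{bc}. The diagonal identity $\psi(0)^*J\phi(0)=2i\langle\psi,\phi\rangle$ on $\mathcal D_+$ is the main obstacle, being equivalent to the Weyl formula $\Im(m(i))=\int_0^\infty f_m^*Hf_m\,dx$. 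My plan is to approximate by the regular problems on $(0,b)$, for which $\Im(m^{(b)}(i))=\int_0^b(f_m^{(b)})^*Hf_m^{(b)}\,dx$ is immediate from Theorem \ref{regm}, and to pass $b\to\infty$ via a nested-disk argument. The hypothesis $n_\pm=n$ is used precisely here: it forces $m^{(b)}(i)$ to converge to a single matrix $m(i)$ (limit point, as opposed to limit circle) along which the integral identity is inherited and whose Weyl solution spans $\mathcal D_+$.
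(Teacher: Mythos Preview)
Your proposal is viable but takes a route quite different from the paper's. The paper proves both directions by a reflection trick: it doubles the half line to a whole line by setting $H_N(x)=NH(-x)N$ on $(-\infty,0)$ with $N=\begin{pmatrix}-I&0\\0&I\end{pmatrix}$, forms the whole-line system $\tilde H$, and then invokes the literature. For the forward direction, $n_\pm(H)=n$ forces $n_\pm(\tilde H)=0$ by \cite[Proposition~5.4]{Lesch}, so $\mathcal T_{\tilde H}$ is self-adjoint and the vanishing of the boundary form at $\infty$ follows from the characterization of the minimal relation in \cite[Proposition~4.10]{BHSW}. For the converse, one glues $H$ to any definite half-line $G$ with $n_\pm=n$, uses the hypothesis together with the already-proved forward direction to show $\mathcal T_{\tilde H}=\mathcal T_{\tilde H}^*$, and then reads off $n_\pm(H)=n$ from \cite{BHSW,Lesch}. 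The argument is short precisely because the heavy lifting is outsourced.

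Your approach works directly with the boundary form $B=\omega_0-\Omega$. The signature argument for ``$B\equiv0\Rightarrow n_\pm\le n$'' is clean and self-contained, and the lower bound $n_\pm\ge n$ is indeed standard. For the other direction, however, your plan to handle the diagonal and cross blocks of the von Neumann decomposition by nested-disk Weyl theory is considerably heavier than the paper's argument, and it reverses the paper's logical order: the facts you need (existence of $m(\pm i)$ with $m(-i)=m(i)^*$ and the identity $\Im m(i)=\int_0^\infty f_m^*Hf_m$) are exactly what the paper establishes in Theorem~\ref{lpm} \emph{using} Theorem~\ref{lpcriterion}. This is not circular as a plan, since you propose to obtain them independently via limits of the regular $m^{(b)}$, but it is a substantial detour, and the paper explicitly remarks (after Theorem~\ref{lprep}) that the nested-disk route has technical wrinkles for general $n$ under mere definiteness on $(0,\infty)$.

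There is also a much shorter way to finish your forward direction that avoids Weyl theory entirely. You already show that $(f,g)\in\mathcal T_0$ forces $f_0(0)=0$; combined with the surjectivity of $(f,g)\mapsto f_0(0)$ onto $\C^{2n}$ \cite[Corollary~2.16]{BHSW} and the dimension count $\dim(\mathcal T/\mathcal T_0)=n_++n_-=2n$, this gives $\mathcal T_0=\{(f,g)\in\mathcal T:f_0(0)=0\}$. Now any $(h,k)\in\mathcal T$ decomposes as $(h,k)=(f,g)+(h-f,k-g)$ where $(f,g)\in\mathcal T$ has $f_0(0)=h_0(0)$ and $f_0\equiv0$ on $[L,\infty)$ (constructed on a definite interval $(0,L)$ via \cite[Corollary~2.16]{BHSW} and extended by zero), so $(h-f,k-g)\in\mathcal T_0$. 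Then $B((h,k),\cdot)=B((f,g),\cdot)+B((h-f,k-g),\cdot)$ vanishes termwise: the first because $f_0$ is eventually zero, the second by your Step~1. This replaces the entire von Neumann and nested-disk machinery.
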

\begin{proof}
Suppose $H$ has both deficiency indices equal to $n$. Let $N=\begin{pmatrix} -I & 0 \\ 0 & I \end{pmatrix}$ and $H_N(x)=NH(-x)N$ for $x\in (-\infty,0)$. A quick calculation shows that $H_N\ge 0$ and that $u$ solves $Ju'=-Hv$ if and only if $y(x)=Nu(-x)$ solves $Jy'=-H_Nw$ where $w(x)=Nv(-x)$. Let
\[
\tilde H(x) =\begin{cases} H_N(x) & x < 0\\ H(x) & x> 0 \end{cases} .
\]
Consider the whole line canonical system $\tilde H$ on $(a,b)=(-\infty,\infty)$, whose maximal relation $\mathcal T_{\tilde H}$ is defined, as always, by \eqref{maxrel}. So, by the above observation about solutions, $(f,g),(h,k) \in \mathcal T_H$ extend in the obvious way to $(\tilde f,\tilde g),(\tilde h,\tilde k) \in \mathcal T_{\tilde H}$. By \cite[Proposition 5.4]{Lesch}, $\tilde H$ has both deficiency indices equal to $0$. Thus, $\mathcal T_{\tilde H} = {\mathcal T_{\tilde H}}^*$. So, $(\tilde f,\tilde g),(\tilde h,\tilde k)$ are in the minimal relation ${\mathcal T_{\tilde H}}^*$. It follows that $\lim_{x\to \infty} \tilde f_0(x)^*J\tilde h_0(x)=0$ by \cite[Proposition 4.10]{BHSW}. Hence, $\lim_{x\to \infty} f_0(x)^*Jh_0(x) =0$. 

Conversely, suppose that $\lim_{x\to \infty} f_0(x)^*Jh_0(x)=0$ for all $(f,g), (h,k) \in \mathcal T_H$. Take any definite canonical system $G$ on $(-\infty, 0)$ such that $0$ is a regular endpoint and both deficiency indices are equal to $n$ (one could take $G(x)=I$). Let
\[
\tilde H(x) =\begin{cases} G(x) & x < 0\\ H(x) & x> 0 \end{cases} .
\]
Again, consider the whole line canonical system $\tilde H$ with its maximal relation $\mathcal T_{\tilde H}$ defined by \eqref{maxrel}. Suppose $(\tilde f,\tilde g),(\tilde h,\tilde k) \in \mathcal T_{\tilde H}$. Then their obvious restrictions are in $\mathcal T_G$ and $\mathcal T_H$. So, by the first step, $\lim_{x\to -\infty} \tilde f_0(x)^*J\tilde h_0(x)=0$. By assumption, $\lim_{x\to \infty} \tilde f_0(x)^*J\tilde h_0(x)=0$. So, by \cite[Proposition 4.10]{BHSW}, $(\tilde f,\tilde g)$ is in the minimal relation ${\mathcal T_{\tilde H}}^*$. Hence, $\mathcal T_{\tilde H} \subset {\mathcal T_{\tilde H}}^*$. So, since the minimal relation ${\mathcal T_{\tilde H}}^*$ is also a subset of the maximal relation $\mathcal T_{\tilde H}$, $\mathcal T_{\tilde H} = {\mathcal T_{\tilde H}}^*$. Hence, there are $n$ linearly independent $L_{\tilde H}^2(0,\infty)=L_H^2(0,\infty)$ solutions of $Ju'=-iHu$, and likewise for $Ju'=iHu$, by \cite[Equation 3.15, Equation 3.21, Corollary 4.20]{BHSW}. So, since $H$ is definite on $(0,\infty)$, $H$ has deficiency indices both equal to $n$ by \cite[Proposition 2.19]{Lesch}.   
\end{proof}  

In the rest of this section, fix a boundary condition $\alpha \in \C^{n\times 2n}$ at $0$ with the properties given in \eqref{bc}. The self-adjoint relation, defined as in \eqref{lpcs}, generated by this boundary condition $\alpha$ is denoted $\mathcal S$. As in the previous section, $u$ and $v$ denote the solutions of $\eqref{can}$ with 
\[
u(0,z)=-J\alpha^* , \quad v(0,z)=\alpha^*.
\]
\begin{Theorem}
\label{lpm}
For every $z \in \C/\R$, there exists a unique matrix $m(z)\in  \C^{n\times n}$ such that the columns of $f_m(x,z)=v(x,z)+u(x,z)m(z)$ are in $L_H^2(0,\infty)$. Moreover, $\Im(m(z))>0$ for $z\in \C^+$ and $m(\overline z)=m(z)^*$. 
\end{Theorem}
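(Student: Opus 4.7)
The plan is to identify the $n$-dimensional space of $L_H^2(0,\infty)$ solutions of $Ju'=-zHu$ and coordinatize it by the ``$v$-component'' of each solution. The deficiency index hypothesis directly gives $\dim\ker(\mathcal T-z)=n$ for $z=\pm i$, and this dimension is constant on each connected component of $\C\setminus\R$ by the standard theory of symmetric relations, so $\dim\ker(\mathcal T-z)=n$ for every $z\in\C\setminus\R$. Separately, since $\alpha\alpha^*=I$ and $\alpha J\alpha^*=0$, the columns of $v(0,z)=\alpha^*$ and $u(0,z)=-J\alpha^*$ span complementary $n$-dimensional subspaces of $\C^{2n}$, and hence every solution of $Ju'=-zHu$ decomposes uniquely as $f=v(\cdot,z)c_1+u(\cdot,z)c_2$ with $c_1,c_2\in\C^n$. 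The claim that the columns of $f_m=v+u\,m(z)$ exhaust the $L_H^2$ solution space is then equivalent to the projection $f\mapsto c_1$ being a bijection from the $L_H^2$ solution space onto $\C^n$.

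For injectivity, the kernel of this projection consists of $L_H^2$ solutions of the form $u(\cdot,z)c$. Any such solution automatically satisfies $\alpha u(0,z)c=-\alpha J\alpha^* c=0$, so if it is nonzero in $L_H^2$ it is an eigenvector of the self-adjoint relation $\mathcal S$ with non-real eigenvalue $z$, a contradiction. That $u(\cdot,z)c$ is nonzero in $L_H^2$ whenever $c\neq 0$ comes from definiteness of $H$: $u(0,z)c=-J\alpha^*c\neq 0$ makes the solution nontrivial, and definiteness on $(0,\infty)$ forces $\int_0^\infty (uc)^*H(uc)>0$. A dimension count upgrades injectivity to bijectivity, producing the unique linear map $c\mapsto m(z)c$.

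The two analytic properties follow the template of Theorem \ref{regm}. A direct calculation using $\alpha\alpha^*=I$, $\alpha J\alpha^*=0$, $J^*=-J$, and $J^2=-I$ gives
\[
f_m(0,z)^*Jf_m(0,z)=m(z)-m(z)^*, \qquad f_m(0,z)^*Jf_m(0,\overline z)=m(\overline z)-m(z)^*.
\]
Integrating the Lagrange identity $(f_m(\cdot,z)^*Jf_m(\cdot,z))'=-2i\,\Im(z)\,f_m^*Hf_m$ from $0$ to $x$ and letting $x\to\infty$, where the boundary term at infinity vanishes entry-by-entry by Theorem \ref{lpcriterion} (each column of $f_m(\cdot,z)$ together with its $\mathcal T$-partner lies in $\mathcal T$), yields $\Im(m(z))=\Im(z)\int_0^\infty f_m(\cdot,z)^*Hf_m(\cdot,z)$. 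Strict positive-definiteness of this integral matrix (so that $\Im(m(z))>0$ rather than merely $\geq 0$) follows from definiteness of $H$ together with $f_m(0,z)c=\alpha^*c-J\alpha^*m(z)c\neq 0$ for $c\neq 0$. Analogously, $(f_m(\cdot,z)^*Jf_m(\cdot,\overline z))'\equiv 0$ makes the expression constant in $x$, equal to $m(\overline z)-m(z)^*$ at $0$ and to $0$ at infinity by Theorem \ref{lpcriterion}, so $m(\overline z)=m(z)^*$. The only real obstacle is excluding $L_H^2$ solutions in the column span of $u(\cdot,z)$, which weds self-adjointness of $\mathcal S$ to definiteness of $H$; once that step is in place, the rest is a direct parallel of the regular case.
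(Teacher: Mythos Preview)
Your proof is correct and follows essentially the same route as the paper's. The only cosmetic difference is that the paper writes a basis of the $L_H^2$ solution space as $u\,a(z)+v\,b(z)$ and shows $b(z)$ is invertible (citing \cite[Proposition~2.19]{Lesch} for the existence of $n$ linearly independent $L_H^2$ solutions), whereas you invoke constancy of deficiency indices on half-planes and phrase the same invertibility as injectivity of the projection $f\mapsto c_1$; the core argument---that an $L_H^2$ solution of the form $u(\cdot,z)c$ would be a non-real eigenvector of $\mathcal S$ and hence must vanish by definiteness---is identical, as is the treatment of $\Im m(z)>0$ and $m(\overline z)=m(z)^*$ via the Lagrange identity and Theorem~\ref{lpcriterion}.
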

\begin{proof}
Let $z \in \C/\R$. Since $H$ is definite on $(0,\infty)$ and the deficiency indices are $n$, there exist $n$ linearly independent solutions of \eqref{can} in $L_H^2(0,\infty)$ by \cite[Proposition 2.19]{Lesch}. Hence, there exist $a(z),b(z)\in \C^{n\times n}$ such that $u(x,z)a(z)+v(x,z)b(z)$ has rank $n$ and all of its columns in $L_H^2(0,\infty)$. I claim $b(z)$ is invertible. Suppose $c \in \ker(b(z))$. Then $u(x,z)a(z)c$ is in $L_H^2(0,\infty)$, solves \eqref{can}, and satisfies the boundary condition $\alpha$ at $0$. Since $z\in \C/\R$ cannot be an eigenvalue of the self-adjoint relation $\mathcal S$, this implies that $u(x,z)a(z)c$ represents the zero element of $L_H^2(0,\infty)$. Since $H$ is definite on $(0,\infty)$ and $u(x,z)a(z)c$ is a solution of \eqref{can}, it follows that $a(z)c=0$. Thus, $c \in \ker(u(x,z)a(z)+v(x,z)b(z))$. So, since $u(x,z)a(z)+v(x,z)b(z) \in \C^{2n\times n}$ has rank $n$, $c=0$. Hence, $b(z)$ is invertible and, by defining $m(z)=a(z)b(z)^{-1}$, one obtains that $f_m(x,z)=v(x,z)+u(x,z)m(z)$ has all of its columns $L_H^2(0,\infty)$.

To show that $m(z)$ is unique, fix $z \in \C/\R$, and suppose that $\tilde m\in  \C^{n\times n}$ is such that the columns of $v(x,z)+u(x,z)\tilde m$ are in $L_H^2(0,\infty)$. Then the columns of $u(x,z)(m(z)-\tilde m)$ are in $L_H^2(0,\infty)$, and they satisfy the boundary condition $\alpha$ at $0$. Since $z \in \C/\R$ cannot be an eigenvalue of the self-adjoint relation $\mathcal S$, and $H$ is definite on $(0,\infty)$, this implies that $m(z)-\tilde m=0$.

We now show that $\Im(m(z))>0$ for $z\in \C^+$. Since $f_m$ is a solution of \eqref{can}, $(f_m^*Jf_m)'=-2i\Im(z)f_m^*Hf_m$. Thus, 
{%
\small
\[
f_m(b,z)^*Jf_m(b,z)-f_m(0,z)^*Jf_m(0,z)=-2i\Im(z) \int_0^b f_m(x,z)^*H(x)f_m(x,z) \, dx 
\]
}%
for all $b\in (0,\infty)$. Note that each column $f_j$ of $f_m$ is absolutely continuous and solves \eqref{can}. So, $(f_j, zf_j) \in \mathcal T$, the maximal relation defined by \eqref{maxrel}, and ${f_j}_0=f_j$. So, by Theorem \ref{lpcriterion}, $\lim_{b\to \infty} f_m(b,z)^*Jf_m(b,z)=0$. A calculation shows that $f_m(0,z)^*Jf_m(0,z)=m(z)-m(z)^*$. So, for $z\in \C^+$, 
\[
\Im(m(z))=\Im(z) \int_0^\infty f_m(x,z)^*H(x)f_m(x,z) \, dx>0
\]
since $H$ is definite on $(0,\infty)$.

To prove the last equation in the theorem, note that, as in the proof of Theorem \ref{regm},  $f_m(x,z)^*Jf_m(x,\overline z)$ is a constant function of $x$. By the same argument as above, $\lim_{b\to \infty} f_m(b,z)^*Jf_m(b,\overline z)=0$. Hence, 
\[
0=f_m(0,z)^*Jf_m(0,\overline z)=m(\overline z)-m(z)^* .
\]
\end{proof}

\begin{Theorem}
\label{lpres}
Define
\[
G(x,y,z) = \begin{cases} u(x,z)f_m(y,\overline z)^* & x\leq y \\ f_m(x,z)u(y,\overline z)^* & x>y \end{cases}
\]
for $z \in \C / \R$. Then 
\[
((\mathcal S -z)^{-1}h)(x)= \int_0^\infty G(x,y,z)H(y)h(y) \, dy
\]
for all $h\in L_H^2(0,\infty)$. Moreover, $g(x)=\int_0^\infty G(x,y,z)H(y)h(y) \, dy$ is the unique absolutely continuous representative of $(\mathcal S - z)^{-1}h$ that solves $Jg'=-zHg-Hh$.
\end{Theorem}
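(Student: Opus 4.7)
The plan is to mirror the proof of Theorem \ref{regres}, but to address two new difficulties created by the unbounded interval: the integral defining $g(x)$ need not converge absolutely for arbitrary $h\in L_H^2(0,\infty)$, and membership $g\in L_H^2(0,\infty)$ is no longer automatic. I would first establish the result for $h$ with compact support and then use density and the boundedness of the resolvent to handle general $h$.

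For $h$ supported in $(0,R)$, I would write
\[
g(x)=f_m(x,z)\,A(x)+u(x,z)\,B(x), \quad A(x)=\int_0^x u(y,\overline z)^*H(y)h(y)\,dy,\quad B(x)=\int_x^\infty f_m(y,\overline z)^*H(y)h(y)\,dy,
\]
so that both integrands are compactly supported and the expression is unambiguous and absolutely continuous. The calculation from Theorem \ref{regres} goes through verbatim: using that $W(x,z)^*JW(x,\overline z)$ is constant in $x$ and equals $J$ (evaluated at $0$ via $u(0,z)=-J\alpha^*,\,v(0,z)=\alpha^*$, together with \eqref{bc}), one gets $Jg'=-zHg-Hh$. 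The boundary condition is verified from $g(0)=u(0,z)B(0)=-J\alpha^* B(0)$ and $\alpha J\alpha^*=0$. The new ingredient is $g\in L_H^2(0,\infty)$: for $x\ge R$ we have $A(x)=c$ constant and $B(x)=0$, so $g(x)=f_m(x,z)c$, which lies in $L_H^2(R,\infty)$ by Theorem \ref{lpm}; on $(0,R)$, $g$ is bounded and $H\in L^1(0,R)$, so $g\in L_H^2(0,R)$. Thus $(g,zg+h)\in\mathcal S$ and $g=(\mathcal S-z)^{-1}h$ for every compactly supported $h$.

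To extend to arbitrary $h\in L_H^2(0,\infty)$, let $h_n=h\chi_{(0,n)}$, which converges to $h$ in $L_H^2$; let $g_n$ be the integral and $g_\infty=(\mathcal S-z)^{-1}h$, so $g_n\to g_\infty$ in $L_H^2$ by boundedness of the resolvent. For fixed $x$ and all $n\ge x$, $A_n(x)=A(x)$ is already constant in $n$, while $B_n(x)=\int_x^n f_m(y,\overline z)^*H(y)h(y)\,dy\to B(x)$ as $n\to\infty$ (the limit existing by Cauchy--Schwarz in $L_H^2$ applied to the columns of $f_m(\cdot,\overline z)$ and $h$). Hence $g_n(x)\to g(x)$ pointwise in $(0,\infty)$, where $g$ is defined by the integral formula. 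Passing to a subsequence along which $H^{1/2}(g_n-g_\infty)\to 0$ a.e.\ identifies $g$ and $g_\infty$ as the same element of $L_H^2$, proving the main formula.

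Finally, the moreover claim follows from the definitional uniqueness of the absolutely continuous representative $(\mathcal S-z)^{-1}h)_0$ noted after \eqref{maxrel}: the function $g$ built from the integral formula is absolutely continuous, lies in $L_H^2$, and satisfies $Jg'=-zHg-Hh$, so it must coincide with that unique representative. The main obstacle throughout is the $L_H^2$ membership at infinity; the compact-support trick, exploiting that outside the support $g$ collapses to a linear combination of columns of $f_m$, is what makes this step go through, and it is strictly a half-line phenomenon with no analogue in the regular case.
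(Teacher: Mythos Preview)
Your proposal is correct and follows essentially the same route as the paper: verify the formula for compactly supported $h$ by the computation from Theorem~\ref{regres} together with the observation that $g(x)=f_m(x,z)c$ for large $x$, then pass to general $h$ by density, boundedness of the resolvent, pointwise convergence of the integral (via Cauchy--Schwarz, since the columns of $f_m(\cdot,\overline z)$ lie in $L_H^2$), and extraction of an almost-everywhere convergent subsequence. One small point: in your final paragraph you assert that the integral $g$ is absolutely continuous and solves $Jg'=-zHg-Hh$ for \emph{general} $h$, but you only verified this for compactly supported $h$; the paper states up front that the Theorem~\ref{regres} computation applies to arbitrary $h$ (the decomposition $g=f_mA+uB$ still makes sense, with both integrals convergent by Cauchy--Schwarz), and you should say the same before invoking the uniqueness of the absolutely continuous representative.
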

\begin{remark}
The last claim in the theorem is also true in the situation of Theorem \ref{regres} and is essentially trivial to prove, but it is included here for convenience since it is used later in this section.
\end{remark}
\begin{proof}
The same argument as in the proof of Theorem \ref{regres} shows that $g$ is absolutely continuous, that is solves $Jg'=-zHg-Hh$, and that it satisfies the boundary condition $\alpha$ at $0$. Moreover, if $h$ has compact support, say in $[0,L]$, then $g(x)=f_m(x,z)\int_0^L u(y,\overline z)^*H(y)h(y) \, dy$ for $x\ge L$. Hence, for $h$ with compact support, $g \in L_H^2(0,\infty)$, and thus $(g,h) \in \mathcal S-z$. So, $(\mathcal S-z)^{-1}h=g$ for $h$ with compact support.

Now, fix any $h\in L_H^2(0,\infty)$, and take $h_j$ with compact support such that $h_j \to h$ in $L_H^2(0,\infty)$. For fixed $x$, the rows of $G(x, \cdot, z)$ are adjoints of elements of $L_H^2(0,\infty)$. Hence,  
\[
\lim_{j\to \infty} \int_0^\infty G(x,y,z)H(y)h_j(y) \, dy = \int_0^\infty G(x,y,z)H(y)h(y) \, dy
\]
pointwise. Since $\lim_{j\to \infty} (\mathcal S-z)^{-1}h_j=(\mathcal S-z)^{-1}h$ in $L_H^2(0,\infty)$, there exists a subsequence such that
\[
H(x)\lim_{k\to \infty} ((\mathcal S-z)^{-1}h_{j_k})(x)= H(x)((\mathcal S-z)^{-1}h)(x)
\]
almost everywhere. Thus, for almost every $x\in (0,\infty)$,
\[
H(x)((\mathcal S-z)^{-1}h)(x)=H(x)\int_0^\infty G(x,y,z)H(y)h(y) \, dy=H(x)g(x) .
\]
So, $(\mathcal S-z)^{-1}h=g$ as elements of $L_H^2(0,\infty)$.

Finally, to verify the last claim in the theorem, suppose that $k(x)$ is absolutely continuous, $H(x)k(x)=H(x)g(x)$ almost everywhere, and $Jk'=-zHk-Hh$. Then $u=k-g$ solves $Ju'=-zHu$ and $H(x)u(x)=0$ almost everywhere. Since $H$ is definite on $(0,\infty)$, this implies that $u(x)=0$ for all $x>0$. Thus, $k(x)=g(x)$ for all $x>0$.
\end{proof}

\begin{Theorem}
\label{mhol}
The function $m(z)$ from Theorem \ref{lpm} is holomorphic on $\C/\R$.
\end{Theorem}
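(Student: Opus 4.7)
Fix $z_0 \in \C/\R$; I will show that $m$ is holomorphic in a neighborhood of $z_0$. The plan is to extract $f_m(\cdot,z)$ from the resolvent formula of Theorem \ref{lpres} by applying it to well-chosen compactly supported test functions, and then recover $m(z) = \alpha J f_m(0,z)$, exploiting the strong $L_H^2$-holomorphy of the resolvent.

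Choose $L>0$ so that $H$ is definite on $(0,L)$; this is possible because \cite[Proposition 2.11]{BHSW} supplies a bounded definiteness interval and definiteness is inherited by supersets. Pick $h_1,\dots,h_n \in L_H^2$ with compact support in $[0,L]$ such that
\[
\mathbf C(z_0) := \left( \int_0^L u(y, \overline{z_0})^* H(y) h_j(y)\, dy \right)_{j=1}^n \in \C^{n\times n}
\]
is invertible; such a choice exists because the adjoint of the linear map $h \mapsto \int_0^L u(y,\overline{z_0})^* H(y) h(y)\,dy$ on $L_H^2(0,L)$ is $d \mapsto u(\cdot,\overline{z_0}) d$, which is injective (for $d\neq 0$ the function $u(\cdot,\overline{z_0})d$ is a nontrivial solution with nonzero initial value $-J\alpha^* d$, hence not $H$-null on $(0,L)$ by definiteness). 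By Theorem \ref{lpres}, for $x>L$ the unique a.c.\ representative $\tilde g_j(x,z)$ of $(\mathcal S-z)^{-1}h_j$ satisfies $\tilde g_j(x,z) = f_m(x,z) [\mathbf C(z)]_{\cdot,j}$ with $\mathbf C(z)$ entire in $z$. Packaging the columns into $\tilde{\mathbf G}(x,z) := (\tilde g_1,\dots,\tilde g_n)(x,z)$, one has $\tilde{\mathbf G}(x,z) = f_m(x,z)\mathbf C(z)$ for $x>L$.

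Granting pointwise holomorphy in $z$ of $\tilde{\mathbf G}(x_1,z)$ for some fixed $x_1>L$ (discussed below), $\mathbf C(z)$ is invertible in a neighborhood of $z_0$ (being entire with $\mathbf C(z_0)$ invertible), so $f_m(x_1,z) = \tilde{\mathbf G}(x_1,z)\mathbf C(z)^{-1}$ is holomorphic there. The ODE $Jf_m'=-zHf_m$ and the entire-in-$z$ fundamental matrix $\Phi$ of the canonical system give $f_m(0,z) = \Phi(0,x_1,z) f_m(x_1,z)$ holomorphic. The identities $\alpha J u(0,z) = -\alpha J^2\alpha^* = \alpha\alpha^* = I$ and $\alpha J v(0,z) = \alpha J\alpha^* = 0$, both immediate from \eqref{bc}, then yield $m(z) = \alpha J f_m(0,z)$, holomorphic near $z_0$.

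The main obstacle is the pointwise holomorphy of $\tilde g_j(x_1,z)$ in $z$: the resolvent is holomorphic in $L_H^2$ norm, and one needs to upgrade this to pointwise holomorphy of the a.c.\ representative at $x_1$. I would address this by comparing, on $(0,L')$ for some $L'>L$, with the regular canonical system carrying an auxiliary boundary condition $\beta$ at $L'$. By Theorem \ref{regm} the associated regular $m$-function is meromorphic on $\C$ with only real poles (the eigenvalues of the regular self-adjoint problem), hence holomorphic on $\C/\R$; Theorem \ref{regres} then delivers a resolvent kernel that is pointwise holomorphic in $z$ on $\C/\R$. The difference between the half-line and regular a.c.\ representatives of $(\cdot-z)^{-1}h_j$ on $(0,L')$ solves the homogeneous canonical system and vanishes under $\alpha$ at $0$, so equals $u(\cdot,z) c_j(z)$ for some $c_j(z)\in\C^n$. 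The condition that the half-line a.c.\ representative continues to an $L_H^2(0,\infty)$ element at $x=L'$ pins down $c_j(z)$; tracking this dependence through Theorem \ref{regres} yields holomorphy of $c_j(z)$ near $z_0$, and hence of $\tilde g_j(x_1,z)$, completing the argument.
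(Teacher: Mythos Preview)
Your overall plan---extract $f_m(x_1,z)$ from the resolvent applied to compactly supported $h_j$, invert $\mathbf C(z)$, propagate back to $x=0$---is sound and close in spirit to the paper's argument, and the identity $m(z)=\alpha J f_m(0,z)$ is correct. The genuine problem is in your last paragraph, where you try to upgrade norm-holomorphy of the resolvent to pointwise holomorphy of the absolutely continuous representative $\tilde g_j(x_1,z)$.

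Your proposed mechanism is circular. Writing $\tilde g_j = g_j^{\mathrm{reg}} + u(\cdot,z)c_j(z)$ on $(0,L')$ is fine, and $g_j^{\mathrm{reg}}$ is indeed pointwise holomorphic by Theorem~\ref{regres} and Theorem~\ref{regm}. But the condition you invoke to ``pin down $c_j(z)$''---that $\tilde g_j$ continues past $L'$ as an $L_H^2(L',\infty)$ solution---says precisely that $\tilde g_j(L',z)$ lies in the column span of $f_m(L',z)=v(L',z)+u(L',z)m(z)$. Working this out (using that for $x>L$ both $\tilde g_j$ and $g_j^{\mathrm{reg}}$ equal $f_m$ and $f_{m^{\mathrm{reg}}}$ times the same factor $[\mathbf C(z)]_{\cdot,j}$) gives exactly $c_j(z)=(m(z)-m^{\mathrm{reg}}(z))[\mathbf C(z)]_{\cdot,j}$. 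So holomorphy of $c_j$ is \emph{equivalent} to holomorphy of $m$, and nothing in Theorem~\ref{regres} breaks this equivalence: you have re-expressed the unknown, not eliminated it.

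What does work, and what the paper does, is to stay on the weak-holomorphy side and pair against elements of the maximal relation rather than trying to match at $L'$. Since $\langle k,(\mathcal S-z)^{-1}h\rangle$ is holomorphic for every $k\in L_H^2$, an integration by parts against any $(k,l)\in\mathcal T$ (using Theorem~\ref{lpcriterion} to kill the boundary term at infinity) shows that $k_0(0)^*J g(0,z)$ is holomorphic; since $k_0(0)\in\C^{2n}$ is arbitrary, $g(0,z)$ itself is holomorphic. From $g(0,z)=-J\alpha^*\int_0^\infty f_m(x,\overline z)^*H(x)h(x)\,dx$ and surjectivity of $h\mapsto\int_0^b u(x,\overline z)^*H(x)h(x)\,dx$ on a definiteness interval, $m(z)$ is then read off. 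If you prefer to keep your regular-problem comparison, the non-circular fix is to exploit norm-holomorphy of \emph{both} resolvents on $L_H^2(0,L')$: then $z\mapsto u(\cdot,z)c_j(z)$ is $L_H^2(0,L')$-holomorphic, and pairing with a fixed finite family $\phi_1,\dots,\phi_n\in L_H^2(0,L')$ chosen so that $(\langle\phi_k,u(\cdot,z_0)e_l\rangle)_{k,l}$ is invertible recovers $c_j(z)$ holomorphically near $z_0$. Either route closes the gap; the matching-at-$L'$ condition you wrote does not.
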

\begin{remark}
Note that $u(x,z)$ and $v(x,z)$ are holomorphic by the basic theory of ordinary differential equations since they solve \eqref{can} and their initial values are constant functions of $z$. The function $f_m(x,z)$ is also a solution of \eqref{can}, but its initial value is $v(0,z)+u(0,z)m(z)$, and it is not yet known, before doing the proof below, whether $m(z)$ is holomorphic. So, one cannot use the same basic theory of ordinary differential equations here to say that $f_m(x,z)$ is holomorphic and, hence, $m(z)$ is holomorphic; that would be circular reasoning. It also does not follow from ordinary differential equation theory that $f_m(x,z)$ is holomorphic because it is in $L_H^2(0,\infty)$: one could multiply $f_m(x,z)$ by any function of $z$ to obtain another $L_H^2(0,\infty)$ solution, which certainly does not have to be holomorphic.
\end{remark}
\begin{proof}
The structure of the proof is as follows. First, it is shown that $g(0,z)=\int_0^\infty G(0,y,z)H(y)h(y) \, dy$ is holomorphic for any $h\in L_H^2(0,\infty)$ by using the fact that $\langle k, (\mathcal S - z)^{-1}h \rangle$ is holomorphic for any $k\in L_H^2(0,\infty)$ and then choosing a suitable $k$. It is then shown that, since $u(x,z)$, $v(x,z)$, and $g(0,z) = u(0,z) \int_0^\infty \left( v(x,\overline z) + u(x,\overline z)m(\overline z) \right)^*H(x)h(x) \, dx$, for every $h\in L_H^2(0,\infty)$, are holomorphic, $m(z)$ must also be holomorphic.  

Note that $\langle k, (\mathcal S - z)^{-1}h \rangle$ is holomorphic on $\C/\R$ for any $h,k\in L_H^2(0,\infty)$. This is because it holds for $h,k \in \overline{D(\mathcal S)}$ by the functional calculus and Morera's theorem, and if either $h$ or $k$ is in $\mathcal S(0)$, then $\langle h, (\mathcal S - z)^{-1}k \rangle=0$. Fix any $h\in L_H^2(0,\infty)$ and $(k,l)\in \mathcal T$, and let $g(x,z)=\int_0^\infty G(x,y,z)H(y)h(y) \, dy$. So, by Theorem \ref{lpres}, 
\begin{align*}
-z\langle k, (\mathcal S - z)^{-1}h \rangle &= -z\int_0^\infty k_0(x)^*H(x)g(x,z) \, dx\\
&=\int_0^\infty k_0(x)^*\left( Jg'(x,z)+H(x)h(x) \right) \, dx .
\end{align*}
So, $\int_0^\infty k_0(x)^*\left( Jg'(x,z)+H(x)h(x) \right) \, dx$ is holomorphic. Thus, since $k_0$ and $h$ are in $L_H^2(0,\infty)$ and do not depend on $z$, $\int_0^\infty k_0(x)^*Jg'(x,z) \, dx$ is holomorphic. By an integration by parts and Theorem \ref{lpcriterion},
\begin{align*}
\int_0^\infty k_0(x)^*Jg'(x,z) \, dx &=-k_0(0)^*Jg(0,z) - \int_0^\infty k_0(x)'^*Jg(x,z) \, dx \\
&=-k_0(0)^*Jg(0,z) - \int_0^\infty l(x)^*H(x)g(x,z) \, dx
\end{align*}
So, since $\langle l, (\mathcal S - z)^{-1}h \rangle = \int_0^\infty l(x)^*H(x)g(x,z) \, dx$ is holomorphic, the boundary term $-k_0(0)^*Jg(0,z)$ is holomorphic, and this holds for any $(k,l)\in \mathcal T$. For any $c\in \C^{2n}$, there exists $(k,l)\in \mathcal T$ with $k_0(0)=c$ since $H$ is definite on $(0,\infty)$ and $0$ is a regular endpoint \cite[Corollary 2.16]{BHSW}. Hence,  $g(0,z)$ is holomorphic.

Now, 
\begin{align*}
g(0,z) &= u(0,z) \int_0^\infty f_m(x,\overline z)^*H(x)h(x) \, dx\\
&= -J\alpha^* \int_0^\infty \left( v(x,\overline z) + u(x,\overline z)m(\overline z) \right)^*H(x)h(x) \, dx .  
\end{align*}
So, $ \int_0^\infty \left( v(x,\overline z) + u(x,\overline z)m(\overline z) \right)^*H(x)h(x) \, dx$ is holomorphic. Let $(0,b)$ be a bounded interval on which $H(x)$ is definite. Let $h \in L_H^2(0,b)$. Then, since $(0,b)$ is bounded, 
\[
\int_0^b v(x,\overline z)^*H(x)h(x) \, dx + m(\overline z)^*\int_0^b u(x,\overline z)^*H(x)h(x) \, dx
\]
is holomorphic. Since $v(x,\overline z)^*$ is holomorphic and $(0,b)$ is bounded, the integral $\int_0^b v(x,\overline z)^*H(x)h(x) \, dx$ is holomorphic by Morera's theorem. Thus, since $m(\overline z)^*=m(z)$ by Theorem \ref{lpm}, $m(z)\int_0^b u(x,\overline z)^*H(x)h(x) \, dx$ is holomorphic. This is true for any $h \in L_H^2(0,b)$. So, to show that $m(z)$ is holomorphic, it suffices to prove that the map 
\[
h \mapsto \int_0^b u(x,\overline z)^*H(x)h(x) \, dx , \quad L_H^2(0,b) \to \C^n ,
\] 
is surjective. Suppose it is not. Take $c\in \C^n$, $c\neq 0$, orthogonal to the image. Then $c^*\int_0^b u(x,\overline z)^*H(x)h(x) \, dx =0$ for all $h\in L_H^2(0,b)$. Thus, $H(x)u(x,\overline z)c=0$ for almost every $x \in (0,b)$. So, $J(u(x,\overline z)c)'=0$ almost everywhere. Hence, $u(x,\overline z)c$ is some constant $k$. So, since $H(x)k=0$ for almost all $x\in (0,b)$ and $H$ is definite there, $u(x,\overline z)c=k=0$. But the matrix $u(x,\overline z)$ has a left inverse, so $c=0$, a contradiction.  
\end{proof}

The following lemma asserts that, for the self-adjoint relation $\mathcal S$ defined by \eqref{lpcs}, every spectral representation $U: L_H^2(0,\infty) \to \bigoplus_j L^2(\mu_j)$ is a kind of generalized eigenfunction expansion. Note that $\bigoplus_j L^2(\mu_j)$ could be a finite or infinite sum. If the spectral representation is ordered, meaning $\mu_{j+1}\ll \mu_j$ for all $j$, and $\mu_j$ is not the $0$ measure for all $j$, then the lemma asserts that there are at most $n$ measures $\mu_j$. Recall that $H(x)\in \C^{2n\times 2n}$, and the deficiency indices are assumed in this section to equal $n$. An ordered spectral representation always exists \cite[Theorem 8.1]{WMLN}.
\begin{Lemma}
\label{replemma1}
Suppose $U: L_H^2(0,\infty) \to \bigoplus_j L^2(\mu_j)$ is a spectral representation of $\mathcal S$ with each $\mu_j$ a Borel measure on $\R$. Then there exist measurable functions $u_j(x,t)$ such that, for $\mu_j$-almost every $t$, $u_j(x,t)$ is an absolutely continuous function of $x\in (0,\infty)$, $Ju_j'=-tHu_j$, $\alpha u_j(0,t)=0$, and
\[
 (U_jh)(t)=\int_0^\infty u_j(x,t)^*H(x)h(x) \, dx
 \] 
for $h\in L_H^2(0,\infty)$ that have a representative with compact support. If $\mu_{j+1}\ll \mu_j$ and $\mu_j$ is not the $0$ measure for all $j$, then there are $k \le n$ measures $\mu_j$, $1\le j \le k$, and, for all $l \le k$, the solutions $u_j(x,t)$, $1\le j \le l$, are linearly independent for $\mu_l$-almost every $t$.
\end{Lemma}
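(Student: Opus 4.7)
The plan is to extract the generalized eigenfunctions $u_j(x,t)$ from the abstract spectral representation $U$ by combining the intertwining identity $U_j (\mathcal S - z)^{-1} = M_{1/(t-z)} U_j$ with the explicit integral kernel for the resolvent furnished by Theorem \ref{lpres}, and then to verify the differential-equation and boundary-condition properties of $u_j$ by testing the intertwining of $\mathcal S$ against compactly supported elements of $D(\mathcal S)$. Linear independence and the multiplicity bound will follow from surjectivity of $U$ and a dimension count of the constrained solution space.

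For the extraction step I would take any Borel $B \subset \R$ with $\mu_j(B) < \infty$ and set $\psi_{j,B} := U_j^* \chi_B \in L_H^2(0,\infty)$, where $U_j^*$ sends $L^2(\mu_j)$ into $L_H^2$ via the $j$-th summand. The intertwining gives $(\mathcal S - z)^{-1} \psi_{j,B} = U_j^*(\chi_B/(t-z))$, which by Theorem \ref{lpres} admits a canonical AC representative
\[
g_{j,B}(x,z) = \int_0^\infty G(x,y,z) H(y) \psi_{j,B}(y) \, dy
\]
that satisfies the boundary condition at $0$. Viewing $B \mapsto \psi_{j,B}$ as an orthogonally scattered $L_H^2$-valued measure and using the AC regularization above, I would disintegrate to produce a jointly measurable $u_j(x,t) \in \C^{2n}$, AC in $x$ for $\mu_j$-a.e. $t$, with $\psi_{j,B}(x) = \int_B u_j(x,t) \, d\mu_j(t)$ in the pointwise sense. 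The Plancherel pairing $\langle h, \psi_{j,B} \rangle_{L_H^2} = \int_B (U_j h)(t) \, d\mu_j(t)$ then forces the desired representation for compactly supported $h$ and $\mu_j$-a.e. $t$.

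For the equation and boundary condition, take $(h, \mathcal S h) \in \mathcal S$ with $h_0$ compactly supported in $(0,\infty)$. Using $(U_j \mathcal S h)(t) = t(U_j h)(t)$, $H \mathcal S h = -J h_0'$, and the integral representation from the previous step, integration by parts yields
\[
t \int_0^\infty u_j(x,t)^* H(x) h_0(x) \, dx = \int_0^\infty \partial_x u_j(x,t)^* J h_0(x) \, dx
\]
with no boundary contribution. By definiteness of $H$ and density of admissible $h_0$, this gives $\partial_x u_j(\cdot,t)^* J = t u_j(\cdot,t)^* H$, equivalently $J u_j'(\cdot,t) = -t H u_j(\cdot,t)$ in the classical sense (using the AC regularity from Step 1). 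Allowing $h_0(0) \in \ker\alpha$ arbitrary introduces a boundary term $u_j(0,t)^* J h_0(0)$ that must vanish, so $J u_j(0,t) \in \mathrm{range}(\alpha^*)$; combined with $\alpha J \alpha^* = 0$ and $\alpha \alpha^* = I$, this is equivalent to $\alpha u_j(0,t) = 0$.

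Finally, assume the representation is ordered with no $\mu_j$ zero. If for some $l$ and $A$ with $\mu_l(A) > 0$ the family $\{u_j(\cdot,t)\}_{j \le l}$ were linearly dependent, a measurable selection gives $u_l(\cdot,t) = \sum_{j<l} c_j(t) u_j(\cdot,t)$ on $A$, hence $(U_l h)(t) = \sum_{j<l} \overline{c_j(t)} (U_j h)(t)$ on $A$ for all compactly supported $h$; taking $h$ (by surjectivity of $U$) with $U_l h = \chi_A$ and $U_i h = 0$ on $A$ for $i < l$ produces a contradiction. Since the space of solutions of $J u' = -t H u$ subject to $\alpha u(0) = 0$ has dimension $n$ (the full solution space is $2n$-dimensional and $\alpha$ is rank $n$), the linearly independent family $\{u_j(\cdot,t)\}_{j \le k}$ forces $k \le n$. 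The main obstacle is Step 1: the pointwise-in-$(x,t)$ identification of $u_j$ from the purely abstract spectral data. The key enabler is the AC regularization delivered by Theorem \ref{lpres}, which turns the $L_H^2$-valued set function $B \mapsto U_j^* \chi_B$ into a disintegration with $x$-AC fibers.
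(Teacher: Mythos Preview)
Your overall strategy---use the intertwining $U_j(\mathcal S-z)^{-1}=M_{1/(t-z)}U_j$ together with Theorem \ref{lpres} to extract a kernel, then verify the equation and boundary condition by testing against elements of $\mathcal S$, and finally deduce linear independence from surjectivity---matches the paper's. But the key technical step, your Step~1, is different in kind from what the paper does, and as written it is a genuine gap rather than an alternative argument.

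You propose to view $B\mapsto\psi_{j,B}=U_j^*\chi_B$ as an $L_H^2$-valued measure and ``disintegrate'' to obtain $u_j(x,t)$, with the absolute continuity in $x$ inherited from the AC representative $g_{j,B}(\cdot,z)$ of $(\mathcal S-z)^{-1}\psi_{j,B}$. Two problems. First, $\psi_{j,B}$ is only an equivalence class in $L_H^2$, so the formula $\psi_{j,B}(x)=\int_B u_j(x,t)\,d\mu_j(t)$ ``in the pointwise sense'' has no a~priori meaning; you never explain what concrete Radon--Nikodym or disintegration theorem you are invoking, nor why it produces a jointly measurable density. Second, the AC regularization you cite applies to $g_{j,B}$, not to $\psi_{j,B}$; passing AC-in-$x$ through a disintegration in $t$ is exactly the nontrivial point, and you do not indicate how to do it. Your Step~2 then relies on this unproved AC regularity (the integration by parts producing $\partial_x u_j$ presupposes it), so the circularity propagates.

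The paper avoids the disintegration entirely. The observation is that, for each bounded $[a,b]$, the operator $h\mapsto(\mathcal S-z)^{-1}(\chi_{(a,b)}h)$ is Hilbert--Schmidt by Theorem \ref{lpres}, and hence so is $M_{1/(t-z)}U_j\chi_{(a,b)}=U_j(\mathcal S-z)^{-1}\chi_{(a,b)}$. A Hilbert--Schmidt operator from $L_H^2(a,b)$ to $L^2(\mu_j)$ automatically has an $L^2$ integral kernel $K_j^{a,b}(x,t)$; multiplying by $(t-z)$ and gluing over $[l-1,l)$ produces $\tilde u_j(x,t)$ with the correct action on compactly supported $h$. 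At this stage $\tilde u_j(\cdot,t)$ is merely in $L_H^2(0,l)$, not AC. The paper then shows, by pairing against elements $(\tilde f,\tilde g)$ of the \emph{minimal} relation $\mathcal T_0(0,l)$ (which extend by zero to elements of $\mathcal T_0(0,\infty)\subset\mathcal S$), that $(\tilde u_j(\cdot,t)|_{(0,l)},\,t\,\tilde u_j(\cdot,t)|_{(0,l)})\in\mathcal T_0(0,l)^*=\mathcal T(0,l)$ for $\mu_j$-a.e.\ $t$. Only then does the structure theory of definite canonical systems (\cite[Proposition 2.15]{Lesch}) deliver the unique AC representative $u_j^l(x,t)$ solving $Ju'=-tHu$. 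The AC property is thus obtained \emph{after} placing the kernel in the maximal relation, not built into the extraction. If you want to repair your argument, the cleanest fix is to replace the disintegration by this Hilbert--Schmidt kernel construction; alternatively, you could try to carry out the Radon--Nikodym differentiation of $B\mapsto g_{j,B}(x,z)$ for each fixed $x$, but you would then still need the $\mathcal T_0^*=\mathcal T$ duality to upgrade the resulting $L^2$ object to an AC solution.

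Your treatment of the boundary condition and the linear-independence step are essentially the same as the paper's and are fine, modulo one small point: the identity $(U_l h)(t)=\sum_{j<l}\overline{c_j(t)}(U_jh)(t)$ is established only for compactly supported $h$, but the $h$ furnished by surjectivity need not be compactly supported. Both you and the paper gloss over this; it is handled by approximating in $L_H^2$ and using $\mu_l\ll\mu_j$ to pass the $\mu_j$-a.e.\ convergence to $\mu_l$-a.e.
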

\begin{proof}
A summary of the first and most technical part of the proof, obtaining the integral kernel $u_j(x,t)$ for $U_j$, is as follows. By Theorem \ref{lpres}, the resolvent of $\mathcal S$ restricted to any bounded interval is Hilbert-Schmidt. Hence, the restriction of $M_{(t-z)^{-1}}U_j$, where $M_{(t-z)^{-1}}$ is the operator of multiplication by $\frac{1}{t-z}$ in $L^2(\mu_j)$, to any bounded interval is Hilbert-Schmidt, and so it has an associated integral kernel. We will then glue these integral kernels, multiplied by $t-z$, together to obtain an integral kernel for $U_j$. By carefully selecting representatives of this integral kernel when restricted to bounded intervals, we will obtain the solutions $u_j(x,t)$. 

First, we obtain an integral kernel for the restriction of $M_{(t-z)^{-1}}U_j$ to any bounded interval. Let $[a,b] \subset [0,\infty)$ and $z\in \C/\R$. Define $(R^{a,b}h)(x)= ((\mathcal S - z)^{-1}\chi_{(a,b)}h)(x)$ and $(U_j^{a,b}h)(x)= (U_j\chi_{(a,b)}h)(x)$ for $h\in L_H^2(0,\infty)$. By Theorem \ref{lpres}, $R^{a,b}$ is Hilbert-Schmidt. So, since $M_{(t-z)^{-1}}U_j^{a,b}=U_jR^{a,b}$ and $U_j$ is bounded,  
$M_{(t-z)^{-1}}U_j^{a,b}$ is Hilbert-Schmidt. Hence, there exists a measurable function $K_j^{a,b}(x,t)$ such that 
\begin{equation}
\label{HS}
\int_{-\infty}^\infty \int_0^\infty K_j^{a,b}(x,t)^*H(x)K_j^{a,b}(x,t) \, dx \, d\mu_j(t) < \infty
\end{equation}
and
\[
\frac{1}{t-z}(U_j^{a,b}h)(t)=\int_0^\infty K_j^{a,b}(x,t)^*H(x)h(x) \, dx
\]
for all $h\in L_H^2(0,\infty)$ and $\mu_j$-almost every $t$. Note that since $U_j^{a,b}\chi_{(a,b)}h=U_j^{a,b}h$, 
\[
\int_0^\infty K_j^{a,b}(x,t)^*H(x)h(x) \, dx = \int_a^b K_j^{a,b}(x,t)^*H(x)h(x) \, dx 
\]
for all $h\in L_H^2(0,\infty)$ and $\mu_j$-almost every $t$. Suppose $[a,b]\subset [c,d] \subset [0,\infty)$. Then, since $U_j^{a,b}=U_j^{c,d}\chi_{(a,b)}$,
\[
\int_a^b K_j^{a,b}(x,t)^*H(x)h(x) \, dx = \int_a^b K_j^{c,d}(x,t)^*H(x)h(x) \, dx 
\]
for all $h\in L_H^2(0,\infty)$ and $\mu_j$-almost every $t$. 

We now glue these integral kernels, multiplied by $t-z$, together. Define $\tilde u_j(x,t)=(t-z)K_j^{l-1,l}(x,t)$ for $x \in [l-1,l)$, $l \in \N$. Suppose $h\in L_H^2(0,\infty)$ is supported in $[0,l)$ for some $l\in \N$. Then
\begin{align*}
(U_jh)(t)= (U_j^{0,l}h)(t) &= \int_0^\infty (t-z)K_j^{0,l}(x,t)^*H(x)h(x) \, dx\\
 &= \sum_{i=1}^l \int_{i-1}^i (t-z)K_j^{0,l}(x,t)^*H(x)h(x) \, dx\\
 &= \sum_{i=1}^l \int_{i-1}^i (t-z)K_j^{i-1,i}(x,t)^*H(x)h(x) \, dx\\
 &= \sum_{i=1}^l \int_{i-1}^i \tilde u_j(x,t)^*H(x)h(x) \, dx\\
 &= \int_0^\infty \tilde u_j(x,t)^*H(x)h(x) \, dx .
\end{align*}
for $\mu_j$-almost every $t$. Hence, $(U_jh)(t)=\int_0^\infty \tilde u_j(x,t)^*H(x)h(x) \, dx$ $\mu_j$-almost everywhere for all $h\in L_H^2(0,\infty)$ with compact support.

Next, we show that $\tilde u_j(\cdot, t)\big |_{(0,l)}$ is in the maximal relation for the canonical system on any sufficiently large bounded interval $(0,l)$, and then we will choose absolutely continuous representatives of $\tilde u_j(\cdot, t)\big |_{(0,l)}$ that solve the canonical system on their respective intervals. Since $(U_jh)(t) < \infty$ for $\mu_j$-almost every $t$, $\tilde u_j(x,t) \in L_H^2(0,b)$ for every finite $b$ and $\mu_j$-almost every $t$. Take $l\in \N$ such that $H$ is definite on $(0,l)$. Consider the minimal relation $\mathcal T_0(0,l)$ on $(0,l)$. Let $(\tilde f, \tilde g) \in \mathcal T_0(0,l)$. It follows from \cite[Proposition 4.14]{BHSW} that $\tilde f_0(0)=0=\tilde f_0(l)$. Define
\[
(f(x),g(x))=\begin{cases} (\tilde f_0(x),\tilde g(x)) & x < l\\ (0,0) & x\ge l \end{cases} .
\]
Then $(f,g) \in \mathcal T_0(0,\infty)$, again by \cite[Proposition 4.14]{BHSW}. So, since $U_jg=M_tU_jf$, it follows that
\begin{align*}
\langle t\tilde u_j(\cdot, t)\big |_{(0,l)},\tilde f \rangle_{L_H^2(0,l)} &= t(U_jf)(t) \\
&=(U_jg)(t)\\
&=\int_0^\infty \tilde u_j(x,t)^*H(x)g(x) \, dx\\
&= \langle \tilde u_j(\cdot, t)\big |_{(0,l)},\tilde g \rangle_{L_H^2(0,l)}
\end{align*}
for $\mu_j$-almost every $t$. Thus, $(\tilde u_j(\cdot, t)\big |_{(0,l)}, t\tilde u_j(\cdot, t)\big |_{(0,l)})$ is in the maximal relation $\mathcal T(0,l)$ on $(0,l)$ for $\mu_j$-almost every $t$. Hence, for $\mu_j$-almost every $t$, there exists an absolutely continuous function $u_j^l(x,t)$ for $x \in (0,l)$ that is uniquely determined by the properties that $H(x)u_j^l(x,t)=H(x)\tilde u_j(x, t)$ for almost every $x\in (0,l)$ and that $u_j^l$ solves $Ju'=-tH\tilde u_j$ on $(0,l)$. Suppose $p\ge l$. Then, for $\mu_j$-almost every $t$, $H(x)u_j^p(x,t)=H(x)\tilde u_j(x, t)$ for almost every $x\in (0,l)$, and $u_j^p$ is a solution of $Ju'=-tH\tilde u_j$ on $(0,l)$. Hence, $u_j^p(\cdot, t)\big |_{(0,l)}=u_j^l(\cdot, t)$ for $\mu_j$-almost every $t$. 

Thus, we are now ready to define $u_j(x,t)$. The set of $t\in \R$ such that $u_j^p(\cdot, t)$ does not exist for some integer $p\ge l$ is contained in a measurable set, say $N_j$, with $\mu_j(N_j)=0$. Define $u_j(x,t)=u_j^p(x,t)$ if $x \in (0,p)$ and $t \notin N_j$ and $u_j(x,t)=0$ for $t\in N_j$. Then the above argument shows that $u_j$ is well-defined, measurable, and a solution of $Ju_j'=-tHu_j$ for $\mu_j$-almost every $t$. Also, since $H(x)u_j(x,t)=H(x)\tilde u_j(x,t)$ for almost every $x\in (0,\infty)$ and $\mu_j$-almost every $t$, $(U_jh)(t)=\int_0^\infty u_j(x,t)^*H(x)h(x) \, dx$ $\mu_j$-almost everywhere for all $h\in L_H^2(0,\infty)$ with compact support.

We now check that $u_j(x,t)$ satisfies the boundary condition $\alpha$ at $0$. Let $(f,g) \in \mathcal S$ be such that $f_0(0)=0$ eventually. Then, since also $H(x)g(x)=Jf_0'(x)=0$ eventually, $t\int_0^\infty u_j(x,t)^*H(x)f(x) \, dx = \int_0^\infty u_j(x,t)^*H(x)g(x) \, dx$ for $\mu_j$-almost every $t$. But an integration by parts shows that 
\[
\int_0^\infty u_j(x,t)^*H(x)g(x) \, dx=u_j(0,t)^*Jf_0(0)+t\int_0^\infty u_j(x,t)^*H(x)f(x) \, dx
\]
for $\mu_j$-almost every $t$. Hence, $u_j(0,t)^*Jf_0(0)=0$ for $\mu_j$-almost every $t$. Take $l<\infty$ such that $H$ is definite on $(0,l)$. Using \cite[Corollary 2.16]{BHSW}, take $\tilde g_i \in L_H^2(0,l)$, $i=1,\, \dots,\, n$, and solutions $\tilde f_i$ of $J\tilde f_i'=-H\tilde g_i$ such that $\tilde f_i(0)=J\alpha^*e_i$ and $\tilde f_i(l)=0$. Define
\[
(f_i(x),g_i(x))=\begin{cases} (\tilde f_i(x),\tilde g_i(x)) & x < l\\ (0,0) & x\ge l \end{cases} .
\]
Then $(f_i,g_i) \in \mathcal S$ and $f_i=0$ eventually. Hence, $0=u_j(0,t)^*Jf_i(0)=-u_j(0,t)^*\alpha^*e_i$ for every $i=1,\, \dots,\, n$ and $\mu_j$-almost every $t$. So, $\alpha u_j(0,t)=0$ for $\mu_j$-almost every $t$.

Finally, suppose that, for all $j$, $\mu_{j+1}\ll \mu_j$ and $\mu_j$ is not the $0$ measure. Let $l \le k$ if there are $k<\infty$ measures $\mu_j$, and let $l<\infty$ if there are infinitely many measures in the representation. Suppose that $\sum_{j=1}^l c_j(t)u_j(x,t)=0$. Then, for $\mu_l$-almost every $t$, $\sum_{j=1}^l c_j(t)(U_jh)(t)=0$ for every $h \in L_H^2(0,\infty)$. Using the assumption that $U$ is surjective, take $h \in L_H^2(0,\infty)$ such that $(U_{j_0}h)(t)=\chi_{(S,T)}(t)$, $-\infty<S<T<\infty$, and $U_jh=0$ for $j \neq j_0$. Hence, $c_{j_0}(t)=0$ for $\mu_l$-almost every $t\in (S,T)$. Thus, $c_j(t)=0$ for $\mu_l$-almost every $t$ and every $1\le j \le l$. So, the solutions $u_j(x,t)$, $1\le j \le l$, are linearly independent $\mu_l$-almost everywhere. Since there are only $n$ linearly independent solutions of \eqref{can} that satisfy the boundary at $0$, this shows that there are at most $n$ measures in the ordered spectral representation.
\end{proof}

In the next lemma, an arbitrary ordered spectral representation, which must be a generalized eigenfunction expansion in the sense of Lemma \ref{replemma1}, is written in terms of the fixed matrix solution $u(x,t)$ of \eqref{can} satisfying $u(0,t)=-J\alpha^*$.  
\begin{Lemma}
\label{replemma2}
Let $u(x,t)$ be as in Theorem \ref{lpm}. Take a spectral representation $\tilde U$ with ordered $\mu_,\, \dots,\,\mu_k$ and $u_1,\, \dots,\, u_k$ as in Lemma \ref{replemma1}. Then there exists a measurable function $c:\R \to \C^{n\times k}$ such that, for every column $c_j$ of $c$ and $\mu_j$-almost every $t$, $u_j(x,t)=u(x,t)c_j(t)$ for every $x\in (0,\infty)$. For every compact $[S,T] \subset \R$, $\int_S^T |c_j(t)|^2 \, d\mu_j(t)< \infty$. Define $d\rho(t)=c(t)\begin{pmatrix}
    d\mu_1(t) & & \\
    & \ddots & \\
    & & d\mu_k(t)
  \end{pmatrix}c(t)^*$ and $(Uh)(t)=\int_0^\infty u(x,t)^*H(x)h(x) \, dx$ for $h\in L_H^2(0,\infty)$ with compact support. Then $U$ extends to a map $L_H^2(0,\infty) \to L^2(\rho)$ that provides a spectral representation of $\mathcal S$. Furthermore, $U_1=U\big |_{\overline{D(\mathcal S)}}$ has an inverse given by $(U_1^{-1}F)(x)=\int_{-\infty}^\infty u(x,t) d\rho(t)F(t)$ for $F\in L^2(\rho)$ with compact support.
\end{Lemma}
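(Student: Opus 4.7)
The plan is to exploit the fact that the columns of $u(x,t)$ form a basis for all solutions of $Ju' = -tHu$ satisfying $\alpha f(0) = 0$. Indeed, $u(0,t) = -J\alpha^*$ has $\alpha J$ as a left inverse, since $\alpha J(-J\alpha^*) = \alpha \alpha^* = I$, and $\alpha(-J\alpha^*) = -\alpha J \alpha^* = 0$. Set
\[
c_j(t) := \alpha J \, u_j(0,t),
\]
which is measurable. Then $u(x,t) c_j(t)$ and $u_j(x,t)$ solve the same linear ODE with identical data at $x=0$, so by uniqueness $u_j(x,t) = u(x,t) c_j(t)$ for every $x \in (0,\infty)$, $\mu_j$-a.e. $t$. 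The matrix $c(t)$ is then formed by concatenating the columns $c_j(t)$.

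Next, to obtain the local square integrability of $c_j$, fix $l\in\N$ with $H$ definite on $(0,l)$ and a compact interval $[S,T]\subset\R$. The Hilbert-Schmidt estimate from the proof of Lemma \ref{replemma1}, applied on $(0,l)$, yields $\int_S^T \int_0^l u_j(x,t)^* H(x) u_j(x,t) \, dx \, d\mu_j(t) < \infty$. Substituting $u_j = u c_j$ gives $\int_S^T c_j(t)^* M(t) c_j(t) \, d\mu_j(t) < \infty$ for $M(t) := \int_0^l u(x,t)^* H(x) u(x,t) \, dx$. Since $H$ is definite on $(0,l)$ and the columns of $u(\cdot,t)$ are linearly independent solutions, $M(t)$ is positive definite, and continuous dependence on $t$ gives $M(t) \ge \epsilon I$ uniformly on $[S,T]$, yielding $\int_S^T |c_j(t)|^2 \, d\mu_j(t) < \infty$.

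For the spectral representation, the key identity for $h \in L_H^2(0,\infty)$ of compact support is $(U_j h)(t) = c_j(t)^* (Uh)(t)$, whence
\[
\|\tilde U h\|^2 = \sum_j \int |c_j^* Uh|^2 \, d\mu_j = \int (Uh)^* \, d\rho \, (Uh) = \|Uh\|_{L^2(\rho)}^2 .
\]
Density of compactly supported elements extends $U$ to $L_H^2(0,\infty)$ as an isometry on $\overline{D(\mathcal S)}$ with $\ker U = \mathcal S(0)$. For surjectivity, any $F \in L^2(\rho)$ satisfies $|c_j^* F|^2 d\mu_j \le F^* d\rho F$, so $(c_j^* F)_j \in \bigoplus L^2(\mu_j)$; taking a preimage $h$ under $\tilde U$, one has $c_j^*(Uh - F) = 0$ for all $j$, hence $Uh = F$ in $L^2(\rho)$. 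The intertwining $U\mathcal S U^* = M_t$ follows from the corresponding property of $\tilde U$ and the equality of two self-adjoint relations.

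For the inversion formula, write $d\rho(t) F(t) = \sum_j c_j(t) (c_j^* F)(t) \, d\mu_j(t)$, so the candidate is
\[
\phi(x) := \int u(x,t) \, d\rho(t) F(t) = \sum_j \int u_j(x,t) (c_j^* F)(t) \, d\mu_j(t) .
\]
For $F$ supported in $[S,T]$, a Gronwall estimate on $u(x,t) = -J\alpha^* + t\int_0^x JH(\xi) u(\xi,t)\, d\xi$ bounds $\|u(x,t)\|$ uniformly on $[0,l]\times[S,T]$; combined with Step 2, Cauchy-Schwarz in $L^2(\mu_j)$ shows that the defining sum for $\phi$ converges absolutely and locally uniformly in $x$. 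A Fubini interchange then gives, for every $k \in L_H^2(0,\infty)$,
\[
\int_0^\infty \phi(x)^* H(x) k(x) \, dx = \int F(t)^* d\rho(t) (Uk)(t) = \langle F, Uk\rangle_{L^2(\rho)} = \langle U_1^{-1} F, k\rangle_{L_H^2} ,
\]
identifying $\phi = U_1^{-1} F$ in $L_H^2(0,\infty)$. The main obstacle I expect is the bookkeeping in this last step: ensuring the pointwise convergence of $\phi$ and justifying the interchange of integrations are the technical points that require the Hilbert-Schmidt estimate of Step 2 together with the uniform bound on $u$.
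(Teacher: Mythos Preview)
Your proof is correct and follows essentially the same route as the paper: you express $c_j$ via the left inverse $\alpha J$ of $u(0,t)$ (the paper defines it implicitly by $u_j(0,t)=u(0,t)c_j(t)$, which is the same thing), use the Hilbert--Schmidt estimate from Lemma~\ref{replemma1} to get local square integrability of $c_j$, and obtain the isometry of $U$ from the identity $(\tilde U_j h)(t)=c_j(t)^*(Uh)(t)$. Your surjectivity argument is the paper's argument in disguise: the map $F\mapsto (c_j^*F)_j$ is precisely the isometry the paper calls $V$, and your ``take a preimage under $\tilde U$ and check $c_j^*(Uh-F)=0$'' is exactly the statement $U=V^{-1}\tilde U$. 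One small correction: in the inversion formula you should restrict the test function $k$ to have compact support so that $Hk\in L^1$ and Fubini applies (as the paper does); this suffices to identify $\phi$ with $U_1^{-1}F$ in $L_H^2$.
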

\begin{proof}
First, we define $c(t)$. Let $N_j$ be a $\mu_j$-measure zero set off of which $u_j(x,t)$ is a solution of $Ju_j'=-tHu_j$ that satisfies the boundary condition at $0$. Define $c_j(t)$ by $u_j(0,t)=u(0,t)c_j(t)$ for $t\notin N_j$, and set $c_j(t)=0$ for $t \in N_j$. Since $u_j$ is measurable and $u(0,t)$ is a constant, $c_j(t)$ is measurable.

Next, we check that $\int_S^T |c_j(t)|^2 \, d\mu_j(t)< \infty$. By \eqref{HS},
\[
\int_{-\infty}^\infty \int_a^b \frac{1}{|t-z|^2} u_j(x,t)^*H(x)u_j(x,t) \, dx \, d\mu_j(t) < \infty 
\]
for any $[a,b]\subset (0,\infty)$ and $z \in \C/\R$.  Hence, for any $[S,T] \subset \R$,
\[
\int_S^T \int_a^b u_j(x,t)^*H(x)u_j(x,t) \, dx \, d\mu_j(t) < \infty .
\]
So, since $u_j(x,t)=u(x,t)c_j(t)$ $\mu_j$-almost everywhere,
\[
\int_S^T c_j(t)^* \int_a^b u(x,t)^*H(x)u(x,t) \, dx \, c_j(t) \, d\mu_j(t) < \infty .
\]
Now, $\int_a^b u(x,t)^*H(x)u(x,t) \, dx$ is a continuous function of $t\in [S,T]$, so, since $[a,b]$ can chosen such that $H$ is definite there, this implies that
\[
\int_S^T |c_j(t)|^2 \, d\mu_j(t) < \infty .
\]

Take $h \in L_H^2(0,\infty)$ with compact support. Note that 
\begin{align*}
\langle {\tilde U}h, {\tilde U}h \rangle &= \sum_{j=1}^k \int_{-\infty}^\infty \bigg | \int_0^\infty u_j(x,t)^*H(x)h(x) \, dx \bigg | ^2 \, d\mu_j(t)\\
&=\int_{-\infty}^\infty \left((Uh)(t)\right)^* \, d\rho(t) (Uh)(t) .
\end{align*}
The above calculation shows that if $h\in \overline{D(\mathcal S)}$ has compact support, then $\langle Uh, Uh \rangle =\langle h, h \rangle$, and if $h\in \mathcal S(0)$ has compact support, then $Uh=0$. Since $L^2(\rho)$ is complete, it follows that $U$ extends to an isometry $\overline{D(\mathcal S)} \to L^2(\rho)$. By defining $Uh=0$ for all $h \in \mathcal S(0)$, one then obtains a linear map $U: L_H^2(0,\infty) \to L^2(\rho)$. To show that it provides a spectral representation, it remains to prove that $U$ is surjective and that $U_1SU_1^{-1}=M_t$, where $S$ is the operator part of $\mathcal S$.

Define $V:L^2(\rho) \to \bigoplus_j L^2(\mu_j)$ by $(VF)(t)=c(t)^*F(t)$. By the definition of $\rho$, this is an isometry. By a similar calculation to that used above to show that $\langle Uh, Uh \rangle = \langle {\tilde U}h, {\tilde U}h \rangle$, $VUh=\tilde Uh$ for $h \in L_H^2(0,\infty)$ with compact support. Since $\tilde U$ and $VU$ are bounded, it follows that $VU=\tilde U$. Hence, $V$ is surjective and thus a bijection. So, $U=V^{-1}\tilde U$ is surjective.

Let $\tilde M_t$ be the multiplication by the variable in $\bigoplus_j L^2(\mu_j)$. Trivially, $VM_t=\tilde M_tV$. So, $M_t=V^{-1}\tilde M_tV=V^{-1}\tilde U_1 S \tilde U_1^{-1}V=U_1SU_1^{-1}$, where $\tilde U_1$ is the restriction of the spectral representation from Lemma \ref{replemma1} to $\overline{D(\mathcal S)}$.

Finally, to obtain the formula for the inverse of $U_1$, take any $h \in L_H^2(0,\infty)$ and $F \in L^2(\rho)$, both with compact support. Write $h=g+k$ with $g\in \overline{D(\mathcal S)}$ and $k\in \mathcal S(0)$. Note that $\langle U_1^{-1}F, h \rangle = \langle U_1^{-1}F, g \rangle$ since $U_1^{-1}F \in \overline{D(\mathcal S)}$. So, since $U_1$ is unitary and $Uk=0$,
\begin{align*}
\langle U_1^{-1}F, h \rangle &= \langle U_1^{-1}F, g \rangle \\
&= \langle F, U_1g \rangle +\langle F, Uk \rangle \\
&= \langle F, Uh \rangle \\
&=\int_{-\infty}^\infty F(t)^* \, d\rho(t) \left(\int_0^\infty u(x,t)^*H(x)h(x) \, dx\right) \\
&=\int_0^\infty \left( \int_{-\infty}^\infty u(x,t) \, d\rho(t) F(t) \right)^* H(x)h(x) \, dx 
\end{align*}
by Fubini's theorem, which is permitted since $Hh \in L^1(0,\infty)$, $F$ has compact support, $u$ is continuous, and $\int_S^T |c_j(t)|^2 \, d\mu_j(t) < \infty$ for compact $[S,T] \subset \R$. Since this holds for all $h \in L_H^2(0,\infty)$ with compact support, the claimed formula for $U_1^{-1}$ follows.
\end{proof}

So, Lemma \ref{replemma2} provides a formula for $U$ when applied to functions in $L_H^2(0,\infty)$ with compact support, and we know that $U: L_H^2(0,\infty) \to L^2(\rho)$ provides a spectral representation of $\mathcal S$, but the target space $L^2(\rho)$ is unknown, in the sense that $\rho$ is defined in terms of the measures $\mu_j$ that were delivered by the abstract proof of the spectral theorem. The next goal is to describe $\rho$ in terms of the canonical system, thus completing the identification of the spectral representation $U: L_H^2(0,\infty) \to L^2(\rho)$. As in the previous section, $\rho$ turns out to be the measure associated with $m(z)$ by its Herglotz representation.     
\begin{Lemma}
\label{Uoff}
Let $f_m=v+um$, as in Theorem \ref{lpm}, and take $U$ as in Lemma \ref{replemma2}. Write $f_m=(f_1\: \cdots \: f_n)$. Then $(Uf_j(\cdot, z))(t)=\frac{1}{t-z}e_j$ for all $z\in \C/\R$.
\end{Lemma}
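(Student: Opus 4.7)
The plan is to compare two expressions for $U((\mathcal S-z)^{-1}h)$ when $h \in L_H^2(0,\infty)$ has a representative with compact support in $[0,L]$. On one hand, the spectral theorem (established in Lemma \ref{replemma2}) gives $U((\mathcal S-z)^{-1}h) = \frac{(Uh)(t)}{t-z}$. On the other hand, let $g$ be the absolutely continuous representative of $(\mathcal S-z)^{-1}h$ from Theorem \ref{lpres}; the explicit Green's function shows that $g(x) = f_m(x,z)c$ for $x \ge L$, where $c = c(h) := \int_0^L u(y,\overline z)^*H(y)h(y)\,dy \in \C^n$. Consequently $k := g - f_m(\cdot,z)c$ has a representative supported in $[0,L]$, so $Uk$ is computable directly via the integral formula.

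The key computation uses two Lagrange-type identities. For $t \in \R$, the homogeneous solution $f_m(\cdot,z)$ yields $(u(x,t)^*Jf_m(x,z))' = (t-z)u(x,t)^*H(x)f_m(x,z)$, while the inhomogeneous $g$ (satisfying $Jg' = -zHg - Hh$ by Theorem \ref{lpres}) yields the modified identity $(u(x,t)^*Jg(x))' = (t-z)u(x,t)^*H(x)g(x) - u(x,t)^*H(x)h(x)$. I would integrate both from $0$ to $L$ and combine them using three facts: (i) the direct computation $u(0,t)^*Jf_m(0,z) = -I$, which follows from $u(0,\cdot) = -J\alpha^*$, $v(0,\cdot) = \alpha^*$, and the relations $\alpha\alpha^* = I$, $\alpha J\alpha^* = 0$; (ii) $u(0,t)^*Jg(0) = -\alpha g(0) = 0$, from the boundary condition on $g$; and (iii) the matching $g(L) = f_m(L,z)c$, which collapses the two boundary contributions at $L$ into the single quantity $u(L,t)^*J[g(L) - f_m(L,z)c] = 0$. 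The outcome is
\[
(Uk)(t) = \frac{(Uh)(t) - c}{t-z}.
\]
Writing $U(f_m(\cdot,z)c) = \sum_{j=1}^n c_j Uf_j(\cdot,z)$ and equating $\frac{(Uh)(t)}{t-z} = (Uk)(t) + \sum_j c_j(Uf_j(\cdot,z))(t)$ gives
\[
\sum_{j=1}^n c_j\,Uf_j(\cdot,z) = \frac{c}{t-z} = \sum_{j=1}^n c_j\frac{e_j}{t-z}
\]
in $L^2(\rho)$, for every $c$ of the form $c(h)$.

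To conclude $Uf_j(\cdot,z) = \frac{e_j}{t-z}$, I would show that the linear map $h \mapsto c(h)$ is surjective onto $\C^n$ as $h$ ranges over compactly supported elements of $L_H^2(0,\infty)$. If $v \in \C^n$ were orthogonal to every $c(h)$, then $\int_0^L v^*u(y,\overline z)^*H(y)h(y)\,dy = 0$ for all such $h$, forcing $H(y)u(y,\overline z)v = 0$ almost everywhere; since $u(\cdot,\overline z)v$ is a solution of \eqref{can}, the definiteness of $H$ on $(0,\infty)$ makes $u(\cdot,\overline z)v \equiv 0$ pointwise. Evaluating at $x = 0$ gives $-J\alpha^*v = 0$, and since $J$ is invertible and $\alpha^*$ has full column rank (because $\alpha\alpha^* = I$), this forces $v = 0$. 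Selecting $h$ with $c(h) = e_j$ then delivers the desired formula.

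The main obstacle I anticipate is the bookkeeping in the Lagrange identity for the inhomogeneous equation: verifying the sign of the $u^*Hh$ term and arranging the two boundary expressions at $L$ so that they combine into $u(L,t)^*J[g(L) - f_m(L,z)c] = 0$, while the boundary contribution at $0$ produces exactly the constant $-c$ that merges with $(Uh)(t) = \int_0^L u(x,t)^*H(x)h(x)\,dx$ to yield the clean numerator $(Uh)(t) - c$.
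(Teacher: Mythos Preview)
Your argument is correct and takes a genuinely different route from the paper's proof. The paper works on the spectral side: it fixes $h\in\overline{D(\mathcal S)}$ with $Uh$ compactly supported in $t$, invokes the inverse formula $(U_1^{-1}F)(x)=\int u(x,t)\,d\rho(t)F(t)$ from Lemma~\ref{replemma2}, checks via Fubini that this expression is the unique absolutely continuous representative of $(\mathcal S-z)^{-1}h$, and then evaluates at $x=0$ to read off $\langle f_j(\cdot,\overline z),h\rangle=\int \frac{1}{t-z}e_j^*\,d\rho(t)(Uh)(t)$; density of such $Uh$ finishes the proof. You instead stay on the $x$-side: taking $h$ compactly supported in $[0,L]$, you exploit the explicit tail $g(x)=f_m(x,z)c$ of the Green-function representative to isolate a compactly supported piece $k=g-f_m(\cdot,z)c$, and compute $(Uk)(t)$ directly by the Lagrange identity, with the surjectivity of $h\mapsto c$ (essentially the argument already used in Theorem~\ref{mhol}) closing the loop. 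Your approach is more elementary in that it never needs the $U_1^{-1}$ formula or the Fubini justification with the as-yet-unidentified measure $\rho$; the paper's approach is cleaner in avoiding the integration-by-parts bookkeeping and the split $g=k+f_m c$. Both ultimately rest on the same two ingredients: the explicit initial values $u(0,\cdot)=-J\alpha^*$, $f_m(0,z)=\alpha^*-J\alpha^* m(z)$, and the spectral identity $U(\mathcal S-z)^{-1}=M_{(t-z)^{-1}}U$.
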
 
\begin{proof}
By the functional calculus, $(\mathcal S -z)^{-1}h=U_1^{-1}M_{(t-z)^{-1}}Uh$ for all $h\in L_H^2(0,\infty)$. Fix $h\in \overline{D(\mathcal S)}$ such that that $Uh$ has compact support. So, $\int_0^\infty G(x,y,z)H(y)h(y) \, dy$ and $\int_{-\infty}^\infty u(x,t) \, d\rho(t) \frac{1}{t-z}(Uh)(t)$ represent the same element in $L_H^2(0,\infty)$. By Fubini's theorem, which is applicable since $Uh$ has compact support and the trace of $\rho$ is a locally finite measure by Lemma \ref{replemma2}, $\int_{-\infty}^\infty \frac{1}{t-z}u(x,t) \, d\rho(t) (Uh)(t)$ is absolutely continuous, and 
\begin{align*}
J\frac{d}{dx}\int_{-\infty}^\infty \frac{1}{t-z}u(x,t) \, d\rho(t) (Uh)(t) &= H(x)\int_{-\infty}^\infty \frac{-t}{t-z}u(x,t) \, d\rho(t) (Uh)(t)\\
&= H(x)\int_{-\infty}^\infty \frac{-z}{t-z}u(x,t) \, d\rho(t) (Uh)(t)\\
 & \quad \: -H(x)\int_{-\infty}^\infty u(x,t) \, d\rho(t) (Uh)(t)\\
&= H(x)\int_{-\infty}^\infty \frac{-z}{t-z}u(x,t) \, d\rho(t) (Uh)(t)\\
& \quad \:-H(x)h(x) .
\end{align*}
But $g(x)=\int_0^\infty G(x,y,z)H(y)h(y) \, dy$ is the unique absolutely continuous representative of $(\mathcal S -z)^{-1}h$ that solves $Jg'=-zHg-Hh$ by Theorem \ref{lpres}, so 
\[
\int_0^\infty G(x,y,z)H(y)h(y) \, dy = \int_{-\infty}^\infty \frac{1}{t-z}u(x,t) \, d\rho(t) (Uh)(t)
\]
for all $x \ge 0$. Setting $x=0$ gives that
\[
u(0,t) \int_0^\infty f_m(y,\overline z)^*H(y)h(y) \, dy = \int_{-\infty}^\infty \frac{1}{t-z}u(0,t) \, d\rho(t) (Uh)(t) .
\]
Hence, since $u(0,t)$ has a left inverse,
\[
\langle f_j(\cdot, \overline z), h \rangle = \int_{-\infty}^\infty \frac{1}{t-z}e_j^* \, d\rho(t) (Uh)(t) .
\]
Write $f_j=k_j+l_j$ with $k_j \in \overline{D(\mathcal S)}$ and $l_j \in \mathcal S(0)$. Then
\[
\langle f_j, h \rangle = \langle k_j, h \rangle = \langle Uk_j, Uh \rangle = \langle Uf_j, Uh \rangle .
\]
So, 
\[
\langle Uf_j(\cdot,\overline z), Uh \rangle =\int_{-\infty}^\infty \frac{1}{t-z}e_j^* \, d\rho(t) (Uh)(t)
\]
for all $h\in \overline{D(\mathcal S)}$ such $Uh$ has compact support. Since functions with compact support are dense in $L^2(\rho)$ and $U$ maps $\overline{D(\mathcal S)}$ onto $L^2(\rho)$, the claim follows.
\end{proof}

\begin{Lemma}
\label{Bformula}
Write $f_j=k_j+l_j$ with $k_j \in \overline{D(\mathcal S)}$ and $l_j \in \mathcal S(0)$, and let $l=(l_1\: \cdots \: l_n)$. Then $\int_0^\infty l(x,z)^*H(x)l(x,z) \, dx$ is constant for $z \in \C^+$.
\end{Lemma}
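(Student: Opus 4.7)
The plan is to decompose the matrix integral $\int_0^\infty l^*Hl\,dx$ using the orthogonality of the splitting $f_m=k+l$, and then identify both pieces $\int f_m^*Hf_m$ and $\int k^*Hk$ in terms of $m(z)$ and the Herglotz measure $\rho$; the result then drops out of the Herglotz representation of $m$.

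Concretely, first I would observe that $l_j \in \mathcal S(0) = D(\mathcal S)^\perp \supset \overline{D(\mathcal S)}^\perp$, so $\langle k_i, l_j\rangle = 0$ for all $i,j$. Expanding $f_i^*Hf_j = (k_i+l_i)^*H(k_j+l_j)$ and integrating gives the matrix Pythagoras identity
\[
\int_0^\infty f_m^*Hf_m\,dx = \int_0^\infty k^*Hk\,dx + \int_0^\infty l^*Hl\,dx,
\]
where $k=(k_1\:\cdots\:k_n)$. So it suffices to show that $\int f_m^*Hf_m - \int k^*Hk$ is independent of $z\in\C^+$.

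Next, for the first term I would reuse the calculation from the proof of Theorem \ref{lpm}: integrating $(f_m^*Jf_m)' = (\overline z - z)f_m^*Hf_m$ from $0$ to $\infty$, using that $f_m$ satisfies the boundary condition $\alpha$ at $0$ (giving $f_m(0,z)^*Jf_m(0,z) = m(z)-m(z)^*$) and that the boundary term at $\infty$ vanishes by Theorem \ref{lpcriterion}, yields
\[
\int_0^\infty f_m(x,z)^*H(x)f_m(x,z)\,dx = \frac{m(z)-m(z)^*}{z-\overline z}.
\]
For the second term, since $U$ annihilates $\mathcal S(0)$, one has $Uk_j = Uf_j$, and $U$ is isometric on $\overline{D(\mathcal S)}$. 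By Lemma \ref{Uoff}, $(Uf_j(\cdot,z))(t) = \frac{1}{t-z}e_j$, so
\[
\langle k_i(\cdot,z), k_j(\cdot,z)\rangle = \int_{-\infty}^\infty \frac{1}{\overline{t-z}}e_i^*\,d\rho(t)\,\frac{1}{t-z}e_j = e_i^* \!\int_{-\infty}^\infty\! \frac{d\rho(t)}{|t-z|^2}\,e_j,
\]
which in matrix form gives $\int_0^\infty k^*Hk\,dx = \int_{-\infty}^\infty \frac{d\rho(t)}{|t-z|^2}$.

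Finally, using the Herglotz representation \eqref{Herrep} of $m$, a short calculation (writing $m(z)-m(z)^*$ via the representation, using $A=A^*$ and the identity $\frac{1}{t-z}-\frac{1}{t-\overline z}=\frac{z-\overline z}{|t-z|^2}$) gives
\[
\frac{m(z)-m(z)^*}{z-\overline z} = B + \int_{-\infty}^\infty \frac{d\rho(t)}{|t-z|^2}.
\]
Subtracting the two identifications shows $\int_0^\infty l(x,z)^*H(x)l(x,z)\,dx = B$, which is manifestly independent of $z$. The only mildly delicate point is keeping the matrix-valued bookkeeping straight, but no new analytic input beyond the isometry of $U$, Lemma \ref{Uoff}, and the Lagrange identity from the proof of Theorem \ref{lpm} is needed.
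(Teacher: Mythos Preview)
Your argument is circular in the context of this paper. The measure $\rho$ that appears in Lemma~\ref{Uoff} and in the isometry identity $\int k^*Hk = \int |t-z|^{-2}\,d\rho(t)$ is the measure built in Lemma~\ref{replemma2} from an abstract ordered spectral representation; at this point in the paper it is \emph{not} yet known to coincide with the measure in the Herglotz representation \eqref{Herrep} of $m$. That identification is precisely the content of Theorem~\ref{lprep}, whose proof invokes Lemma~\ref{Bformula}. So when you write ``using the Herglotz representation \eqref{Herrep} of $m$'' and then subtract $\int |t-z|^{-2}\,d\rho(t)$, you are silently assuming the two $\rho$'s are the same, which is exactly what remains to be proved.

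The paper avoids this by arguing directly that the self-adjoint matrix $\int_0^\infty l(x,z)^*H(x)l(x,z)\,dx$ is holomorphic on $\C^+$ (hence constant). It first extracts from the proof of Theorem~\ref{mhol} that $h\mapsto \int_0^\infty l(x,\overline z)^*H(x)h(x)\,dx$ is holomorphic for every $h$, then introduces the absolutely continuous $p(\cdot,z)$ with $Hp=0$, $Jp'=-Hl$ coming from $(0,l_j)\in\mathcal S$, shows $p(0,z)$ is holomorphic via an integration by parts, and finally computes $\int l^*Hl = f_m(0,z)^*Jp(0,z) = -c(z)$ with $c(z)$ holomorphic. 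None of this touches the spectral measure. Your Pythagoras decomposition and the computation of $\int f_m^*Hf_m$ and $\int k^*Hk$ are fine and are in fact reused verbatim in the proof of Theorem~\ref{lprep}; the missing piece is an argument for the constancy of $\int l^*Hl$ that does not presuppose Theorem~\ref{lprep}.
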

\begin{proof}
Since $\int_0^\infty l(x,z)^*H(x)l(x,z) \, dx$ is a self-adjoint matrix, it suffices to show that $\int_0^\infty l(x,z)^*H(x)l(x,z) \, dx$ is holomorphic for $z\in \C^+$ because then each entry in the matrix and its complex conjugate are holomorphic functions on $\C^+$. Let $h\in L_H^2(0,\infty)$ be arbitrary. From the proof Theorem \ref{mhol}, $\int_0^\infty f_m(x,\overline z)^*H(x)h(x) \, dx$ is holomorphic on $\C/\R$. So, for $h\in \mathcal S(0)$, $\int_0^\infty l(x,\overline z)^*H(x)h(x) \, dx$ is holomorphic. Now, for $h\in \overline{D(\mathcal S)}=\mathcal S(0)$, $\int_0^\infty l(x,\overline z)^*H(x)h(x) \, dx=0$. So, $\int_0^\infty l(x,\overline z)^*H(x)h(x) \, dx$ is holomorphic for arbitrary $h\in L_H^2(0,\infty)$.  

Since $(0,l_j(\cdot,z)) \in \mathcal S$, there exists an absolutely continuous function $p_j(\cdot, z)$ such that $H(x)p_j(x,z)=0$ and $Jp_j'=-Hl_j$ almost everywhere. Let $p=(p_1\: \cdots \: p_n)$. Take any $(h,g) \in \mathcal T$. Then, by an integration by parts and Theorem \ref{lpcriterion},
\begin{align*}
\int_0^\infty l(x,\overline z)^*H(x)h_0(x) \, dx &= \int_0^\infty p(x,\overline z)'^*Jh_0(x) \, dx\\
&= -p(0,\overline z)^*Jh_0(0) - \int_0^\infty p(x,\overline z)^*Jh_0'(x) \, dx\\
&= -p(0,\overline z)^*Jh_0(0) - \int_0^\infty p(x,\overline z)^*H(x)g(x) \, dx\\
&=-p(0,\overline z)^*Jh_0(0)
\end{align*}
since $H(x)p(x,\overline z)=0$ almost everywhere. By the first paragraph, the matrix $\int_0^\infty l(x,\overline z)^*H(x)h_0(x) \, dx$ is holomorphic. So, $p(0,\overline z)^*Jh_0(0)$ is holomorphic on $\C/\R$. Since $H$ is definite on $(0,\infty)$ and $0$ is a regular endpoint, for any $c \in \C^{2n}$, there exists $(h,g) \in \mathcal T$ with $h_0(0)=c$ \cite[Corollary 2.16]{BHSW}. Thus, $p(0,z)$ is holomorphic. 

Now,
{%
\small 
\begin{align*}
\int_0^\infty l(x,z)^*H(x)l(x,z) \, dx &= \int_0^\infty f_m(x,z)^*H(x)l(x,z) \, dx\\
&= - \int_0^\infty f_m(x,z)^*Jp'(x,z) \, dx\\
&= f_m(0,z)^*Jp(0,z) + \int_0^\infty f_m(x,z)'^*Jp(x,z) \, dx\\
&= f_m(0,z)^*Jp(0,z) + \int_0^\infty \overline{z}f_m(x,z)^*H(x)p(x,z) \, dx\\ 
&= f_m(0,z)^*Jp(0,z) .
\end{align*}
}%
Since $(p_j, l_j) \in \mathcal S$, $p_j$ satisfies the boundary condition at $0$, so $p_j(0,z)=J\alpha^*c_j(z)$ for some $c_j(z)\in \C^n$. Let $c(z)=(c_1(z)\: \cdots \: c_n(z))$. So, since $f_m(0,z)=\alpha^*-J\alpha^*m(z)$,
\[
f_m(0,z)^*Jp(0,z)=(\alpha+m(z)^*\alpha J)(-\alpha^*c(z))=-c(z) .
\]
Now, since $p(0,z)$ is holomorphic by the previous paragraph, and $J\alpha^*$ has a left inverse, $c(z)$ is holomorphic. Thus, $\int_0^\infty l(x,z)^*H(x)l(x,z) \, dx=-c(z)$ is holomorphic. 
\end{proof}

\begin{Theorem}
\label{lprep}
Let $l$ be the multivalued component of $f_m=v+um$, as in Lemma \ref{Bformula}. The Herglotz representation of $m(z)$ is
\[
m(z)=A + Bz + \int_{-\infty}^{\infty} \left( \frac{1}{t-z} - \frac{t}{t^2+1} \right)\, d\rho(t) ,
\]
for some $A=A^*\in \C^{n \times n}$, where $B=\int_0^\infty l(x,z)^*H(x)l(x,z) \, dx$ and $\rho$ is the measure from Lemma \ref{replemma2}. In particular, the map $U: L_H^2(0,\infty) \to L^2(\rho)$, from Lemma \ref{replemma2}, defined by 
\[
(Uh)(t)=\int_0^\infty u(x,t)^*H(x)h(x) \, dx
\]
for $h\in L_H^2(0,\infty)$ with compact support provides a spectral representation of $\mathcal S$ in the $L^2$ space over the measure associated with $m(z)$ by its Herglotz representation.
\end{Theorem}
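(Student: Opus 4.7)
The plan is to compute the matrix $\int_0^\infty f_m(x,z)^*H(x)f_m(x,w)\, dx$ in two different ways---once via the Lagrange identity and once via the spectral decomposition furnished by $U$---and then to specialize to $w=z\in\C^+$ to extract both $B$ and $\rho$ from the Herglotz representation of $m(z)$.

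On the one hand, I would apply the Lagrange identity to $f_m(\cdot,z)$ and $f_m(\cdot,w)$ on $(0,b)$ and send $b\to\infty$. Each column of $f_m$ lies in $L_H^2(0,\infty)$, is absolutely continuous, and solves \eqref{can}, so it gives (paired with $z$, or $w$, times itself) an element of the maximal relation equal to its own $f_0$; Theorem \ref{lpcriterion} therefore kills the boundary term at infinity. A direct calculation from \eqref{bc} and $f_m(0,z)=\alpha^*-J\alpha^*m(z)$ identifies the boundary term at $0$ as $m(w)-m(\overline z)$ (using $m(\overline z)=m(z)^*$ from Theorem \ref{lpm}), yielding
\[
\int_0^\infty f_m(x,z)^*H(x)f_m(x,w)\, dx \;=\; \frac{m(w)-m(\overline z)}{w-\overline z}.
\]

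On the other hand, I would decompose each column $f_j(\cdot,z)=k_j(\cdot,z)+l_j(\cdot,z)$ via $L_H^2(0,\infty)=\overline{D(\mathcal S)}\oplus\mathcal S(0)$. The cross-terms in the sesquilinear form vanish, and the isometry of $U$ on $\overline{D(\mathcal S)}$ together with $Uf_j(\cdot,z)=Uk_j(\cdot,z)=\frac{1}{t-z}e_j$ from Lemma \ref{Uoff} assemble into the matrix identity
\[
\frac{m(w)-m(\overline z)}{w-\overline z} \;=\; \int_{-\infty}^\infty \frac{d\rho(t)}{(t-\overline z)(t-w)} \;+\; \int_0^\infty l(x,z)^*H(x)l(x,w)\, dx.
\]

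Specializing to $w=z\in\C^+$, the left side becomes $\Im m(z)/\Im z$, the first integral becomes $\int d\rho(t)/|t-z|^2$, and the second integral is the constant matrix $B$ by Lemma \ref{Bformula}. Comparing with the imaginary part of the Herglotz representation $m(z)=\tilde A+\tilde B z+\int(\frac{1}{t-z}-\frac{t}{t^2+1})\, d\tilde\rho(t)$, which gives $\Im m(z)/\Im z=\tilde B+\int d\tilde\rho(t)/|t-z|^2$, I would invoke the uniqueness of the Poisson representation of nonnegative harmonic functions on $\C^+$---applied entrywise, with Stieltjes inversion handling the diagonal entries and polarization the off-diagonal ones---to conclude $\tilde B=B$ and $\tilde\rho=\rho$. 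The matrix $A=A^*$ in the statement is then whatever $\tilde A$ comes out of that representation. The spectral-representation assertion of the theorem is then immediate from Lemma \ref{replemma2}. The main obstacle, I expect, is bookkeeping rather than ideas: keeping track of matrix-valued $L^2(\rho)$ inner products cleanly through the decomposition step, and justifying the entrywise Poisson-uniqueness argument carefully. The core spectral content has been packaged into Lemmas \ref{replemma1}--\ref{Bformula}, so the present theorem is really an assembly step.
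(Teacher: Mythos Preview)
Your proposal is correct and follows essentially the same route as the paper: decompose $f_m=k+l$, use Lemma \ref{Uoff} on the $k$-part and Lemma \ref{Bformula} on the $l$-part, and compare with the imaginary part of the Herglotz representation. One simplification worth noting: rather than invoking Stieltjes inversion and polarization at the end, the paper simply defines $\tilde m(z)=Bz+\int\bigl(\tfrac{1}{t-z}-\tfrac{t}{t^2+1}\bigr)\,d\rho(t)$, observes that $\Im\tilde m=\Im m$, and concludes $m-\tilde m$ is a constant self-adjoint matrix since both are holomorphic on $\C^+$; this bypasses your Poisson-uniqueness bookkeeping entirely and also makes the two-parameter $(z,w)$ computation unnecessary, since only the case $w=z$ is ever used.
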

\begin{proof}
Recall from the proof of Theorem \ref{lpm} that 
\[
\Im{m(z)}= \Im(z) \int_0^\infty f_m(x,z)^*H(x)f_m(x,z) \, dx .
\]
As in Lemma \ref{Bformula}, write $f_j=k_j+l_j$ with $k_j \in \overline{D(\mathcal S)}$ and $l_j \in \mathcal S(0)$, and let $k=(k_1\: \cdots \: k_n)$. Then
\begin{align*}
\int_0^\infty f_m(x,z)^*H(x)f_m(x,z) \, dx=&\int_0^\infty k(x,z)^*H(x)k(x,z) \, dx\\
& +\int_0^\infty l(x,z)^*H(x)l(x,z) \, dx .
\end{align*}
By Lemmas \ref{replemma2} and \ref{Uoff},
\[
\int_0^\infty k(x,z)^*H(x)k(x,z) \, dx=\int_{-\infty}^\infty \frac{1}{|t-z|^2} \, d\rho(t) .
\]
By Lemma \ref{Bformula}, $\int_0^\infty l(x,z)^*H(x)l(x,z) \, dx \ge 0$ is a constant, say $B$, for $z \in \C^+$. So, 
\[
\tilde m(z)= Bz + \int_{-\infty}^{\infty} \left( \frac{1}{t-z} - \frac{t}{t^2+1} \right)\, d\rho(t)
\]
is a Herglotz function, and its imaginary part is
\[
\Im(z)\left( B+ \int_{-\infty}^\infty \frac{1}{|t-z|^2} \, d\rho(t)\right) ,
\]
which is also the imaginary part of $m(z)$. So, since $m(z)$ and $\tilde m(z)$ are holomorphic on $\C^+$,
\[
m(z)=A + Bz + \int_{-\infty}^{\infty} \left( \frac{1}{t-z} - \frac{t}{t^2+1} \right)\, d\rho(t) ,
\]
for some constant $A=A^* \in \C^{n \times n}$. 
\end{proof}
\begin{remark}
Suppose that $H(x)\in\R^{2\times 2}$, $\tr H(x)=1$, and the boundary condition $u_2(0)=0$ is imposed. Then $B>0$ if and only if $H(x)=\begin{pmatrix} 0 & 0 \\ 0 & 1 \end{pmatrix}$ on some interval $(0,b)$, and in that case $B$ is the supremum of such $b$. The traditional proof of this relies on Weyl theory and some results in complex analysis due to de Branges \cite{dB}, \cite{Rembook}. A simple proof can be obtained using the formula for $B$ in the above theorem. An interesting question is whether $B$ can be related to $H$ in some direct way for higher order systems. An attempt to study this using the formula for $B$ runs into the problem that the multivalued part $\mathcal S(0)$ is not as well-understood for higher order systems. For a canonical system on a star graph, the multivalued part and $B$ are described in \cite{dSW}.
\end{remark}
\begin{remark}
Theorem \ref{lprep}, apart from the formula for $B$, is well-known when $n=1$ and $H(x)$ is real. The traditional way to prove it in that case is with a Weyl theory of nested disks. This is done, for example, in the book \cite{Rembook}. There are several technical problems in generalizing such a proof to $n>1$. One obvious problem is that the expected analogue of a Weyl disk is no longer a disk. Some steps, however, are taken in this direction in the papers \cite{HS1}, \cite{HS2}, and \cite{Krall}. As mentioned in the Introduction, a stronger definiteness assumption, which implies that the self-adjoint relation $\mathcal S$ has no multivalued part, is adopted in those papers. Another way to get to the theorem in the case $n=1$ starts by introducing Weyl $m$ functions through the abstract theory of boundary triplets \cite{BHS}. In that approach, substantial work must still be done on the canonical system side to get the spectral representation \cite[Section 7.8, Appendix B]{BHS}. Moreover, there are technical problems in generalizing that approach for $n>1$; for example, the proof, in \cite[Appendix B]{BHS}, that the map giving the spectral representation is surjective does not have an obvious generalization for when $n>1$ and $H$ is not real. The approach used in this paper is inspired by that used in \cite{Teschl} and \cite{WMLN} for equations of a different type than canonical systems.
\end{remark}
\begin{remark}
Whole line canonical systems, assuming the deficiency indices are both equal to $0$, are covered by Theorem \ref{lprep}. To see this, transform $H(x)$ on $(-\infty,0)$ so that it is defined on $(0,\infty)$ by writing $\tilde H(x)=\begin{pmatrix} -I & 0 \\ 0 & I \end{pmatrix}H(-x)\begin{pmatrix} -I & 0 \\ 0 & I \end{pmatrix}$ for $x\in (0,\infty)$. One can collect the two order $2n$ canonical systems $\tilde H(x)$ and $H(x)$ on $(0,\infty)$ into a single order $4n$ canonical system on $(0,\infty)$ with a boundary condition at $0$ that encodes that $\tilde u(0)=\begin{pmatrix} -I & 0 \\ 0 & I \end{pmatrix}u(0)$ for solutions of the original whole line problem. A similar trick will be employed in the next section to turn a canonical system on a compact graph into a canonical system on an interval.
\end{remark}

\section{Canonical systems on compact graphs}
Consider a canonical system on a graph. In this section, the graph $G$ is assumed to be connected and consist of finitely many edges and vertices; it is also assumed here that each edge connects exactly two vertices. For convenience, assume that there is at most one edge between any two vertices. Label the edges $E_1,\, \dots,\, E_k$. With each edge $E_i$, associate a differential equation $Ju'(x) = -zH_i(x)u(x)$ with $x\in (-r_i,r_i)$, $H_i(x)\in\R^{2\times 2}$, $H_i \in L^1(-r_i,r_i)$, and $H_i(x)\ge 0$. Note that $J=\begin{pmatrix} 0 & -1 \\ 1 & 0 \end{pmatrix}$ here.

Suppose the graph is directed: each edge has an initial vertex and terminal vertex. Then interface conditions can be introduced as follows. Fix a vertex $v$. Let $E_o(v)= \{ i : v \textrm{ \rm{is the initial vertex of} }E_i \}$ and $E_t(v)= \{ i : v \textrm{ \rm{is the terminal vertex of} }E_i \}$. Suppose that the numbers in $E_o(v)$ are $i_1<\cdots<i_L$ , $i_p=i_p(v), \, L=L(v)$, and the numbers in $E_t(v)$ are $j_1<\cdots<j_M$, $j_p=j_p(v),\, M=M(v)$. Let $\beta^{(0)}=\beta^{(0)}(v)\in \C^{(L+M)\times 2(L+M)}$ be such that $\beta^{(0)} \beta^{(0)*}=I$ and $\beta^{(0)} J \beta^{(0)*}=0$. Write $\beta^{(0)}=(\beta_1^{(0)} \: \beta_2^{(0)})$ with $\beta_i^{(0)}\in \C^{(L+M)\times (L+M)}$. It will be convenient later to have the entries in $\beta_1^{(0)}$ labelled by
\[
\beta_1^{(0)}=\begin{pmatrix} a_{i_1i_1} & a_{i_1i_2} & \cdots & a_{i_1i_L} & a_{i_1j_1} & \cdots & a_{i_1j_M} \\ a_{i_2i_1} & \cdots \\ \vdots \\ a_{i_Li_1} & \cdots \\ a_{j_1i_1} & \cdots \\ \vdots \\ a_{j_Mi_1} & \cdots \end{pmatrix} , 
\]
and, similarly, the entries in $\beta_2^{(0)}$ by
\[
\beta_2^{(0)}=\begin{pmatrix} b_{i_1i_1} & b_{i_1i_2} & \cdots & b_{i_1i_L} & b_{i_1j_1} & \cdots & b_{i_1j_M} \\ b_{i_2i_1} & \cdots \\ \vdots \\ b_{i_Li_1} & \cdots \\ b_{j_1i_1} & \cdots \\ \vdots \\ b_{j_Mi_1} & \cdots \end{pmatrix} .
\]
An interface condition at $v$ is introduced by requiring that 
\[
\beta_1^{(0)} \begin{pmatrix} u_{(i_1)1}(-r_{i_1}) \\ \vdots \\ u_{(i_L)1}(-r_{i_L}) \\ -u_{(j_1)1}(r_{j_1}) \\ \vdots \\ -u_{(j_M)1}(r_{j_M}) \end{pmatrix} +
\beta_2^{(0)} \begin{pmatrix} u_{(i_1)2}(-r_{i_1}) \\ \vdots \\ u_{(i_L)2}(-r_{i_L}) \\ u_{(j_1)2}(r_{j_1}) \\ \vdots \\ u_{(j_M)2}(r_{j_M}) \end{pmatrix}= 0
\]
for solutions $u_{(i_p)}$ and $u_{(j_q)}$ of $Ju'=-zH_{i_p}u$ and $Ju'=-zH_{j_q}u$, respectively. 

Define the maximal relation of the canonical systems $Ju' = -zH_iu$ on the graph $G$ as
\begin{align*}
\mathcal T_G= \{ &(f,g) \in \left( \bigoplus_{i=1}^{k} L_{H_i}^2(-r_i,r_i) \right)^2 : f_{(i)} \textrm{ has an AC representative }\\
 &f_{(i)0} \textrm{ such that } Jf_{(i)0}'(x)=-H_ig_{(i)}(x) \textrm{ for a.e. } x\in (-r_i,r_i) \} . 
\end{align*}
Note that this is simply the direct sum of the maximal relations of the individual canonical systems on the edges. Define the relation $\mathcal S_G$ as the set of all $(f,g) \in \mathcal T_G$ such that the representatives $f_{(i)0}=h_{(i)}$ introduced in the definition of $\mathcal T_G$ satisfy
\[
\beta_1^{(0)} \begin{pmatrix} h_{(i_1)1}(-r_{i_1}) \\ \vdots \\ h_{(i_L)1}(-r_{i_L}) \\ -h_{(j_1)1}(r_{j_1}) \\ \vdots \\ -h_{(j_M)1}(r_{j_M}) \end{pmatrix} +
\beta_2^{(0)} \begin{pmatrix} h_{(i_1)2}(-r_{i_1}) \\ \vdots \\ h_{(i_L)2}(-r_{i_L}) \\ h_{(j_1)2}(r_{j_1}) \\ \vdots \\ h_{(j_M)2}(r_{j_M}) \end{pmatrix}= 0
\]
at every vertex $v$.

The canonical system on $G$ can be transformed into a higher order canonical system on $(0,1)$. The idea is to insert a vertex in the middle of each edge with Neumann-Kirchhoff interface conditions there. We then have $2k$ canonical systems, which, after changing the variables, are all on $(0,1)$ with $0$ corresponding to the newly inserted vertices and $1$ corresponding to the original vertices. These canonical systems can all be collected into one higher order system with a boundary condition at $0$ coming from the Neumann-Kirchhoff interface conditions at the inserted vertices and a boundary condition at $1$ determined by the original interface conditions.

To do this explicitly, let $N=\begin{pmatrix} -1 & 0 \\ 0 & 1 \end{pmatrix}$, and define
\begin{equation}
\label{splitH}
H_i^{(1)}(x)=r_i N H_i(-r_ix)N , \quad H_i^{(2)}(x)=r_iH_i(r_ix)
\end{equation}
for $x\in (0,1)$. For $f \in \bigoplus_{i=1}^{k} L_{H_i}^2(-r_i,r_i)$, set $f_{(i)}^{(1)}(x)=Nf_{(i)}(-r_ix)$ and $f_{(i)}^{(2)}(x)=f_{(i)}(r_ix)$. Note that $f_{(i)}^{(j)}(x)$, as an element of $L_{H_i^{(j)}}^2(0,1)$, does not depend on choice of the representative of $f_{(i)}\in  L_{H_i}^2(-r_i,r_i)$. Define 
\[
\tilde N: \bigoplus_{i=1}^{k} L_{H_i}^2(-r_i,r_i) \to \bigoplus_{j=1}^2 \bigoplus_{i=1}^{k} L_{H_i^{(j)}}^2(0,1) 
\]
by
\begin{align}
\label{tildeN}
(\tilde Nf)(x)&=\left(\left(f_{(i)}^{(1)}(x)\right)_{i=1}^k, \left(f_{(i)}^{(2)}(x)\right)_{i=1}^k\right) \nonumber \\
&=\left(\left(Nf_{(i)}(-r_ix)\right)_{i=1}^k, \left(f_{(i)}(r_ix)\right)_{i=1}^k\right) .
\end{align}
Clearly, $\tilde N$ is unitary. Consider the graph $\tilde G$ obtained from $G$ by inserting a vertex into the middle of each edge. Then the $H_i^{(1)}$ and $H_i^{(2)}$ form a canonical system on $\tilde G$. The maximal relation is defined by
\begin{align*}
\mathcal T_{\tilde G}= \{ &(f,g) \in \left( \bigoplus_{j=1}^2 \bigoplus_{i=1}^{k} L_{H_i^{(j)}}^2(0,1) \right)^2 : f_{(i)}^{(j)} \textrm{ has an AC representative }\\
 &f_{(i)0}^{(j)} \textrm{ such that } Jf_{(i)0}^{(j)'}(x)=-H_i^{(j)}g_{(i)}^{(j)}(x) \textrm{ for a.e. } x\in (0,1) \} . 
\end{align*}
Note that $\tilde N \mathcal T_G {\tilde N}^*$ is the subset of $\mathcal T_{\tilde G}$ defined by the condition that $f_{(i)0}^{(2)}(0)=Nf_{(i)0}^{(1)}(0)$.

The $2k$ canonical systems $H_i^{(1)}$ and $H_i^{(2)}$ can be rewritten as one canonical system $H(x) \in \C^{4k \times 4k}$. First, let 
\begin{equation}
\label{tildeH}
\tilde H(x) = \left( \bigoplus_{i=1}^k H_i^{(1)}(x) \right) \oplus \left( \bigoplus_{i=1}^k H_i^{(2)}(x) \right) ,
\end{equation}
and define a unitary matrix $C$ by 
\[
Ce_i=\begin{cases} e_{(i+1)/2} & i \textrm{ odd} \\ e_{2k+\frac{i}{2}} & i \textrm{ even} \end{cases} 
\]
for $1\le i \le 4k$.
Let 
\begin{equation}
\label{bigCS}
H=C\tilde{H}C^* .
\end{equation} 
Then $(f_{(i)}^{(j)}, g_{(i)}^{(j)}) \in \mathcal T_{H_i^{(j)}}$ for all $1\le i \le k$ and $j\in \{1,2\}$ if and only if $(f,g) \in \mathcal T_H$, where 
\[
f =C(f_{(1)}^{(1)t},\, \dots,\, f_{(k)}^{(1)t},\, f_{(1)}^{(2)t},\, \dots,\, f_{(k)}^{(2)t})^t 
\]
and
\[
g =C(g_{(1)}^{(1)t},\, \dots,\, g_{(k)}^{(1)t},\, g_{(1)}^{(2)t},\, \dots,\, g_{(k)}^{(2)t})^t .
\] 
So, the map $\tilde V: \bigoplus_{j=1}^2 \bigoplus_{i=1}^{k} L_{H_i^{(j)}}^2(0,1) \to L_H^2(0,1)$ defined by 
\begin{equation}
\label{tildeV}
\tilde V\left(\left(f_{(i)}^{(1)}\right)_{i=1}^k, \left(f_{(i)}^{(2)}\right)_{i=1}^k\right)=C(f_{(1)}^{(1)t},\, \dots,\, f_{(k)}^{(1)t},\, f_{(1)}^{(2)t},\, \dots,\, f_{(k)}^{(2)t})^t
\end{equation} 
is unitary and satisfies $\tilde V{\mathcal T_{\tilde G}}{\tilde V}^*=\mathcal T_H$, the maximal relation of the canonical system $H$.

The interface conditions $\beta^{(0)}(v)$ on the graph induce a boundary condition $\beta=(\beta_1 \: \beta_2)$ on the the canonical system $Ju'=-zHu$ at $x=1$. Define $\beta_1=(\tilde a_{ij})$ and $\beta_2=(\tilde b_{ij})$ as follows. Let $1 \le i \le k$. Note that $i$ can be written as $i=i_l(v)$ for a unique vertex $v$ (the notation is introduced in paragraph two of this section). Define   
\[
\tilde a_{ij}= \begin{cases} -a_{ij}(v) & j=i_m(v) \textrm{ or } j=j_q(v) \\ 0 & \textrm{ else} \end{cases} 
\]
and
\[
\tilde b_{ij}= \begin{cases} b_{ij}(v) & j=i_m(v) \textrm{ or } j=j_q(v) \\ 0 & \textrm{ else} \end{cases} .
\]
Suppose $k<i\le 2k$. So, $i-k=j_p(v)$ for a unique vertex $v$. Define 
\[
\tilde a_{ij}= \begin{cases} -a_{(i-k)j}(v) & j=i_m(v) \textrm{ or } j=j_q(v) \\ 0 & \textrm{ else} \end{cases} 
\]
and
\[
\tilde b_{ij}= \begin{cases} b_{(i-k)j}(v) & j=i_m(v) \textrm{ or } j=j_q(v) \\ 0 & \textrm{ else} \end{cases} .
\]
Note that $\beta \beta^{*}=I$ and $\beta J \beta^{*}=0$ due to the corresponding properties of $\beta^{(0)}(v)$. Let $\beta= (\beta_1 \: \beta_2)$. Define $\alpha_1=\frac{1}{\sqrt 2}\begin{pmatrix} I & I \\ 0 & 0 \end{pmatrix}$ and $\alpha_2=\frac{1}{\sqrt 2}\begin{pmatrix} 0 & 0 \\ I & -I \end{pmatrix}$, where $I \in \C^{k\times k}$. Introduce the boundary condition $\alpha=(\alpha_1 \: \alpha_2)$ at $0$. It is straightforward to check that $V{\mathcal S_G}V^*=\mathcal S_H^{\alpha, \beta}$, where $V=\tilde V \tilde N$, $\mathcal S_G$ is the self-adjoint relation introduced in the third paragraph of this section for the canonical systems on the graph, and $\mathcal S_H^{\alpha, \beta}$ is relation for the canonical system $H$ on $(0,1)$ defined as in \eqref{regcs}. So, the following theorem holds.

\begin{Theorem}
\label{compactGtoCS}
Let $G$ be a connected, compact graph with canonical systems $H_i$ defined along the edges, as above. The map $V=\tilde V \tilde N$ defined by \eqref{tildeN} and \eqref{tildeV} provides a unitary equivalence $V{\mathcal S_G}V^*=\mathcal S_H^{\alpha, \beta}$, where the canonical system $H$ on $(0,1)$ is defined by \eqref{splitH}, \eqref{tildeH}, and \eqref{bigCS}, with the boundary conditions $\alpha$ and $\beta$ described above. If $H$ is definite, then $\mathcal S_G$ is self-adjoint. In addition, if $\tilde G$ is the graph obtained from $G$ by inserting a vertex into the middle of each edge and the independent variables are changed so that $x\in (0,1)$ for each canonical system $H_i$, then $\tilde V{\mathcal T_{\tilde G}}{\tilde V}^*=\mathcal T_H$.
\end{Theorem}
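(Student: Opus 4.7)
The plan is to verify the three assertions in order. The principal assertion is the unitary equivalence $V \mathcal S_G V^* = \mathcal S_H^{\alpha,\beta}$; self-adjointness under the definiteness hypothesis will then follow from \cite[Corollary 5.7]{BHSW}, and the auxiliary statement $\tilde V \mathcal T_{\tilde G} \tilde V^* = \mathcal T_H$ will emerge in the course of the main proof. The preceding paragraphs have essentially staged everything: $\tilde N$ was already shown to be unitary and to conjugate $\mathcal T_G$ into the subset of $\mathcal T_{\tilde G}$ cut out by the midpoint-continuity conditions $f_{(i)0}^{(2)}(0)=Nf_{(i)0}^{(1)}(0)$, and $\tilde V$ is a permutation of coordinates via the unitary matrix $C$. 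The tasks remaining are (i) that $\tilde V$ intertwines the maximal relations $\mathcal T_{\tilde G}$ and $\mathcal T_H$; (ii) that $\alpha f_0(0)=0$ is equivalent to the midpoint-continuity conditions; (iii) that $\beta f_0(1)=0$ reproduces the original vertex interface conditions of $G$; and (iv) that $\alpha$ and $\beta$ satisfy \eqref{bc}.

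Task (i) reduces to the observation that $\tilde H$ is block diagonal, so $Ju'=-z\tilde Hu$ with the block-diagonal $J_{\mathrm{block}}$ decouples into the individual edge equations, and $C$ is chosen precisely so that $CJ_{\mathrm{block}}C^{*}$ equals the standard $J$ on $\C^{4k}$; the reindexing does not disturb the AC-representative structure. For (ii), a direct computation with $\alpha_1=\tfrac{1}{\sqrt{2}}\begin{pmatrix} I & I \\ 0 & 0 \end{pmatrix}$ and $\alpha_2=\tfrac{1}{\sqrt{2}}\begin{pmatrix} 0 & 0 \\ I & -I \end{pmatrix}$ shows that $\alpha f_0(0)=0$ is equivalent, for each edge $i$, to the pair $f_{(i)0}^{(1)}(0)_1+f_{(i)0}^{(2)}(0)_1=0$ and $f_{(i)0}^{(1)}(0)_2-f_{(i)0}^{(2)}(0)_2=0$, which is exactly $f_{(i)0}^{(2)}(0)=Nf_{(i)0}^{(1)}(0)$ since $N$ negates the first component and preserves the second. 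The identities $\alpha\alpha^{*}=I$ and $\alpha J\alpha^{*}=0$ are immediate from $\alpha_1\alpha_1^{*}+\alpha_2\alpha_2^{*}=I$ and $\alpha_1\alpha_2^{*}=0$.

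Task (iii) is the most delicate. I would first unwind $C$ to identify the components of $f(1)\in\C^{4k}$: positions $1,\ldots,k$ hold $-f_{(i)0}(-r_i)_1$ (the minus from the $N$ in \eqref{splitH}), positions $k+1,\ldots,2k$ hold $f_{(i)0}(r_i)_1$, positions $2k+1,\ldots,3k$ hold $f_{(i)0}(-r_i)_2$, and positions $3k+1,\ldots,4k$ hold $f_{(i)0}(r_i)_2$. Grouping the $2k$ rows of $\beta$ by vertex (rows $\{i_l(v)\}_{l=1}^{L(v)}$ from the initial-vertex contribution, together with rows $\{j_p(v)+k\}_{p=1}^{M(v)}$ from the terminal-vertex contribution), the definitions of $\tilde a_{ij}$ and $\tilde b_{ij}$ are arranged so that each vertex's block of rows of $\beta$, applied to $f(1)$, reproduces the interface equation $\beta_1^{(0)}(v)(\cdots)+\beta_2^{(0)}(v)(\cdots)=0$ at $v$ once the sign from $N$ and the minus in $\tilde a_{ij}$ are composed. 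The properties $\beta\beta^{*}=I$ and $\beta J\beta^{*}=0$ then follow blockwise from the corresponding identities for each $\beta^{(0)}(v)$. Finally, for the last assertions: $H\in L^1(0,1)$ is regular (the interval is bounded) and definite by hypothesis, so \cite[Corollary 5.7]{BHSW} makes $\mathcal S_H^{\alpha,\beta}$ self-adjoint, which $V$ transfers to $\mathcal S_G$. The principal obstacle throughout is the bookkeeping in task (iii): three layers of reindexing (the permutation $C$, the reflection $N$, and the vertex-to-global-edge correspondence) must be composed consistently so that each vertex-block of $\beta$ matches $\beta^{(0)}(v)$ on the nose.
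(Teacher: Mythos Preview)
Your proposal is correct and follows essentially the same route as the paper: the paper's ``proof'' consists of the construction of $\tilde N$, $\tilde V$, $H$, $\alpha$, $\beta$ in the paragraphs preceding the theorem, together with the one-line assertions that $\tilde V\mathcal T_{\tilde G}\tilde V^*=\mathcal T_H$, that $\beta\beta^*=I$ and $\beta J\beta^*=0$ follow blockwise from the properties of $\beta^{(0)}(v)$, and that ``it is straightforward to check that $V\mathcal S_GV^*=\mathcal S_H^{\alpha,\beta}$.'' Your tasks (i)--(iv) are exactly an unpacking of that straightforward check, and your verifications (the permutation identity $CJ_{\mathrm{block}}C^*=J$, the equivalence of $\alpha f_0(0)=0$ with the midpoint condition $f_{(i)0}^{(2)}(0)=Nf_{(i)0}^{(1)}(0)$, the vertex-block decomposition of $\beta$, and the appeal to \cite[Corollary~5.7]{BHSW} for self-adjointness) are all in order --- indeed more explicit than what the paper supplies.
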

\begin{remark}
As a consequence, the Green function, the resolvent, an $m$ function, and the associated spectral representation for $\mathcal S_G$ can be obtained via Theorems \ref{regm}, \ref{regres}, and \ref{regrep}, assuming that $H$ is definite. The problems become trivial when $H$ is not definite. See Proposition \ref{HGnotdef} and its proof.  
\end{remark}
\begin{remark}
By Corollary \ref{regHS}, the resolvent of $\mathcal S_G$ is Hilbert-Schmidt. In particular, the spectrum $\sigma=\{t_j\}$ is purely discrete and $\sum_j \frac{1}{1+t_j^2}<\infty$.
\end{remark}
\begin{remark}
Note that the unitary equivalence $V$ is obtained by a basic rewriting of the original equations on the graph. The $m$ function, the Green function, and the spectral representation are defined in terms of solutions of the equations $Ju'=-zH_iu$. It might also be of interest to consider the modified versions of Theorems \ref{regm}, \ref{regres}, and \ref{regrep} where, instead of starting with solutions $u$ and $v$ with initial conditions given at the left endpoint, which corresponds to the vertices inserted into the graph, one starts with solutions $u$ and $v$ with the initial conditions $u(b,z)=J\beta^*$ and $v(b,z)=\beta^*$ given at the right endpoint, which corresponds to the original vertices.   
\end{remark}
\begin{remark}
The theorem also provides a correspondence between self-adjoint interface conditions, not necessarily local, on the graph $G$ and boundary conditions for the canonical system $H$. The self-adjoint interface conditions on $G$ are in one-to-one correspondence with the self-adjoint boundary conditions $\beta$ at $x=1$ for $H$. The boundary conditions $\beta$ obtained via the above procedure from some matrices $\beta^{(0)}$ correspond exactly to the local interface conditions; boundary conditions $\beta$ that are not of this form correspond to non-local interface conditions on $G$. Note that although one could also consider different boundary conditions $\alpha$ at $x=0$ or coupled boundary conditions for $H$, these would correspond to changes of interface conditions among the inserted vertices in $\tilde G$ or coupling between the inserted vertices and the original vertices.      
\end{remark}
\begin{remark}
The choice that the graph had canonical systems $H_i(x)\in\R^{2\times 2}$ was made for purely cultural reasons. All of the above procedure can be easily generalized to any canonical systems $H_i(x)\in\C^{2n\times 2n}$ on a graph. One could even have different orders on different edges.
\end{remark}

\begin{Proposition}
\label{HGnotdef}
Let $G$ be a compact graph with canonical systems $H_i$ on $G$, as in the first paragraph of this section, and define $H$ by \eqref{splitH}, \eqref{tildeH}, and \eqref{bigCS}. If $H$ is not definite, then $D(\mathcal T_G)$, the domain of the maximal relation for the canonical systems $H_i$ on $G$, is finite-dimensional.
\end{Proposition}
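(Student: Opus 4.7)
The plan is to use the correspondence between $H$ and the individual $H_i$ to translate non-definiteness of $H$ into a concrete degeneracy of at least one edge system, and then to bound the dimension of the domain via the direct-sum structure of $\mathcal T_G$.

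First, since $H=C\tilde HC^*$ with $C$ unitary and $\tilde H=\bigoplus_{i=1}^k H_i^{(1)}\oplus\bigoplus_{i=1}^k H_i^{(2)}$ block-diagonal, $H$ is non-definite on $(0,1)$ if and only if at least one diagonal block $H_{i_0}^{(j_0)}$ is non-definite on $(0,1)$. Unwinding the definitions in \eqref{splitH} and using the invertibility of $N$, this produces a nonzero vector $w\in\C^2$ and an edge index $i_0$ such that $H_{i_0}(x)w=0$ for almost every $x$ in one of the two halves $(-r_{i_0},0)$ or $(0,r_{i_0})$.

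Next, since $\mathcal T_G=\bigoplus_i\mathcal T_{H_i}$ is a direct sum of the edge maximal relations with no coupling, $D(\mathcal T_G)=\bigoplus_i D(\mathcal T_{H_i})$, so it is enough to bound each $D(\mathcal T_{H_i})$. For any $(f,g)\in\mathcal T_{H_i}$ with AC representative $f_0$, a common null vector $w_i$ of $H_i$ on a subinterval gives $w_i^*Jf_0'=-w_i^*H_ig=0$ there, forcing $w_i^*Jf_0$ to be constant on that subinterval. Combining this Lagrange-type linear constraint with the pointwise collapse of $L_{H_i}^2$ along the null directions of $H_i(x)$ should reduce $D(\mathcal T_{H_i})$ to a space parametrized by finitely many scalars, namely a few boundary values at the endpoints $\pm r_i$ and a few constants of integration coming from $w_i^*Jf_0$.

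The main obstacle I anticipate is that the non-definiteness of the big system $H$ only gives us a null vector on one half of a single edge, while the other halves and other edges may still support infinite-dimensional Sobolev-type spaces of AC representatives; to get around this, the argument has to exploit that the pointwise kernel identifications of $L_{H_i}^2$, together with the AC constraint $Jf_0'=-H_ig$, leave only finitely many free parameters per edge once the null-vector structure of $H_i$ is taken into account. Propagating the degeneracy edge-by-edge and combining it carefully with the quotient structure of $L_{H_i}^2$ is where the substantive work lies.
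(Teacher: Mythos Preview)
There is a genuine gap. Your first step—that $H$ is non-definite iff at least one block $H_{i_0}^{(j_0)}$ is non-definite—is the correct reading of a block-diagonal direct sum, but it is strictly weaker than what the paper invokes. The paper opens with the claim that $H$ is not definite iff \emph{each} $H_i^{(j)}$ is of the rank-one form $h_{ij}(x)P_{\theta_{ij}}$; with that in hand, since $D(\mathcal T_G)$ embeds into $\bigoplus_{i,j} D(\mathcal T_{H_i^{(j)}})$, it suffices to show $\dim D(\mathcal T_{H_\theta})\le 1$ for a single $H_\theta=hP_\theta$ on $(0,1)$, which the paper does in three lines via $P_\theta f_0'=0$.

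Your proposed workaround—propagating degeneracy from one half-edge to the rest via the AC constraint and the $L^2_{H_i}$ quotient—cannot succeed, because $\mathcal T_G=\bigoplus_i\mathcal T_{H_i}$ has no coupling between distinct edges. If some other edge $i'$ carries a definite $H_{i'}$ (e.g.\ $H_{i'}\equiv I$), then $D(\mathcal T_{H_{i'}})$ is already infinite-dimensional and nothing on edge $i_0$ constrains it. The step you are missing is exactly the paper's stronger assertion that non-definiteness of the assembled $H$ forces \emph{every} half-edge system to be of projection type; without that, the obstacle you flagged in your last paragraph is not a technicality to be worked around but a hard obstruction.
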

\begin{proof}
$H$ is not definite if and only if each $H_i^{(j)}$ is of the form $H_i^{(j)}(x)=h_{ij}(x)P_{\theta_{ij}}$, where $P_\theta= \begin{pmatrix} \cos^2\theta & \sin\theta\cos\theta \\ \sin\theta\cos\theta & \sin^2\theta \end{pmatrix}$ \cite[Section 1.2]{Rembook}. So, it suffices to show that $D(\mathcal T_{H_\theta})$ is finite-dimensional for any $H_{\theta}(x)=h(x)P_{\theta}$ on $(0,1)$. Suppose $(f,g) \in \mathcal T_{H_\theta}$. Let $f_0$ be an absolutely continuous representative of $f$ such that $Jf_0'=-H_{\theta}g$. Since $f_0'(x)=h(x)JP_\theta g(x)$, $P_\theta f_0'(x)=0$. Hence, $P_\theta f_0(x)=c\begin{pmatrix} \cos\theta \\ \sin\theta \end{pmatrix}$ for some constant $c \in \C$. This means that $f$ and $c\begin{pmatrix} \cos\theta \\ \sin\theta \end{pmatrix}$ represent the same element in $L_{H_\theta}^2(0,1)$. Hence, $D(\mathcal T_{H_\theta})= \{ c\begin{pmatrix} \cos\theta \\ \sin\theta \end{pmatrix} : c\in \C \}$.
\end{proof}
\begin{remark}
In this context, it follows that the spectrum consists of a finite number, possibly zero, of eigenvalues if $H$ is not definite. The eigenvalue equations can be solved easily, and, not surprisingly, checking the conditions that the solutions satisfy the interface conditions and do not represent the zero element in the Hilbert space reduces to a finite-dimensional problem in linear algebra.
\end{remark} 

\section{Canonical systems on non-compact graphs}
Consider now a canonical system on a non-compact graph. Again, assume that the graph $G$ is connected, has finitely many edges and vertices, and that there is at most one edge between any two vertices. Some edges have only one vertex; these are pictured as half lines. For all other edges, assume that they connect two distinct vertices. Label the edges $E_1,\, \dots,\, E_k$, with $E_1, \, \dots, \, E_{\tilde k}$ being the edges with two vertices. With each edge $E_i$, $1\le i \le \tilde k$, associate a differential equation $Ju'(x) = -zH_i(x)u(x)$ with $x\in (-r_i,r_i)$, $H_i(x)\in\R^{2\times 2}$, $H_i \in L^1(-r_i,r_i)$, and $H_i(x)\ge 0$. For each edge $E_i$, $\tilde k < i \le k$, associate a canonical system $Ju'(x) = -zH_i(x)u(x)$ with $x\in (-1,\infty)$, $H_i(x)\in\R^{2\times 2}$, $-1$ a regular endpoint, and $H_i \notin L^1(-1,\infty)$. Recall from Section 2 that the canonical systems are always assumed to be locally integrable.

It can be assumed, without loss of generality, that $H_i$ is definite on $(0,\infty)$ for all $\tilde k <i\le k$. To see this, suppose that $H_i$, $i>\tilde k$, is not definite on $(0,\infty)$. Then there exist $\theta \in \R$ and $h: (0,\infty) \to [0,\infty)$ such that, for almost all $x>0$, $H_i(x)=h(x)P_\theta$, where $P_\theta= \begin{pmatrix} \cos^2\theta & \sin\theta\cos\theta \\ \sin\theta\cos\theta & \sin^2\theta \end{pmatrix}$ and $h \notin L^1(0,\infty)$ \cite[Section 1.2]{Rembook}. Suppose $Jf'=-H_ig$ with $f,g \in L_{H_i}^2(-1,\infty)$ and $f$ absolutely continuous. So, $f'=hJPg$ on $(0,\infty)$. Since $PJP=0$, $Pf(x)=Pf(0)$ for all $x>0$. So, $\int_0^\infty h(x)f(0)^*Pf(0) \, dx$ is finite. This is only possible if $Pf(0)=0$, which also implies that $f\big |_{(0,\infty)}=0 \in L_{H_i}^2(0,\infty)$. Thus, the half line $(0,\infty)$ could be replaced by a vertex at $0$ with the interface condition $(\cos\theta \: \sin\theta)f(0)=0$. So, assume that $H_i$ is definite on $(0,\infty)$ for all $\tilde k <i\le k$.

Introduce interface conditions at the vertices in the same way as in paragraph two of the previous section. The only modification happens at the vertices connected to at least one half line. Obviously, when evaluating functions to check whether they satisfy the interface conditions at such a vertex, the functions on the half-lines are evaluated at $x=-1$; such a vertex $v$ is defined as the initial vertex of such an edge. The same notation for the interface conditions from the previous section will be used in this section. Also, make the obvious modifications to define the maximal relation $\mathcal T_G$ and $\mathcal S_G$. No conditions at $\infty$ (other than being in $L_{H_i}^2$) for the functions on the half lines are imposed in the definition of $\mathcal S_G$; $\mathcal S_G$ is self-adjoint, which will be proved below.

Again, such a problem can be rewritten as a single, higher order canonical system. The idea is to combine the techniques of the previous section with the following theorem, which is well-known in the case $n=1$ \cite{Rembook}.
\begin{Theorem}
\label{singhalfline}
Let $H(x) \in \C^{2n \times 2n}$ be a canonical system on $(0,\infty)$, with $0$ a regular endpoint. Let $\alpha=(\alpha_1 \: \alpha_2)\in \C^{n\times 2n}$ and $\beta=(\beta_1 \: \beta_2) \in \C^{n\times 2n}$ satisfy \eqref{bc}. Suppose that $H(x)=\begin{pmatrix} \beta_1^*\beta_1 & \beta_1^*\beta_2 \\ \beta_2^*\beta_1 & \beta_2^*\beta_2 \end{pmatrix}$ for $x>L$. Let 
\begin{align*}
\mathcal S= \{ &(f,g) \in \left( L_H^2(0,\infty) \right)^2 : f \textrm{ has an AC representative } f_0 \textrm{ such that } \\
 &Jf_0'(x)=-H(x)g(x) \textrm{ for a.e. } x\in (0,\infty) \textrm{ and } \alpha f_0(0)=0 \} .
\end{align*}
Then $\mathcal S= \mathcal S^{\alpha, \beta}(0,L) \oplus (0, L_H^2(L,\infty))$. If $H$ is definite on $(0,L)$, then the deficiency indices of $H$ on $(0,\infty)$ are both $n$,  $S=S^{\alpha,\beta}$, and the $m$ functions for $\mathcal S$ and $\mathcal S^{\alpha, \beta}(0,L)$ are the same.
\end{Theorem}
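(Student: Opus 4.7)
The plan exploits the special form $H(x)=\beta^*\beta$ on $(L,\infty)$ together with $\beta J\beta^*=0$ from \eqref{bc}. The preliminary observation is that for any $(f,g)\in\mathcal T$ and any $x>L$, since $Jf_0'=-\beta^*\beta g$ and $J^{-1}=-J$, one obtains $\beta f_0'=(\beta J\beta^*)\beta g=0$, so $\beta f_0$ is constant on $(L,\infty)$. But the $L_H^2(L,\infty)$-norm of $f$ equals
\[
\left(\int_L^\infty |\beta f_0(x)|^2\,dx\right)^{1/2} ,
\]
and no nonzero constant is square integrable on $(L,\infty)$; hence $\beta f_0(L)=0$ and $f$ represents the zero element of $L_H^2(L,\infty)$. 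In particular, the restriction of any $(f,g)\in\mathcal S$ to $(0,L)$ automatically satisfies the $\beta$-boundary condition at $L$.

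From this observation I would obtain the direct-sum decomposition by splitting $g=g_1+g_2$ at $x=L$. The pair $(f|_{(0,L)},g_1)$ belongs to $\mathcal S^{\alpha,\beta}(0,L)$ and, extended to $(0,\infty)$ by the constant $f_0(L)\in\ker\beta$, lies in $\mathcal S$ and represents the same $L_H^2$-element as $f$. The pair $(0,g_2)$ lies in $\mathcal S$ with AC representative $h$ that is zero on $(0,L)$ and equals $J\int_L^x\beta^*\beta g_2(s)\,ds$ on $(L,\infty)$: a direct computation using $J^2=-I$ gives $Jh'=-Hg_2$, the condition $\alpha h(0)=0$ holds trivially, and $\beta h\equiv 0$ since $\beta J\beta^*=0$, so $h$ represents the zero element of $L_H^2$. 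The two summands sum to $(f,g)$ and intersect trivially.

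Next, assuming definiteness on $(0,L)$, I would count deficiency indices by solving $Jf'=\mp iHf$ explicitly. On $(L,\infty)$ the equation becomes $f'=\pm iJ\beta^*\beta f$, whose coefficient satisfies $(iJ\beta^*\beta)^2=-J\beta^*(\beta J\beta^*)\beta=0$; hence every solution is of the form $f(x)=(I\pm iJ\beta^*\beta(x-L))f(L)$ and $\beta f\equiv\beta f(L)$. A global solution on $(0,\infty)$ thus lies in $L_H^2$ if and only if $f(L)\in\ker\beta$, an $n$-dimensional subspace, and definiteness on $(0,L)$ prevents distinct solutions from coinciding as $L_H^2$-elements. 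So both deficiency indices equal $n$, $\mathcal S$ coincides with $\mathcal S^\alpha$ of \eqref{lpcs} and is self-adjoint, and its operator part equals the operator part of $\mathcal S^{\alpha,\beta}(0,L)$ because the other summand $(0,L_H^2(L,\infty))$ is purely multivalued. Applying the same nilpotency computation to any solution $f_m(\cdot,z)=v+um(z)$ of \eqref{can} shows that $\beta f_m(\cdot,z)$ is constant on $(L,\infty)$, so the columns of $f_m$ lie in $L_H^2(0,\infty)$ if and only if $\beta f_m(L,z)=0$, which is exactly the condition from Theorem \ref{regm}; both $m$ functions are therefore given by $m(z)=-(\beta u(L,z))^{-1}\beta v(L,z)$. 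The main subtlety of the plan is the bookkeeping at $x=L$ between AC representatives and $L_H^2$-classes; the nilpotency of $iJ\beta^*\beta$ makes all computations on $(L,\infty)$ essentially algebraic.
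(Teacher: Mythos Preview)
Your proposal is correct and follows essentially the same route as the paper. The paper phrases the key observation as $PJP=0$ (with $P=\beta^*\beta$) implying $Pf_0$ is constant on $(L,\infty)$, while you equivalently use $\beta J\beta^*=0$ to get $\beta f_0$ constant; your explicit nilpotency computation $(iJ\beta^*\beta)^2=0$ for the deficiency-index count is a slightly more concrete version of the paper's argument that $L_H^2$-solutions are exactly those satisfying $\beta f(L)=0$, but the underlying mechanism is identical.
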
 
\begin{proof}
Let $P=\begin{pmatrix} \beta_1^*\beta_1 & \beta_1^*\beta_2 \\ \beta_2^*\beta_1 & \beta_2^*\beta_2 \end{pmatrix}$. Suppose that $(f,g) \in \mathcal S$, and take an absolutely continuous representative $f_0$ of $f$ such that $\alpha f_0(0)=0$ and $Jf_0'=-Hg$ on $(0,\infty)$. So, since $Jf_0'=-Pg$ on $(L,\infty)$ and $PJP=0$, $Pf_0(x)=Pf_0(L)$ for all $x>L$. Since $\int_L^\infty f_0(L)^*Pf_0(L) \, dx=\int_L^\infty f(x)^*Pf(x) \, dx < \infty$, 
$Pf_0(L)=0$. This shows that $(f\big |_{(0,L)}, g\big |_{(0,L)}) \in \mathcal S^{\alpha, \beta}(0,L)$ and $Pf=0$ almost everywhere on $(L,\infty)$.

Now, suppose that $(\tilde f, \tilde g) \in \mathcal S^{\alpha, \beta}(0,L)$. Take an absolutely continuous representative $\tilde f_0$ of $f$ such that $\alpha \tilde f_0(0)=0=\beta \tilde f_0(L)$ and $J\tilde f_0'=-H\tilde g$ on $(0,L)$. Define
\[
(f(x),g(x))=\begin{cases} (\tilde f_0(x),\tilde g(x)) & x \le L\\ (f_0(L),0) & x> L \end{cases} .
\]
Since $P\tilde f_0(L)=0$, $(f,g) \in \mathcal S$. 

Let $(0, \tilde g) \in (0, L_H^2(L,\infty))$ be given. Define
\[
g(x)=\begin{cases} 0 & x \le L\\ \tilde g(x)) & x> L \end{cases} 
\]
and $f(x)= \int_0^x JH(y)g(y) \, dy$. Then $Jf'=-Hg$, $f(x)=0$ for $x\in [0,L]$, and $f(x)= \int_L^x JPg(y) \, dy$ for $x>L$. So, since $PJP=0$, $f$ represents $0$ in $L_H^2(0,\infty)$. Since also $\alpha f(0)=0$, $(f,g) \in \mathcal S$, and $g \in \mathcal S(0)$.

Suppose that $H$ is definite on $(0,L)$. The argument in the first paragraph shows that solutions of $Ju'=-zHu$ are in $L_H^2(0,\infty)$ if and only they satisfy the boundary condition at $L$. Since $H$ is integrable on $(0,L)$, there are $2n$ linearly independent solutions of $Ju'=-iHu$ in $L_H^2(0,L)$ (and of $Ju'=iHu$). Hence, there $n$ linearly independent solutions in $L_H^2(0,L)$ that satisfy the boundary condition at $L$. So, there are (exactly) $n$ linearly independent solutions in $L_H^2(0,\infty)$. Since $H$ is definite on $(0,\infty)$, this means that the deficiency indices are $n$. In particular, $\mathcal S$ is self-adjoint. So, since $L_H^2(L,\infty)\subset \mathcal S(0)$, $S=S^{\alpha,\beta}$.

Let $m_{\alpha}$ and $m_{\alpha,\beta}$ be the $m$ function for $\mathcal S$ and $\mathcal S^{\alpha, \beta}(0,L)$, respectively. Let $u$ and $v$ be as in the definition of the $m$ functions in Theorems \ref{regm} and \ref{lpm}. Fix $z \in \C/\R$. So, $m_{\alpha}(z)$ is the unique matrix such that $f(x,z)=v(x,z)+u(x,z)m_{\alpha}(z)$ is in $L_H^2(0,\infty)$, and $m_{\alpha,\beta}(z)$ is the unique matrix such that $\tilde f(x,z)=v(x,z)+u(x,z)m_{\alpha,\beta}(z)$ satisfies $\beta \tilde f(L,z)=0$. Since $Jf'=-zHf$ and $f\in L_H^2(0,\infty)$, the argument in the first paragraph shows that $Pf(L,z)=0$, which is equivalent to $\beta f(L,z)=0$. By the uniqueness of $m_{\alpha,\beta}(z)$, $m_{\alpha}(z)=m_{\alpha,\beta}(z)$. 
\end{proof}

In other words, a problem on a bounded interval with a boundary condition $\beta$ at the right endpoint is equivalent to a problem on $(0,\infty)$ that ends with a singular half line, where $H$ is identically $\begin{pmatrix} \beta_1^*\beta_1 & \beta_1^*\beta_2 \\ \beta_2^*\beta_1 & \beta_2^*\beta_2 \end{pmatrix}$. One immediate consequence of this and Theorem \ref{compactGtoCS} is that canonical systems on compact graphs can be rewritten as canonical systems on $(0,\infty)$. Strictly speaking, Theorem \ref{singhalfline} is not used here to deal with non-compact graphs, but the ideas in the theorem and its proof provide the motivation for some of the development below.

The outline of the argument that a problem on a non-compact graph can be converted to a higher order system is as follows. Insert a vertex into the middle of each edge with finite length and at $x=0$ on the edges with infinite length. Call the resulting graph $\tilde G$. Change variables on all of the finite length edges in $\tilde G$ so that $0$ corresponds to the inserted vertices and $1$ to the original vertices. Impose Neumann-Kirchhoff interface conditions at all of the inserted vertices. Collect all of the canonical systems on the finite edges, including the ones coming from the initial intervals of the half lines, into one canonical system $H_0$ and rewrite the original interface conditions in terms of a boundary condition $\beta$ at $1$ for $H_0$, as in the previous section. One can then combine the canonical systems on the half lines $(0,\infty)$ with $H_0$ on $(0,1)$ and with $\begin{pmatrix} \beta_1^*\beta_1 & \beta_1^*\beta_2 \\ \beta_2^*\beta_1 & \beta_2^*\beta_2 \end{pmatrix}$ on $(1,\infty)$ into one canonical system with a boundary condition $\alpha$ at $0$ determined by the Neumann-Kirchhoff interface conditions at the inserted vertices.

More carefully, let $N=\begin{pmatrix} -1 & 0 \\ 0 & 1 \end{pmatrix}$ as before, and define $r_i=1$ for $\tilde k < i \le k$. Set
\begin{equation}
\label{splitH1}
H_i^{(1)}(x)=r_i N H_i(-r_ix)N
\end{equation}
for $x\in (0,1)$ and
\begin{equation}
\label{splitH2}
H_i^{(2)}(x)=r_iH_i(r_ix)
\end{equation}
for $x\in (0,1)$ if $1\le i \le \tilde k$  and for $x\in (0,\infty)$ if $\tilde k < i \le k$. Again, the map $\tilde N: \bigoplus_{i=1}^{k} L_{H_i}^2 \to \bigoplus_{j=1}^2 \bigoplus_{i=1}^{k} L_{H_i^{(j)}}^2$ defined by 
\begin{equation}
\label{tildeNlp}
\tilde Nf=\left(\left(f_{(i)}^{(1)}\right)_{i=1}^k, \left(f_{(i)}^{(2)}\right)_{i=1}^k\right) ,
\end{equation}
where $f_{(i)}^{(1)}(x)=Nf_{(i)}(-r_ix)$ and $f_{(i)}^{(2)}(x)=f_{(i)}(r_ix))$, is unitary and $\tilde N \mathcal T_G {\tilde N}^*$ is the subset of $\mathcal T_{\tilde G}$ defined by the condition that $Nf_{(i)0}^{(1)}(0)=f_{(i)0}^{(2)}(0)$.

The matrix $\beta=(\beta_1 \: \beta_2)$ encoding the interface conditions $\beta^{(0)}(v)$ on the graph is defined in essentially the same way as in the previous section. Let $\beta_1=(\tilde a_{ij})\in \C^{(k+\tilde k)\times (k+\tilde k)}$ and $\beta_2=(\tilde b_{ij})\in \C^{(k+\tilde k)\times (k+\tilde k)}$. Let $1 \le i \le k$. Then $i$ can be written as $i=i_l(v)$ for a unique vertex $v$ (see paragraph two of the previous section for the notation). Set   
\[
\tilde a_{ij}= \begin{cases} -a_{ij}(v) & j=i_m(v) \textrm{ or } j=j_q(v) \\ 0 & \textrm{ else} \end{cases} 
\]
and
\[
\tilde b_{ij}= \begin{cases} b_{ij}(v) & j=i_m(v) \textrm{ or } j=j_q(v) \\ 0 & \textrm{ else} \end{cases} .
\]
Suppose $k<i \le k+\tilde k$. Then $i-k=j_p(v)$ for a unique vertex $v$. Define 
\[
\tilde a_{ij}= \begin{cases} -a_{(i-k)j}(v) & j=i_m(v) \textrm{ or } j=j_q(v) \\ 0 & \textrm{ else} \end{cases} 
\]
and
\[
\tilde b_{ij}= \begin{cases} b_{(i-k)j}(v) & j=i_m(v) \textrm{ or } j=j_q(v) \\ 0 & \textrm{ else} \end{cases} .
\]

For $x \in (0,1)$ set 
\begin{equation}
\label{tildeHa}
\tilde H(x) = \left( \bigoplus_{i=1}^k H_i^{(1)}(x) \right) \oplus \left( \bigoplus_{i=1}^k H_i^{(2)}(x) \right) ,
\end{equation}
and let $C_d\in \C^{d \times d}$, for even $d$, be the matrix defined by 
\[
C_de_i=\begin{cases} e_{(i+1)/2} & i \textrm{ odd} \\ e_{\frac{d+i}{2}} & i \textrm{ even} \end{cases} 
\]
for $1\le i \le d$.
Define
\begin{equation}
\label{bigCSa}
H(x)=C_{4k}\tilde{H}(x)C_{4k}^* 
\end{equation} 
for $x\in (0,1)$.  To define $H$ on $(1,\infty)$, first define another change of basis $D$ by $De_{2k-i}=e_{3k+\tilde k -i}$ for $0\le i \le k-\tilde k-1$, $De_{2k+i}=e_{k+\tilde k+i}$ for $1<i \le k+\tilde k$, and $De_i=e_i$ for $1\le i \le k+\tilde k$ and $3k+2\tilde k<i\le 4k$. In other words, $D$ shifts the entries corresponding to the first components of solutions on the half lines $(0,\infty)$ down so that they collectively sit above the second components. Then set, for $x\in (1,\infty)$, 
\begin{equation}
\label{bigCSb}
H(x)=D^*\left( \begin{pmatrix} \beta_1^*\beta_1 & \beta_1^*\beta_2 \\ \beta_2^*\beta_1 & \beta_2^*\beta_2 \end{pmatrix} \oplus \left({C_{2(k-\tilde k)}}\bigoplus_{i=\tilde k +1}^{k}H_i^{(2)}(x){C^*_{2(k-\tilde k)}}\right)\right)D .
\end{equation}

Let $Q: \C^{4k} \to \C^{2k+2\tilde k}$ be the map such that, for all $v\in \C^{4k}$, $Qv$ deletes the entries $v_i$ for $k+\tilde k +1\le i \le 2k$ and $3k+2\tilde k <i \le 4k$, the entries corresponding to the half lines $(0,\infty)$, and let $Q_\perp : \C^{4n} \to \C^{2k-2\tilde k}$ be the map such that $Q_\perp v$ deletes the entries $v_i$ for $1\le i \le k+\tilde k$ and $2k< i \le 3k+2\tilde k$.

\begin{Proposition}
\label{singhalfline2}
Let $H$ be defined by \eqref{bigCSa} and \eqref{bigCSb}, and consider the relation
\begin{align*}
\mathcal S_H= \{ &(f,g) \in \left( L_H^2(0,\infty) \right)^2 : f \textrm{ has an AC representative } f_0 \textrm{ such that } \\
 &Jf_0'(x)=-H(x)g(x) \textrm{ for a.e. } x\in (0,\infty) \textrm{ and } \alpha f_0(0)=0 \} ,
\end{align*}
where $\alpha_1=\frac{1}{\sqrt 2}\begin{pmatrix} I & I \\ 0 & 0 \end{pmatrix}$ and $\alpha_2=\frac{1}{\sqrt 2}\begin{pmatrix} 0 & 0 \\ I & -I \end{pmatrix}$ with $I \in \C^{k\times k}$. Set 
\begin{align*}
\mathcal S_1= \{ &(f,g) \in \mathcal S_H : PQf_0(x)=0 \textrm { for all } x\ge 1 \textrm{ and} \\ 
&Qg \big |_{(1,\infty)}=0 \in L_P^2(1,\infty) \} ,
\end{align*}
where $P=\begin{pmatrix} \beta_1^*\beta_1 & \beta_1^*\beta_2 \\ \beta_2^*\beta_1 & \beta_2^*\beta_2 \end{pmatrix}$, and
\begin{align*}
\mathcal S_2= \{ &(0,g) \in \left( L_H^2(0,\infty) \right)^2 : Q_\perp g \big |_{(1,\infty)}=0 \in L_{H_\infty}^2(1,\infty) \textrm { and}\\
 & g\big |_{(0,1)}=0 \in L_H^2(0,1) \} ,
\end{align*}
where $H_{\infty}(x)={C_{2(k-\tilde k)}}\bigoplus_{i=\tilde k +1}^{k}H_i^{(2)}(x){C^*_{2(k-\tilde k)}}$. Then $\mathcal S_H=\mathcal S_1 \oplus \mathcal S_2$. 
\end{Proposition}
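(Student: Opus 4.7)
The plan is to mimic the first half of the proof of Theorem \ref{singhalfline}, exploiting the observation that, on $(1,\infty)$, the canonical system $H$ decouples, after the permutation $D$, into a block $P$ with $PJP=0$ (the ``singular half line'' block encoding the interface condition $\beta$) and a block $H_\infty$ containing the genuine half line systems $H_i^{(2)}$, $i>\tilde k$. All of the action happens in the $P$ block.

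First I would verify that $\mathcal S_1\cap \mathcal S_2=\{0\}$: any element in both has $f=0$, $g\big|_{(0,1)}=0$, $Q_\perp g\big|_{(1,\infty)}=0$ in $L_{H_\infty}^2$, and $Qg\big|_{(1,\infty)}=0$ in $L_P^2$, which forces $g=0$ in $L_H^2(0,\infty)$. Next, I would show that any $(f,g)\in\mathcal S_H$ automatically satisfies $PQf_0(x)=0$ for all $x\ge 1$. Indeed, because $D$ conjugates the block decomposition of $H$ into separate $P$ and $H_\infty$ canonical systems, the equation $Jf_0'=-Hg$ restricted to the $P$ block on $(1,\infty)$ reads $J(Qf_0)'=-P(Qg)$, so $(Qf_0)'=JP(Qg)$ and hence $(PQf_0)'=PJPQg=0$, using the identity $PJP=\beta^*\beta J\beta^*\beta=0$ that follows from $\beta J\beta^*=0$. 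Thus $PQf_0$ is constant on $[1,\infty)$; since $P$ is an orthogonal projection (because $\beta\beta^*=I$), $(Qf_0)^*P(Qf_0)=\|PQf_0\|^2$, and integrability in $L_P^2(1,\infty)$ forces this constant to vanish.

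To realize the decomposition, given $(f,g)\in\mathcal S_H$ I would define $g_2$ as $g$ cut off to the $P$-block on $(1,\infty)$ (and zero elsewhere), and $g_1=g-g_2$. Setting $h_0(x)=0$ on $(0,1)$ and $h_0(x)=\int_1^x JHg_2\,ds$ on $(1,\infty)$ gives an absolutely continuous function satisfying $Jh_0'=-Hg_2$, $\alpha h_0(0)=0$, and, on the $P$-block, $Ph_0=\int_1^x PJP(Qg_2)\,ds=0$, so $h_0$ represents $0$ in $L_H^2(0,\infty)$. Hence $(0,g_2)\in \mathcal S_H$, and by construction $Q_\perp g_2\big|_{(1,\infty)}=0$ and $g_2\big|_{(0,1)}=0$, so $(0,g_2)\in\mathcal S_2$. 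Then $f_0-h_0$ is an absolutely continuous representative of $f$ with $J(f_0-h_0)'=-Hg_1$ and $\alpha(f_0-h_0)(0)=0$; by the previous paragraph $PQf_0=0$ on $[1,\infty)$, and $PQh_0=0$ by the same computation, so $PQ(f_0-h_0)=0$ there. Since $g_1$ is zero on the $P$-block of $(1,\infty)$ by construction, $Qg_1\big|_{(1,\infty)}=0$ in $L_P^2$. Therefore $(f,g_1)\in\mathcal S_1$, and $(f,g)=(f,g_1)+(0,g_2)\in\mathcal S_1+\mathcal S_2$.

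The only step requiring real care is the block decomposition on $(1,\infty)$: one must check that the permutation $D$ was designed precisely so that $JH=J\cdot D^*(P\oplus H_\infty)D$ acts on $Qf_0$ and $Q_\perp f_0$ independently, with the same canonical $J$ on each block; this is essentially built into the definitions in \eqref{bigCSb} and in the maps $Q,Q_\perp$, but writing it out cleanly is the main bookkeeping chore. Everything else then reduces to the identities $P^2=P$ and $PJP=0$, exactly as in Theorem \ref{singhalfline}.
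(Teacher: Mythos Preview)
Your approach is essentially the paper's: both use $PJP=0$ to show $PQf_0$ is constant on $[1,\infty)$, invoke square-integrability to force it to vanish, and then split $g$ on $(1,\infty)$ into its $P$-block and $H_\infty$-block parts. The paper defines the $\mathcal S_1$ representative directly by freezing $Qf_0$ at $Qf_0(1)$ on $[1,\infty)$, while you build $h_0$ first and take $f_0-h_0$; a short computation shows these give the same function. One small omission: you check $\mathcal S_1\cap\mathcal S_2=\{0\}$, but the $\oplus$ here is an \emph{orthogonal} direct sum, and the paper verifies $\langle(f_1,g_1),(0,g_2)\rangle=0$ explicitly; this is immediate from the block structure (on $(1,\infty)$ the $P$-part of $g_1$ and the $H_\infty$-part of $g_2$ both vanish in their respective $L^2$ spaces), but you should say it.
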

\begin{proof}
First, to check that $\mathcal S_2 \subset \mathcal S_H$, let $(0,g)\in \mathcal S_2$ be given, and define $f(x)=\int_0^x JH(y)g(y) \, dy$. Then $f(x)=0$ for $0\le x \le 1$ since $g\big |_{(0,1)}=0 \in L_H^2(0,1)$. For $x>1$, $Q_\perp f(x)=0$ since $Q_\perp g \big |_{(1,\infty)}=0 \in L_{H_\infty}^2(1,\infty)$ and $f(1)=0$. Since $PJP=0$, $Qf\big |_{(1,\infty)}=0\in L_P^2(1,\infty)$. Hence, $f=0\in L_H^2(0,\infty)$, $f$ is absolutely continuous, $Jf'=-Hg$, and $\alpha f(0)=0$. So, $(0,g)=(f,g)\in \mathcal S_H$.

Now, let $(f,g)\in \mathcal S_H$ be given. Define $f_1$ by setting $f_1(x)=f_0(x)$ for $x\in (0,1)$, $Qf_1(x)=Qf_0(1)$ for $x\in [1,\infty)$, and $Q_\perp f_1(x)=Q_\perp f_0(x)$ for $x\in [1,\infty)$. Define $g_1$ by setting $g_1(x)=g(x)$ for $x\in (0,1]$, $Qg_1(x)=0$ for $x\in (1,\infty)$, and $Q_\perp g_1(x)=Q_\perp g(x)$ for $x\in(1,\infty)$. Since $JQf_0'=-PQg$ on $(1,\infty)$ and $PJP=0$, $PQf_0(x)=PQf_0(1)$ for all $x>1$. So, since $\int_1^\infty (Qf_0(1))^*PQf_0(1) \, dx< \infty$, $PQf_0(1)=0$. Hence, $f_1\in L_H^2(0,\infty)$. It is obvious that $f_1$ is absolutely continuous, $Jf_1'=-Hg_1$, and $\alpha f_1(0)=0$. Hence $(f_1,g_1) \in \mathcal S_H$. Also, since $PQf_1(x)=PQf_0(1)=0$ and $Qg_1(x)=0$ for $x>1$, $(f_1,g_1) \in \mathcal S_1$.

Consider $(f_2,g_2)=(f,g)-(f_1,g_1)$. For $x\in(0,1)$, $H(x)f_2(x)=H(x)(f(x)-f_0(x))=0$ almost everywhere. For $x>1$, $PQf_2(x)=PQ(f(x)-f_0(1))=0$ almost everywhere since $PQf_0(x)=PQf_0(1)$ for all $x>1$. Also, since $Q_\perp f_1(x)=Q_\perp f_0(x)$ for $x\in(1,\infty)$, $H_\infty(x)Q_\perp f_2(x)=H_\infty(x)Q_\perp(f(x)-f_0(x))=0$ for almost every $x>1$. Hence, $f_2=0\in L_H^2(0,\infty)$. Since $g_1(x)=g(x)$ for $x\in (0,1)$ and $Q_\perp g_1(x)=Q_\perp g(x)$ for $x\in(1,\infty)$, $g_2\big |_{(0,1)}=0$ and $Q_\perp g_2 \big |_{(1,\infty)}=0$. So, $(f_2,g_2)\in \mathcal S_2$.

By construction, $\langle (f_1,g_1), (0,g_2)\rangle=$ for any $(f_1,g_1) \in \mathcal S_1$ and $(0,g_2) \in \mathcal S_2$. So, since $\mathcal S_H=\mathcal S_1+ \mathcal S_2$ by the above, the claim follows.
\end{proof}

\begin{Proposition}
The deficiency indices of $H$ are both $2k$. In particular, $\mathcal S_H$ is self-adjoint. 
\end{Proposition}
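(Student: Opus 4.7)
The plan is to apply Theorem \ref{lpcriterion}: once $H$ is known to be definite on $(0,\infty)$, it suffices to verify that $\lim_{x\to\infty} f_0(x)^* J h_0(x) = 0$ for every $(f,g), (h,k) \in \mathcal T_H$. Definiteness of $H$ on $(0,\infty)$ follows from the definiteness of each $H_i^{(2)}$ on the half-line pieces (assumed without loss of generality at the start of this section) together with the usual definiteness of the building blocks on $(0,1)$. For $x > 1$, the permutation $D$ is engineered so that $DHD^*$ decouples as $P \oplus H_\infty$ and, under the same block structure, $DJD^* = J_1 \oplus J_2$, where $J_1, J_2$ are the standard symplectic forms of dimensions $2(k+\tilde k)$ and $2(k-\tilde k)$. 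Writing $Df_0 = (w_f, v_f)$ and $Dh_0 = (w_h, v_h)$, the boundary term on $(1,\infty)$ splits as $w_f^* J_1 w_h + v_f^* J_2 v_h$, and I would handle the two summands separately.

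For the $P$-block, $P = \beta^*\beta$ is a projection (using $\beta \beta^* = I$) and $P J_1 P = 0$ (using $\beta J_1 \beta^* = 0$). Since $J_1 w_f' = -P w_g$ on $(1,\infty)$, where $w_g$ is the corresponding block of $Dg$, differentiating gives $(P w_f)' = P J_1 P w_g = 0$, so $P w_f$ is constant. The condition $w_f \in L_P^2(1,\infty)$ then forces that constant to be $0$, so $w_f \in \ker \beta$ throughout $(1,\infty)$; the same holds for $w_h$. A short linear-algebra argument then shows that $\ker \beta$ is itself a Lagrangian subspace of $(\C^{2(k+\tilde k)}, J_1)$: since $\beta\beta^* = I$ gives $\ker \beta = \textrm{Range}(\beta^*)^\perp$ and $\textrm{Range}(\beta^*)$ is Lagrangian (isotropic by $\beta J_1 \beta^* = 0$ and of maximal dimension $k+\tilde k$), one deduces $J_1 \ker \beta = \textrm{Range}(\beta^*)$, and hence $w_f^* J_1 w_h = 0$ identically on $(1,\infty)$.

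For the $H_\infty$-block, each $H_i^{(2)}$ with $i > \tilde k$ is a real, order-two, definite canonical system on $(0,\infty)$ with $0$ regular and $H_i^{(2)} \notin L^1(0,\infty)$. Its deficiency indices are both equal to $1$: otherwise they would both equal $2$, which by \cite[Theorem 5.14]{Lesch} would force $H_i^{(2)} \in L^1(0,\infty)$, contradicting the assumption. Applying Theorem \ref{lpcriterion} edge by edge and summing over $i$ then gives $\lim_{x\to\infty} v_f^* J_2 v_h = 0$. Combined with the $P$-block conclusion, $\lim_{x\to\infty} f_0(x)^* J h_0(x) = 0$, so Theorem \ref{lpcriterion} yields that both deficiency indices of $H$ equal $n = 2k$; self-adjointness of $\mathcal S_H$ is then immediate from \cite[Corollary 5.12]{BHSW}. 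The most delicate point I expect to have to check rigorously is the decoupling $DJD^* = J_1 \oplus J_2$, since it rests on carefully matching the index conventions built into the definitions of $C_{4k}$ and $D$ against the placements of the half-line blocks; this combinatorial bookkeeping is the main obstacle, but it is forced by the way $D$ was defined.
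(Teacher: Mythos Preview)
Your proposal is correct and takes a genuinely different route from the paper. The paper argues by directly counting $L^2_H$ solutions of $Ju'=\pm iHu$: it introduces an auxiliary system $H_0$ (the bounded blocks glued to the projection $P$ on $(1,\infty)$), shows that $L^2_{H_0}$ solutions are exactly those satisfying $\beta u(1)=0$, which yields $k+\tilde k$ of them, and then obtains the remaining $k-\tilde k$ from the half-line pieces $H_i^{(2)}$, $i>\tilde k$, each of which contributes one $L^2$ solution by the limit-point assumption. You instead go through Theorem~\ref{lpcriterion}, establishing the vanishing of the boundary form at $\infty$ directly. Your $P$-block step --- that $\ker\beta$ is Lagrangian for $J_1$, so $w_f^*J_1w_h\equiv 0$ once $Pw_f=Pw_h=0$ --- is a clean structural observation; the paper uses the same ingredients $PJ_1P=0$ and ``$L^2$ forces $Pw=0$'' but packages them as the boundary condition $\beta u(1)=0$ rather than as a Lagrangian-subspace statement. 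One point to tighten in the $H_\infty$ block: Theorem~\ref{lpcriterion} is stated for pairs in the maximal relation on the full half line $(0,\infty)$, whereas your $(v_f^{(i)},v_g^{(i)})$ are a priori only extracted on $(1,\infty)$; you should either remark that the limit-point-at-$\infty$ conclusion depends only on the germ at $\infty$, or extend trivially to $(0,\infty)$ (e.g.\ set $v_g^{(i)}=0$ on $(0,1)$ and solve backward for $v_f^{(i)}$). Both approaches rest equally on the definiteness of $H$ and on the symplectic decoupling $DJD^*=J_1\oplus J_2$, which, as you say, is the combinatorial bookkeeping forced by the design of $D$.
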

\begin{proof}
Since $H$ is definite, due to the assumption that $H_i$ is definite on $(0,\infty)$ for $i>\tilde k$, it suffices to show that there are (exactly) $2k$ linearly independent solutions of $Ju'=-iHu$ in $L_H^2(0,\infty)$, and similarly for $Ju'=iHu$ \cite[Proposition 2.19]{Lesch}. Since the following argument works equally well for $Ju'=iHu$, just consider $Ju'=-iHu$. Let
\[
H_0(x)= \begin{cases} C_{2(k+\tilde k)}\left( \left( \bigoplus_{i=1}^k H_i^{(1)}(x) \right) \oplus \left( \bigoplus_{i=1}^{\tilde k} H_i^{(2)}(x) \right) \right) {C^*_{2(k+\tilde k)}} & x<1 \\ P & x>1 \end{cases} .
\]
So, it suffices to show that there are $k+\tilde k$ linearly independent solutions of $Ju'=-iH_0u$ in $L_{H_0}^2(0,\infty)$ and $k-\tilde k$ linearly independent solutions of $Ju'=-iH_\infty u$ in $L_{H_\infty}^2(0,\infty)$. Note that the linear independence being checked is that of functions and this is sufficient since $H$ is definite. 

Suppose $Ju'=-iH_0u$ and $u\in L_{H_0}^2(0,\infty)$. Then $Pu'=iPJPu=0$ on $(1,\infty)$. So, $Pu(x)=Pu(1)$ for $x>1$. Since $u\in L_{H_0}^2(0,\infty)$, this implies that $\beta u(1)=0$. Conversely, if $Ju'=-iH_0u$ on $(0,1)$ and $\beta u(1)=0$, then $u$ extends to a solution in $L_{H_0}^2(0,\infty)$. Now, since $H_0$ is integrable on $(0,1)$, there are $2(k+\tilde k)$ linearly independent solutions of $Ju'=-iH_0u$ in $L_{H_0}^2(0,1)$. Hence, there are $k+\tilde k$ linearly independent solutions of $Ju'=-iH_0u$ in $L_{H_0}^2(0,1)$ that satisfy $\beta u(1)=0$. So, there are there are $k+\tilde k$ linearly independent solutions of $Ju'=-iH_0u$ in $L_{H_0}^2(0,\infty)$.

Let $\tilde k<i\le k$, Since $H_i$ is definite on $(0,\infty)$ by assumption, there is a unique nontrivial solution of $Ju'=-iH_iu$ in $L_{H_i}^2(0,\infty)$. Hence, there are $k-\tilde k$ linearly independent solutions of $Ju'=-iH_\infty u$ in $L_{H_\infty}^2(0,\infty)$.  

Note that $0$ is a regular endpoint of $H$, $H$ is definite on $(0,\infty)$, $H(x)\in \C^{4k\times 4k}$, and both deficiency indices are $2k$. So, by the references cited in Section 2, $\mathcal S_H$ is self-adjoint.
\end{proof}

We now set up a map $V: \overline{D(\mathcal S_G)} \to \overline{D(\mathcal S_H)}$ that provides a unitary equivalence between the operators parts of $\mathcal S_H$ and $\mathcal S_G$, the relation for the problem on the graph introduced in the third paragraph of this section. For $f\in \bigoplus_{i=1}^{k} L_{H_i}^2$, let $\left(\left(f_{(i)}^{(1)}\right)_{i=1}^k,\, \left(f_{(i)}^{(2)}\right)_{i=1}^k\right)$ denote $\tilde Nf$, as in \eqref{tildeNlp}. Define a map $V_0$ on $\bigoplus_{i=1}^{k} L_{H_i}^2$ by 
{%
\small
\begin{equation}
\label{V0}
(V_0f)(x)=\begin{cases} C_{4k}\left(f_{(1)}^{(1)}(x)^t,\, \dots,\, f_{(k)}^{(1)}(x)^t,\, f_{(1)}^{(2)}(x)^t,\, \dots,\, f_{(k)}^{(2)}(x)^t\right)^t  & x<1 \\ 
C_{4k}\left(0,\, \dots,\, 0,\, f_{(\tilde k+1)}^{(2)}(x)^t,\, \dots,\, f_{(k)}^{(2)}(x)^t\right)^t & x \ge 1 \end{cases} ,
\end{equation}
}%
which is a well-defined isometry onto some subspace of $L_H^2(0,\infty)$; it is well-defined in the sense that the formula on the right above determines the same element of $L_H^2(0,\infty)$ for every representative of $f$ in $\bigoplus_{i=1}^{k} L_{H_i}^2$. Take any $(f,g)\in \mathcal S_G$ and absolutely continuous representatives $f_{(i)0}^{(j)}$ of $f_{(i)}^{(j)}$ that satisfy the interface conditions on $G$ and the equations $Jf_{(i)0}^{(j)'}=-H_i^{(j)}g_{(i)}^{(j)}$. Define $\tilde f$ by
\[
\tilde f(x)=C_{4k}\left(f_{(1)0}^{(1)}(x)^t,\, \dots,\, f_{(k)0}^{(1)}(x)^t,\, f_{(1)0}^{(2)}(x)^t,\, \dots,\, f_{(k)0}^{(2)}(x)^t\right)^t 
\]
for $x\in (0,1)$ and
\[
\tilde f(x)=C_{4k}\left(f_{(1)0}^{(1)}(1)^t,\, \dots,\, f_{(\tilde k)0}^{(2)}(1)^t,\, f_{(\tilde k +1)0}^{(2)}(x)^t,\, \dots,\, f_{(k)}^{(2)}(x)^t\right)^t 
\]
for $x\ge 1$. Since the functions $f_{(i)0}^{(j)}$ satisfy the interface conditions, $PQ\tilde f(x)=0$ for all $x\ge 1$. Hence, $\tilde f$ is a representative of $V_0f$. Also, $\tilde f$ is absolutely continuous, $J\tilde f'=-HV_0g$, and $\alpha \tilde f(0)=0$. Now, by Proposition \ref{singhalfline2}, every $\tilde f \in D(\mathcal S_H)$ is equal to $V_0f$ in $L_H^2(0,\infty)$ for some $f\in D(\mathcal S_G)$. So, $V_0$ restricts to a unitary map $V:\overline{D(\mathcal S_G)} \to \overline{D(\mathcal S_H)}$. In particular, for $(f,g)\in \mathcal S_G$ with $g\in \overline{D(\mathcal S_G)}$, $Vg=S_H\tilde f=S_HVf$ since $S_H$ selects the image in $\overline{D(\mathcal S_H)}$. 

Next, we want to show that $\mathcal S_G(0)=\overline{D(\mathcal S_G)}^\perp$. Note that $\mathcal S_H(0)=\overline{D(\mathcal S_H)}^\perp$ since $\mathcal S_H$ is self-adjoint, but we do not yet know whether $\mathcal S_G$ is self-adjoint. Suppose $(f,0)=(0,g)\in \mathcal S_G$, and define $\tilde f$ as above. So, $(\tilde f, V_0g)\in \mathcal S_H$. Since $V_0$ is an isometry, $\tilde f=0 \in L_H^2(0,\infty)$. Thus, $V_0g\in \overline{D(\mathcal S_H)}^\perp$. Since $V_0$ is an isometry and $V(\overline{D(\mathcal S_G)})=\overline{D(\mathcal S_H)}$, it follows that $g\in \overline{D(\mathcal S_G)}^\perp$. So, $\mathcal S_G(0) \subset \overline{D(\mathcal S_G)}^\perp$.

Let $g\in \overline{D(\mathcal S_G)}^\perp$. Then $V_0g\in \overline{D(\mathcal S_H)}^\perp=\mathcal S_H(0)$. So, there exists an absolutely continuous $\tilde f$ such that $\tilde f=0\in L_H^2(0,\infty)$, $J\tilde f'=-HV_0g$, and $\alpha \tilde f(0)=0$. Now, 
\[
\tilde f(x)=C_{4k}\left(f_{(1)}^{(1)}(x)^t,\, \dots,\, f_{(k)}^{(1)}(x)^t,\, f_{(1)}^{(2)}(x)^t,\, \dots,\, f_{(k)}^{(2)}(x)^t\right)^t
\]
for some functions $f_{(i)}^{(j)}$, and by setting 
\[
f=\tilde N^{-1}\left(\left(f_{(i)}^{(1)}\big |_{(0,1)}\right)_{i=1}^k,\, \left(f_{(i)}^{(2)}\big |_{(0,1)}\right)_{i=1}^{\tilde k}, \, \left(f_{(i)}^{(2)}\right)_{i=\tilde k+1}^k \right) ,
\]
one obtains $(f,g) \in \mathcal S_G$ such that $V_0f=\tilde f$ in $L_H^2(0,\infty)$. Since $\tilde f=0\in L_H^2(0,\infty)$ and $V_0$ is an isometry, $f=0\in \bigoplus_{i=1}^{k} L_{H_i}^2$. Hence, $g\in \mathcal S_G(0)$. Thus, $\overline{D(\mathcal S_G)}^\perp \subset \mathcal S_G(0)$. This completes the proof of the following theorem.

\begin{Theorem}
\label{GtoCS}
The isometry $V_0$ given by \eqref{V0} restricts to a unitary map $V: \overline{D(\mathcal S_G)} \to \overline{D(\mathcal S_H)}$. Let $S_G=\mathcal S_G\cap {\overline{D(\mathcal S_G)}}\oplus\overline{D(\mathcal S_G)}$. Then $VS_GV^*=S_H$. Moreover, $\mathcal S_G(0)= \overline{D(\mathcal S_G)}^\perp$ and $V_0 \mathcal S_G(0) \subset \mathcal S_H(0)$.
\end{Theorem}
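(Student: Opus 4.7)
The plan is to reduce the statement to Proposition \ref{singhalfline2}, which has already identified how the graph interface conditions reappear as the singular half-line piece of the higher-order system $H$; the argument then proceeds in three stages: transport graph elements into $\mathcal S_H$, invert the transport on domains, and characterize the multivalued parts. For stage one, given $(f,g)\in\mathcal S_G$, I would fix the absolutely continuous representatives $f_{(i)0}^{(j)}$ of the components provided by $\mathcal T_G$ and concatenate them as prescribed by $C_{4k}$ into an absolutely continuous $\tilde f$ on $(0,\infty)$, extending constantly beyond $x=1$ on the finite-edge slots and using the actual $f_{(i)0}^{(2)}$ on the half-line slots. By construction $\tilde f$ represents $V_0 f$, satisfies $J\tilde f'=-HV_0g$, and the midpoint matching $Nf_{(i)0}^{(1)}(0)=f_{(i)0}^{(2)}(0)$ is exactly $\alpha \tilde f(0)=0$; the interface conditions at the original vertices translate, via the definitions of $\beta$ and $H$ on $(1,\infty)$, into $PQ\tilde f(x)=0$ for $x\ge 1$ together with $QV_0g\big|_{(1,\infty)}=0$ in $L_P^2(1,\infty)$, so $(\tilde f,V_0 g)\in\mathcal S_1\subset\mathcal S_H$.

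For stage two, Proposition \ref{singhalfline2} gives $\mathcal S_H=\mathcal S_1\oplus\mathcal S_2$ with $\mathcal S_2$ purely multivalued, so $D(\mathcal S_H)=D(\mathcal S_1)$. Going the other way, to any $(\tilde f,\tilde g)\in\mathcal S_1$ I would associate an $f\in\bigoplus_i L_{H_i}^2$ by undoing $C_{4k}$ and $\tilde N$ and restricting the finite-edge components to $(0,1)$; the constancy of $PQ\tilde f$ beyond $x=1$, combined with the definition of $\beta$, implies the resulting $f$ satisfies every vertex interface condition, so $(f,g)\in\mathcal S_G$ with $V_0f=\tilde f$. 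Since $V_0$ is already an isometry by its defining formula, this bijection of domains extends to the unitary $V\colon\overline{D(\mathcal S_G)}\to\overline{D(\mathcal S_H)}$. The identity $VS_GV^*=S_H$ then follows directly because for $(f,g)\in\mathcal S_G$ with $g\in\overline{D(\mathcal S_G)}$, the forward construction produces $(\tilde f,V_0 g)\in\mathcal S_H$ with $V_0 g\in\overline{D(\mathcal S_H)}$, giving $S_HVf=Vg$.

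For stage three, the inclusion $V_0\mathcal S_G(0)\subset\mathcal S_H(0)$ and one direction of the orthogonality claim are immediate: if $(0,g)\in\mathcal S_G$, the forward construction produces $\tilde f$ representing $0$, which by isometry of $V_0$ must equal $0$ in $L_H^2(0,\infty)$; thus $V_0g\in\mathcal S_H(0)=\overline{D(\mathcal S_H)}^\perp$, so $g\in\overline{D(\mathcal S_G)}^\perp$. The reverse inclusion $\overline{D(\mathcal S_G)}^\perp\subset\mathcal S_G(0)$ is the step I expect to be the main obstacle: given $g\in\overline{D(\mathcal S_G)}^\perp$, isometry places $V_0 g\in\overline{D(\mathcal S_H)}^\perp=\mathcal S_H(0)$, so some absolutely continuous $\tilde f$ with $\tilde f=0$ in $L_H^2(0,\infty)$, $J\tilde f'=-HV_0 g$, and $\alpha\tilde f(0)=0$ exists. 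Inverting the stage-two construction then recovers $(f,g)\in\mathcal S_G$ with $V_0 f=\tilde f$, and isometry forces $f=0$ in $\bigoplus_i L_{H_i}^2$, so $g\in\mathcal S_G(0)$. The delicate point is to confirm that the components read off from $\tilde f$ indeed satisfy the graph interface conditions, but this holds because $\tilde f$ represents an element of $D(\mathcal S_H)=D(\mathcal S_1)$, whose characterization ensures $PQ\tilde f\equiv 0$ beyond $x=1$, which is exactly the vertex matching on $G$.
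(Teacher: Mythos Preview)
Your proposal is correct and follows essentially the same route as the paper: construct the absolutely continuous lift $\tilde f$ from graph data to land in $\mathcal S_1\subset\mathcal S_H$, use Proposition~\ref{singhalfline2} to see that $D(\mathcal S_H)=D(\mathcal S_1)$ and invert the construction, then handle the multivalued parts by the same isometry-plus-surjectivity argument. Your treatment of the ``delicate point'' in the reverse inclusion---checking the interface conditions for the $f$ read off from $\tilde f$---is actually slightly more explicit than the paper's, which simply asserts $(f,g)\in\mathcal S_G$ at that step.
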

\begin{remark}
It follows that $\mathcal S_G$ is a self-adjoint relation and $S_G$ is a self-adjoint operator. Since $\mathcal S_G(0)=\overline{D(\mathcal S_G)}^\perp$ is the kernel of both $(\mathcal S_G -z)^{-1}$ and any spectral representation of $\mathcal S_G$, the resolvent of $\mathcal S_G$ and a spectral representation for $\mathcal S_G$ can be obtained by combining Theorems \ref{GtoCS}, \ref{lpres}, and \ref{lprep}. Note that $H$ is definite on $(0,\infty)$ due the assumption that $H_i$ is definite on $(0,\infty)$ for $\tilde k<i\le k$.
\end{remark}

\section{Schr\"odinger operators on graphs}
A Schr\"odinger operator on a graph is defined in much the same way as a canonical system on a graph. The graph $G$ is assumed here to be connected, have finitely many edges and vertices, and have at most one edge between any two vertices. Some edges, corresponding to leads, may have only one vertex. For all other edges, again assume that they connect distinct vertices. Label the edges $E_1,\, \dots,\, E_k$, with $E_1, \, \dots, \, E_{\tilde k}$ being the edges with two vertices. With each edge $E_i$, $1\le i \le \tilde k$, associate a differential equation $-y''(x)+V_i(x)y(x)=zy(x)$ with $x\in (-r_i,r_i)$, $V_i(x)\in\R$, and $V_i \in L^1(-r_i,r_i)$. For each edge $E_i$, $\tilde k < i \le k$, if any, associate a differential equation $-y''(x)+V_i(x)y(x)=zy(x)$ with $x\in (-1,\infty)$, $V_i(x)\in\R$, $V_i$ integrable near $-1$, and $V_i\in L_{\textrm{loc}}^1(-1,\infty)$. For convenience, assume that the equations on the half lines are in the limit point case at infinity.
 
Suppose the graph is directed; for any vertex $v$ connected to a half line, $v$ is defined to be the initial vertex of that edge. Introduce interface conditions at the vertices in the following way. Take any vertex $v$. Define $E_o(v)= \{ i : v \textrm{ \rm{is the initial vertex of} }E_i \}$ and $E_t(v)= \{ i : v \textrm{ \rm{is the terminal vertex of} }E_i \}$. Label the numbers in $E_o(v)$ as $i_1<\cdots<i_L$ , $i_p=i_p(v), \, L=L(v)$, and the numbers in $E_t(v)$ as $j_1<\cdots<j_M$, $j_p=j_p(v),\, M=M(v)$. Let $\beta=\beta(v)\in \C^{(L+M)\times 2(L+M)}$ be such that $\beta \beta^{*}=I$ and $\beta J \beta^{*}=0$. Write $\beta=(\beta_1 \: \beta_2)$ with $\beta_i \in \C^{(L+M)\times (L+M)}$. For convenience, define $r_i=1$ for $i>\tilde k$. An interface condition at $v$ is imposed by requiring that 
\[
\beta_1 \begin{pmatrix} y_{i_1}'(-r_{i_1}) \\ \vdots \\ y_{i_L}'(-r_{i_L}) \\ -y_{j_1}'(r_{j_1}) \\ \vdots \\ -y_{j_M}'(r_{j_M}) \end{pmatrix} +
\beta_2 \begin{pmatrix} y_{i_1}(-r_{i_1}) \\ \vdots \\ y_{i_L}(-r_{i_L}) \\ y_{j_1}(r_{j_1}) \\ \vdots \\ y_{j_M}(r_{j_M}) \end{pmatrix}= 0
\]
for solutions $y_{i_p}$ and $y_{j_q}$ of $-y''+V_{i_p}y=zy$ and $-y''+V_{j_q}y=zy$, respectively. 

The domain $D(S)$ of the Schr\"odinger operator on $G$ is defined as the set of all $f \in \left(\bigoplus_{i=1}^{\tilde k} L^2(-r_i,r_i)\right) \oplus \left(\bigoplus_{i=\tilde k+1}^{k} L^2(-1,\infty)\right)$ such that $f_i$ and $f_i'$ are absolutely continuous, $-f_{i}''+V_if_i$ is square integrable, and
\begin{equation}
\label{Schric}
\beta_1 \begin{pmatrix} f_{i_1}'(-r_{i_1}) \\ \vdots \\ f_{i_L}'(-r_{i_L}) \\ -f_{j_1}'(r_{j_1}) \\ \vdots \\ -f_{j_M}'(r_{j_M}) \end{pmatrix} +
\beta_2 \begin{pmatrix} f_{i_1}(-r_{i_1}) \\ \vdots \\ f_{i_L}(-r_{i_L}) \\ f_{j_1}(r_{j_1}) \\ \vdots \\ f_{j_M}(r_{j_M}) \end{pmatrix}= 0 
\end{equation}
at every vertex. For $f\in D(S)$, define $Sf=\left(-f_{i}''+V_if_i \right)_{i=1}^{k}$. Then $S$ is a self-adjoint operator in $\left(\bigoplus_{i=1}^{\tilde k} L^2(-r_i,r_i)\right) \oplus \left(\bigoplus_{i=\tilde k+1}^{k} L^2(-1,\infty)\right)$. This is well-known, but it also follows from Theorem \ref{GSchrtoGCS} below.

The Schr\"odinger operator $S$ on $G$ is unitarily equivalent to a canonical system on $G$. The transformation of the equations along the edges is well-known. The only problem is to figure out what happens with the interface conditions and to prove everything in terms of the operators.

To see what happens to the equations inside the edges, let $T_i=\begin{pmatrix} p_i' & q_i' \\ p_i & q_i \end{pmatrix}$, where $p_i$ and $q_i$ are the solutions of $-y''+V_iy=0$ such that $p_i'(-r_i)=1=q_i(-r_i)$ and $q_i'(-r_i)=0=p_i(-r_i)$. If $-f''+V_if=g$, then $\tilde f=T_i^{-1}\begin{pmatrix} f' \\ f \end{pmatrix}$ and $\tilde g=T_i^{-1}\begin{pmatrix} 0 \\ g \end{pmatrix}$ solve $J\tilde f'=-H_i\tilde g$, where 
\begin{equation}
\label{CSH}
H_i=\begin{pmatrix} p_i^2 & p_iq_i \\ p_iq_i & q_i^2 \end{pmatrix} .
\end{equation}
If $J\tilde f'=-H_i\tilde g$, then $f=p_i\tilde f_1+q_i\tilde f_2$ and $g=p_i\tilde g_1+q_i\tilde g_2$ solve $-f''+V_if=g$. Note that $T_i \in \mathrm{SL}(2,\mathbb{R})$. The transfer matrices $T_i$ transform the interface conditions.

\begin{Lemma}
\label{iclemma}
Let $C\in \C^{2n\times 2n}$ be the matrix defined by
\[
Ce_i=\begin{cases} e_{(i+1)/2} & i \textrm{ odd} \\ e_{n+\frac{i}{2}} & i \textrm{ even} \end{cases} ,
\]
and let $A_i \in \mathrm{SL}(2,\mathbb{R})$ for $1\le i \le n$. If $\alpha \in \C^{n\times 2n}$ has rank $n$ and $\alpha J \alpha^*=0$, then $\tilde \alpha = \alpha C\left(\bigoplus_{i=1}^n A_i\right)C^*$ has the same two properties.
\end{Lemma}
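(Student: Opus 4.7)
The plan is to reduce both claims to a single fact: the matrix $B=C\bigl(\bigoplus_{i=1}^n A_i\bigr)C^*$ preserves $J$, in the sense that $BJB^*=J$. Once this is established, the rank claim is immediate since $B$ is invertible (it is conjugate by a permutation matrix to a block diagonal of invertible $2\times 2$ blocks), so $\tilde\alpha=\alpha B$ has the same rank as $\alpha$, and the symplectic claim is
\[
\tilde\alpha J\tilde\alpha^*=\alpha B J B^*\alpha^*=\alpha J\alpha^*=0.
\]

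To verify $BJB^*=J$, I would first compute $C^*JC$ directly from the definition of $C$. The matrix $C$ is a permutation sending the $(2k{-}1)$-th standard basis vector to $e_k$ and the $(2k)$-th to $e_{n+k}$, for $1\le k\le n$. A short calculation then gives
\[
C^*JC\,e_{2k-1}=e_{2k},\qquad C^*JC\,e_{2k}=-e_{2k-1},
\]
i.e. $C^*JC=\bigoplus_{i=1}^n J_2$, with $J_2=\begin{pmatrix}0&-1\\1&0\end{pmatrix}$ the $2\times 2$ symplectic form.

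The second step uses that each $A_i\in\mathrm{SL}(2,\R)$. This is exactly the condition $A_i J_2 A_i^T=J_2$ (equivalently $\det A_i=1$), and since $A_i$ is real we also have $A_i J_2 A_i^*=J_2$. Setting $D=\bigoplus_i A_i$, this gives $D(C^*JC)D^*=\bigoplus_i A_i J_2 A_i^*=\bigoplus_i J_2=C^*JC$. Conjugating by $C$,
\[
BJB^*=CD\,C^*JC\,D^*C^*=C(C^*JC)C^*=J,
\]
which completes the argument.

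No step looks like a serious obstacle; the only slightly delicate point is keeping the permutation bookkeeping straight when computing $C^*JC$. Everything else is mechanical once one recognizes that $C$ is precisely the change of basis that turns the standard symplectic form $J$ on $\C^{2n}$ into a direct sum of $n$ copies of the $2\times 2$ symplectic form, so that the block diagonal $\mathrm{SL}(2,\R)$ action automatically preserves it.
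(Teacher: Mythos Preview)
Your proof is correct and follows essentially the same approach as the paper: both arguments hinge on the identity $C^*JC=\bigoplus_{i=1}^n J_2$ (the paper writes $J_1$ for the same $2\times 2$ matrix) and then use $A_iJ_2A_i^*=J_2$ for $A_i\in\mathrm{SL}(2,\R)$ to conclude. Your packaging via $B=C(\bigoplus A_i)C^*$ and the statement $BJB^*=J$ is just a slight rephrasing of the paper's direct computation of $\tilde\alpha J\tilde\alpha^*$.
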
 
\begin{proof}
The claim about the rank is obvious because $C$ and $\bigoplus_{i=1}^n A_i$ are invertible. Now, $\tilde\alpha J \tilde\alpha^*=\alpha C\left(\bigoplus_{i=1}^n A_i\right)C^* JC\left(\bigoplus_{i=1}^n A_i\right)^*C^*\alpha^*$, and $C^*JC=\bigoplus_{i=1}^n J_1$, where $J_1=\begin{pmatrix} 0 & -1 \\ 1 & 0 \end{pmatrix}$. A simple calculation, using the assumption $A_i \in \mathrm{SL}(2,\mathbb{R})$, shows that $\left(\bigoplus_{i=1}^n A_i\right) \left(\bigoplus_{i=1}^n J_1\right) \left(\bigoplus_{i=1}^n A_i\right)^*=\bigoplus_{i=1}^n J_1$. So, $\tilde\alpha J \tilde\alpha^*=\alpha C\left(\bigoplus_{i=1}^n J_1\right)C^*\alpha^*=\alpha J\alpha^*=0$.   
\end{proof}

Define 
\begin{equation}
\label{SchrCSic}
\tilde\alpha=\beta C\left(\bigoplus_{p=1}^L T_{i_p}(-r_{i_p}) \oplus \bigoplus_{p=1}^M T_{j_p}(r_{j_p}) \right)C^* .
\end{equation}
and write $\tilde \alpha=(\tilde \alpha_1 \: \tilde \alpha_2)$ with $\tilde \alpha_i \in \C^{(L+M)\times (L+M)}$. Then, for any collection of absolutely continuous function $(f_i)$ on the edges, $(f_i)$ satisfies the interface conditions $\beta$ \eqref{Schric} at every vertex if and only if $(\tilde f_{(i)})=\left(T_i^{-1}\begin{pmatrix} f_i' \\ f_i \end{pmatrix} \right)$ satisfies
\[
\tilde\alpha_1 \begin{pmatrix} \tilde f_{(i_1)1}(-r_{i_1}) \\ \vdots \\ \tilde f_{(i_L)1}(-r_{i_L}) \\ -\tilde f_{(j_1)1}(r_{j_1}) \\ \vdots \\ -\tilde f_{(j_M)1}(r_{j_M}) \end{pmatrix} +
\tilde\alpha_2 \begin{pmatrix} \tilde f_{(i_1)2}(-r_{i_1}) \\ \vdots \\ \tilde f_{(i_L)2}(-r_{i_L}) \\ \tilde f_{(j_1)2}(r_{j_1}) \\ \vdots \\ \tilde f_{(j_M)2}(r_{j_M}) \end{pmatrix}= 0
\]
at every vertex. Since $\tilde\alpha$ has maximal rank $n$ and $\tilde\alpha J \tilde\alpha^*=0$ by Lemma \ref{iclemma}, one can replace $\tilde \alpha$ with a matrix $\alpha$ such that $\alpha$ induces the same interface condition, $\alpha \alpha^*=I$, and $\alpha J \alpha^*=0$.

Consider the canonical systems $H_i=\begin{pmatrix} p_i^2 & p_iq_i \\ p_iq_i & q_i^2 \end{pmatrix}$ with the interface conditions $\alpha$ on $G$. Note that any $H_i$ on a half line satisfies $H_i \notin L^1(-1,\infty)$ due to the limit point assumption on $V_i$. So, the relation $\mathcal S_{HG}$ corresponding to the canonical systems $H_i$ on $G$ with the interface conditions $\alpha$ is self-adjoint.

We now define a map $U$ that will provide the unitary equivalence between the Schr\"odinger operator $S$ on the graph and the self-adjoint relation $\mathcal S_{HG}$ corresponding to the canonical systems on the graph. Let $\mathcal H_1=\left(\bigoplus_{i=1}^{\tilde k} L^2(-r_i,r_i)\right) \oplus \left(\bigoplus_{i=\tilde k+1}^{k} L^2(-1,\infty)\right)$ and $\mathcal H_2=\left(\bigoplus_{i=1}^{\tilde k} L_{H_i}^2(-r_i,r_i)\right) \oplus \left(\bigoplus_{i=\tilde k+1}^{k} L_{H_i}^2(-1,\infty)\right)$. Introduce a map $U$ on $\mathcal H_1$ by setting
\begin{equation}
\label{USchrtoCS}
U\left(f_i\right)=\left(T_i^{-1}\begin{pmatrix} 0 \\ f_i \end{pmatrix}\right) .
\end{equation}
Then $U\left(f_i\right)$ is in $\mathcal H_2$ and $U:\mathcal H_1 \to \mathcal H_2$ is an isometry since  $T_i^{-1*}H_iT_i^{-1}=\begin{pmatrix} 0 & 0 \\ 0 & 1 \end{pmatrix}$ by direct calculation and the identity $p_i'q_i-p_iq_i'=1$. 

Next, we check that $U$ is surjective. Let $(\tilde f_{(i)})\in \mathcal H_2$ be given. Consider $f_i=p_i\tilde f_{(i)1}+q_i\tilde f_{(i)2}$. Note that $|f_i(x)|^2=\tilde f_{(i)}(x)^*H_i(x)\tilde f_{(i)}(x)$. So, $(f_i) \in \mathcal H_1$. Now, $U(f_i)$ and $(\tilde f_{(i)})$ are the same Hilbert space element since 
\begin{align}
\label{UfHil}
H_iT_i^{-1}\begin{pmatrix} 0 \\ f_i \end{pmatrix} &=\begin{pmatrix} p_i^2 & p_iq_i \\ p_iq_i & q_i^2 \end{pmatrix}\begin{pmatrix} -q_i'p_i\tilde f_{(i)1}-q_i'q_i\tilde f_{(i)2} \\ p_i'p_i\tilde f_{(i)1}+p_i'q_i\tilde f_{(i)2} \end{pmatrix}\\  
&=\begin{pmatrix} p_i^2\tilde f_{(i)1}\left(p_i'q_i-p_iq_i' \right)+p_iq_i\tilde f_{(i)2}\left(p_i'q_i-p_iq_i' \right) \\ p_iq_i\tilde f_{(i)1}\left(p_i'q_i-p_iq_i' \right)+q_i^2\tilde f_{(i)2}\left(p_i'q_i-p_iq_i' \right) \end{pmatrix} \nonumber \\
&=\begin{pmatrix} p_i^2 & p_iq_i \\ p_iq_i & q_i^2 \end{pmatrix}\tilde f_{(i)} \nonumber .
\end{align}
Thus, $U$ is surjective.

To check that $US \subset \mathcal S_{HG}U$, suppose $(f_i) \in D(S)$ is given, and let $(\tilde g_{(i)})=U(Sf_i)$. Then, as observed earlier, $\tilde f_{(i)}= T_i^{-1}\begin{pmatrix} f_i' \\ f_i \end{pmatrix}$ and $\tilde g_i$ solve $J\tilde f_{(i)}'=-H_i\tilde g_{(i)}$ and the functions $(\tilde f_{(i)})$ satisfy the interface conditions $\alpha$. Since $H_iT_i^{-1}=\begin{pmatrix} 0 & p_i \\ 0 & q_i \end{pmatrix}$, $(\tilde f_{(i)})$ and $U(f_i)$ are the same Hilbert space element. So, since $\left( \left(\tilde f_{(i)}\right), U\left(Sf_i\right)\right) \in \mathcal S_{HG}$, $\left( U\left(f_i\right), U\left(Sf_i\right)\right) \in \mathcal S_{HG}$.

Finally, to verify that $\mathcal S_{HG}U \subset US$, suppose $\left( U\left(f_i\right), \left(\tilde g_{(i)}\right)\right) \in \mathcal S_{HG}$ is given. So, there exist absolutely continuous functions $\left(\tilde f_{(i)}\right)$ that satisfy the interface conditions $\alpha$ and solve $J\tilde f_{(i)}=-H_i\tilde g_{(i)}$. As noted above, $f_i=p_i\tilde f_{(i)1}+q_i\tilde f_{(i)2}$ and $g_i=p_i\tilde g_{(i)1}+q_i\tilde g_{(i)2}$ solve $-f_i''+V_if_i=g_i$. A calculation, using the identity $p_i'q_i-p_iq_i'=1$ and the equation $J\tilde f_{(i)}=-H_i\tilde g_{(i)}$, shows that $\begin{pmatrix} f_i' \\ f_i\end{pmatrix}=T_i \begin{pmatrix} \tilde f_{(i)1} \\ \tilde f_{(i)2} \end{pmatrix}$. Hence, the functions $\left( f_i\right)$ satisfy the interface conditions $\beta$. Since $|f_i(x)|^2=\tilde f_{(i)}(x)^*H_i(x)\tilde f_{(i)}(x)$ and similarly for $\left( g_i\right)$, $\left( f_i\right)$ and $\left( g_i\right)$ are in $\mathcal H_1$. So, $S\left( f_i\right)=\left( g_i\right)$. By \eqref{UfHil} (replacing $f$ with $g$), $H_iT_i^{-1}\begin{pmatrix} 0 \\ g_i \end{pmatrix}=H_i\tilde g_{(i)}$. Thus, $\left(\tilde g_{(i)}\right)=U\left( g_i\right)=US\left( f_i\right)$. This completes the proof of the following theorem.
\begin{Theorem}
\label{GSchrtoGCS}
A Schr\"odinger operator $S$ on a graph is unitarily equivalent to a canonical system $\mathcal S_{HG}$ on the same graph, $USU^*=\mathcal S_{HG}$. The canonical systems $H_i$, the interface conditions, and $U$ are given by \eqref{CSH}, \eqref{SchrCSic}, and \eqref{USchrtoCS}, respectively.
\end{Theorem}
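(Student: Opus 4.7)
The plan is to proceed edge by edge: first rewrite each Schr\"odinger equation as a canonical system via a transfer matrix, then translate the interface conditions, and finally assemble everything into a unitary equivalence of relations. On edge $E_i$, introduce the fundamental solutions $p_i, q_i$ of $-y'' + V_i y = 0$ giving the transfer matrix $T_i = \begin{pmatrix} p_i' & q_i' \\ p_i & q_i \end{pmatrix} \in \mathrm{SL}(2,\R)$, and define $H_i$ by \eqref{CSH}. A direct calculation shows that $-f_i'' + V_i f_i = g_i$ is equivalent to $J\tilde f'_{(i)} = -H_i \tilde g_{(i)}$ for $\tilde f_{(i)} = T_i^{-1}\begin{pmatrix} f_i' \\ f_i \end{pmatrix}$ and $\tilde g_{(i)} = T_i^{-1}\begin{pmatrix} 0 \\ g_i \end{pmatrix}$. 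Defining $U$ by \eqref{USchrtoCS}, I would verify the identity $T_i^{-1*} H_i T_i^{-1} = \begin{pmatrix} 0 & 0 \\ 0 & 1 \end{pmatrix}$, which uses the Wronskian $p_i' q_i - p_i q_i' = 1$; this yields $\|U(f_i)\|_{\mathcal H_2} = \|(f_i)\|_{\mathcal H_1}$. Surjectivity follows by reconstruction: for $(\tilde f_{(i)}) \in \mathcal H_2$, set $f_i = p_i \tilde f_{(i)1} + q_i \tilde f_{(i)2}$; then $|f_i|^2 = \tilde f_{(i)}^* H_i \tilde f_{(i)}$ pointwise, so $(f_i) \in \mathcal H_1$, and a short matrix calculation gives $H_i T_i^{-1}\begin{pmatrix} 0 \\ f_i \end{pmatrix} = H_i \tilde f_{(i)}$, so $U(f_i) = (\tilde f_{(i)})$ in $\mathcal H_2$.

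For the interface conditions at a fixed vertex, the Schr\"odinger boundary-data vector (derivatives and values at the relevant endpoints, regrouped via the permutation $C$) is related to the canonical-system boundary-data vector by left multiplication by $C\bigl( \bigoplus_p T_{i_p}(-r_{i_p}) \oplus \bigoplus_p T_{j_p}(r_{j_p}) \bigr) C^*$. Defining $\tilde\alpha$ by \eqref{SchrCSic} and applying Lemma \ref{iclemma} (which crucially uses $T_i \in \mathrm{SL}(2,\R)$) yields that $\tilde\alpha$ has full rank and satisfies $\tilde\alpha J \tilde\alpha^* = 0$. Left-multiplying by an invertible matrix then produces $\alpha$ with $\alpha\alpha^* = I$ defining the same subspace, so the interface condition passes cleanly from $\beta$ to $\alpha$.

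Finally, the limit-point assumption on each $V_i$ for $i > \tilde k$ translates, by the standard order-two theory \cite{Rembook}, to $H_i \notin L^1(-1,\infty)$, so the framework of Section 6 applies and $\mathcal S_{HG}$ is self-adjoint by Theorem \ref{GtoCS}. To establish $USU^* = \mathcal S_{HG}$, I would verify both $US \subset \mathcal S_{HG}U$ and $\mathcal S_{HG}U \subset US$. In the forward direction, for $f \in D(S)$, the lifted functions $\tilde f_{(i)} = T_i^{-1}\begin{pmatrix} f_i' \\ f_i \end{pmatrix}$ are absolutely continuous representatives of $U f$ in $\mathcal H_2$, solve $J \tilde f'_{(i)} = -H_i (U(Sf))_i$, and satisfy the transformed interface conditions. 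In the reverse direction, given $(U f, \tilde g) \in \mathcal S_{HG}$ with absolutely continuous $\tilde f_{(i)}$ realizing $Uf$ and solving the canonical system, the reconstruction $g_i = p_i \tilde g_{(i)1} + q_i \tilde g_{(i)2}$ together with the identity $T_i \tilde f_{(i)} = \begin{pmatrix} f_i' \\ f_i \end{pmatrix}$, derived from $J \tilde f'_{(i)} = -H_i \tilde g_{(i)}$ and $p_i' q_i - p_i q_i' = 1$, recovers $f \in D(S)$ with $Sf = g$. The main obstacle is the interface-condition bookkeeping, particularly tracking the interleaving of first and second components across edges, but Lemma \ref{iclemma} isolates the core $\mathrm{SL}(2,\R)$-invariance computation that makes this transition clean.
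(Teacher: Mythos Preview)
Your proposal is correct and follows essentially the same approach as the paper's proof: the edge-wise transfer-matrix rewriting, the isometry and surjectivity of $U$ via the Wronskian identity and the computation $H_i T_i^{-1}\begin{pmatrix} 0 \\ f_i \end{pmatrix} = H_i \tilde f_{(i)}$, the interface-condition translation through Lemma~\ref{iclemma}, and the verification of both inclusions $US \subset \mathcal S_{HG}U$ and $\mathcal S_{HG}U \subset US$ all mirror the paper's argument step by step.
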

\begin{remark}
By combining this with the theorems in the preceding sections, one obtains the resolvent and a spectral representation of $S$.
\end{remark}
\begin{remark}
Similar transformations allow one to turn other kinds of problems on a graph, such as Dirac operators, into equivalent canonical systems on the graph. The transformation inside the edges is already known, and the interface conditions get transformed by $\mathrm{SL}(2,\mathbb{R})$ matrices as in Lemma \ref{iclemma} and \eqref{SchrCSic}.
\end{remark} 

\subsection*{Acknowledgements}
This research was supported by the European Union project CZ.02.1.01/0.0/0.0/16\textunderscore 019/0000778.


\begin{thebibliography}{100}
\bibitem{AP} S.\ Albeverio and K.\ Pankrashkin, A remark on Krein's resolvent formula and boundary conditions,
\textit{J.\ Phys.\ A }\textbf{38 }(2005), 4859--4865. 
\bibitem{ASVCdL} F.\ Andrade, A.G.M.\ Schmidt, E. Vincentini, B.K.\ Cheng, and M.G.E.\ da Luz, Green's function approach for quantum graphs,
\textit{Phys.\ Rep.\ }\textbf{647 }(2016), 1--46.
\bibitem{AK} S.\ Avdonin and P.\ Kurasov, Inverse problems for quantum trees,
\textit{Inverse Probl.\ Imaging }\textbf{2 }(2008), 1--21.  
\bibitem{BHS} J.\ Behrndt, S.\ Hassi, and H.\ de Snoo, Boundary Value Problems, Weyl Functions, and Differential Operators,
Monographs in Mathematics, 108, Birkh\"auser, Cham, 2020.
\bibitem{BHSW} J.\ Behrndt, S.\ Hassi, H.\ de Snoo, and R.\ Wietsma, Square-integrable solutions and Weyl functions for singular canonical systems,
\textit{Math.\ Nachr.\ }\textbf{284 }(2011), 1334--1384.
\bibitem{BL} J.\ Behrndt and A.\ Luger, On the number of negative eigenvalues of the Laplacian on a metric graph,
\textit{J.\ Phys.\ A }\textbf{47 }(2010), 474006. 
\bibitem{BCFK} G.\ Berkolaiko, R.\ Carlson, S.\ Fulling, and P.\ Kuchment, editors, Quantum Graphs and Their Applications,
Contemporary Mathematics, 415, American Mathematical Society, Providence, 2006.
\bibitem{BK} G.\ Berkolaiko and P.\ Kuchment, Introduction to Quantum Graphs,
Mathematical Surveys and Monographs, 186, American Mathematical Society, Providence, 2013.
\bibitem{BER} J.\ Bolte, S.\ Egger, and R.\ Rueckriemen, Heat-kernel and resolvent asymptotics for Schr\"odinger operators on metric graphs,
\textit{Appl.\ Math.\ Res.\ Express }\textbf{2015 }(2015), 129-–165.
\bibitem{BLT} B.M.\ Brown, H.\ Langer, and C.\ Tretter, Compressed resolvents and reduction of spectral problems on star graphs,
\textit{Complex Anal.\ Oper.\ Theory }\textbf{13 }(2019), 291--320.
\bibitem{BW} B.M.\ Brown and R.\ Weikard, A Borg-Levinson theorem for trees,
\textit{Proc.\ R.\ Soc.\ Lond.\ Ser.\ A Math.\ Phys.\ Eng.\ Sci.\ }\textbf{461 }(2005), 3231--3243.
\bibitem{CW1} S.\ Currie and B.\ Watson, Eigenvalue asymptotics for differential operators on graphs,
\textit{J.\ Comput.\ Appl.\ Math.\ }\textbf{182 }(2005), 13--31. 
\bibitem{CW2} S.\ Currie and B.\ Watson, $M$-matrix asymptotics for Sturm-Liouville problems on graphs,
\textit{J.\ Comput.\ Appl.\ Math.\ }\textbf{182 }(2008), 568--578. 
\bibitem{CW3} S.\ Currie and B.\ Watson, The $M$-matrix inverse problem for the Sturm-Liouville equation on graphs,
\textit{Proc.\ Roy.\ Soc.\ Edinburgh Sect.\ A }\textbf{139 }(2009), 775--796. 
\bibitem{dB} L.\ de Branges, Some Hilbert spaces of entire functions II,
\textit{Trans.\ Amer.\ Math.\ Soc.\ }\textbf{99 }(1961), 118--152.
\bibitem{dSW} H.\ de Snoo and H.\ Winkler, Canonical systems of differential equations with self-adjoint interface conditions on graphs,
\textit{Proc.\ Roy.\ Soc.\ Edinburgh Sect.\ A }\textbf{135 }(2005), 297--315.
\bibitem{EKKST} P.\ Exner, J.P.\ Keating, P. Kuchment, T.\ Sunada, and A.\ Teplyaev, editors, Analysis on Graphs and Its Applications,
Proceedings of Symposia in Pure Mathematics, 77, American Mathematical Society, Providence, 2008.
\bibitem{GT} F.\ Gesztesy and E.\  Tsekanovskii, On matrix-valued Herglotz functions,
\textit{Math.\ Nachr.\ }\textbf{218 }(2000), 61--138.
\bibitem{HS1} D.B.\ Hinton and J.K.\ Shaw, On Titchmarsh-Weyl $M(\lambda)$ functions for linear Hamiltonian systems,
\textit{J.\ Differential Equations }\textbf{40 }(1981), 316--342.
\bibitem{HS2} D.B.\ Hinton and J.K.\ Shaw, On the spectrum of a singular Hamiltonian system,
\textit{Quaestiones Math.\ }\textbf{5 }(1982), 29--81.
\bibitem{KS} V.\ Kostrykin and R.\ Schrader, Kirchhoff's rule for quantum wires,
\textit{J.\ Phys.\ A }\textbf{32 }(1999), 595--630.
\bibitem{Krall} A.\ Krall, $M(\lambda)$ theory for singular Hamiltonian systems with one singular point,
\textit{SIAM J.\ Math.\ Anal.\ }\textbf{20 }(1989), 664--700.
\bibitem{KN} P.\ Kurasov and S.\ Naboko, Gluing graphs and the spectral gap: a Titchmarsh-Weyl matrix-valued function approach,
\textit{Studia Math.\ }\textbf{255 }(2020), 303--326.
\bibitem{LSS} D.\ Lenz, C.\ Schubert, and P.\ Stollmann, Eigenfunction expansions for Schr\"odinger operators on metric graphs,
\textit{Integral Equations Operator Theory }\textbf{62 }(2008), 541-–553.
\bibitem{Lesch} M.\ Lesch and M.\ Malamud, On the deficiency indices and self-adjointness of symmetric Hamiltonian systems,
\textit{J.\ Differential Equations }\textbf{189 }(2003), 556--615.
\bibitem{Post} O.\ Post, Spectral Analysis on Graph-Like Spaces,
Lecture Notes in Mathematics, 2039, Springer, Heidelberg, 2012..
\bibitem{Rembook} C.\ Remling, Spectral Theory of Canonical Systems, de~Gruyter Studies in Mathematics, 70,
Berlin/Boston, 2018.
\bibitem{Roh} J.\ Rohleder, Recovering a quantum graph spectrum from vertex data,
\textit{J.\ Phys.\ A }\textbf{48 }(2015), 165202.
\bibitem{Sakh} L.\ Sakhnovich, Spectral Theory of Canonical Differential Systems. Method of Operator Identities,
Operator Theory: Advances and Applications, 107, Birkh\"auser Verlag, Basel, 1999.
\bibitem{SW} S.\ Simonov and H.\ Woracek, Spectral multiplicity of selfadjoint Schr\"odinger operators on star-graphs with standard interface conditions,
\textit{Integral Equations Operator Theory }\textbf{78 }(2014), 523-–575.
\bibitem{Teschl} G.\ Teschl, Mathematical Methods in Quantum Mechanics,
Graduate Studies in Mathematics, 157, American Mathematical Society, Providence, 2014.
\bibitem{WMLN} J.\ Weidmann, Spectral Theory of Ordinary Differential Operators,
Springer Lecture Notes, 1258, Springer, Berlin, 1987.
\bibitem{Yur} V.\ Yurko, Inverse problems for differential operators of variable orders on star-type graphs: general case,
\textit{Anal.\ Math.\ Phys.\ }\textbf{4 }(2014), 247-–262.
\end{thebibliography}
\end{document}